\numberwithin{equation}{section}
\newtheorem{theorem}{Theorem}[section]
\newtheorem{lemma}[theorem]{Lemma}
\newtheorem{proposition}[theorem]{Proposition}
\newtheorem{corollary}[theorem]{Corollary}
\newtheorem{remark}[theorem]{Remark}
\newcommand{\bbC}{{\ensuremath{\mathbb C}} }
\newcommand{\bbN}{{\ensuremath{\mathbb N}} }
\newcommand{\bbR}{{\ensuremath{\mathbb R}} }
\newcommand{\bbZ}{{\ensuremath{\mathbb Z}} }
\newcommand{\cA}{{\ensuremath{\mathcal A}} }
\newcommand{\cB}{{\ensuremath{\mathcal B}} }
\newcommand{\cD}{{\ensuremath{\mathcal D}} }
\newcommand{\cE}{{\ensuremath{\mathcal E}} }
\newcommand{\cF}{{\ensuremath{\mathcal F}} }
\newcommand{\cG}{{\ensuremath{\mathcal G}} }
\newcommand{\cH}{{\ensuremath{\mathcal H}} }
\newcommand{\cK}{{\ensuremath{\mathcal K}} }
\newcommand{\cM}{{\ensuremath{\mathcal M}} }
\newcommand{\cN}{{\ensuremath{\mathcal N}} }
\newcommand{\cR}{{\ensuremath{\mathcal R}} }
\newcommand{\cS}{{\ensuremath{\mathcal S}} }
\newcommand{\cU}{{\ensuremath{\mathcal U}} }
\newcommand{\cX}{{\ensuremath{\mathcal X}} }
\newcommand{\ga}{\alpha}
\newcommand{\gb}{\beta}
\newcommand{\gd}{\delta}
\newcommand{\gD}{\Delta}
\newcommand{\gep}{\varepsilon} 
\newcommand{\gt}{\theta}
\newcommand{\gk}{\kappa}
\newcommand{\gl}{\lambda}
\newcommand{\gL}{\Lambda}
\newcommand{\gs}{\sigma}
\newcommand{\gS}{\Sigma}
\newcommand{\bfD}{\mathbf{D}}
\renewcommand{\tilde}{\widetilde}          % wider `tilde'
\DeclareMathSymbol{\leqslant}{\mathalpha}{AMSa}{"36} % nicer `smaller or equal'
\DeclareMathSymbol{\geqslant}{\mathalpha}{AMSa}{"3E} % nicer `larger or equal'
\DeclareMathSymbol{\eset}{\mathalpha}{AMSb}{"3F}     % nicer `emptyset'
\newcommand{\dd}{\text{\rm d}}             % a straight d for differentials
\newcommand{\suptwo}[2]{\sup_{\substack{#1 \\ #2}}} % sup with 2 lines
\newcommand{\inftwo}[2]{\inf_{\substack{#1 \\ #2}}} % inf with 2 lines
\newcommand{\infthree}[3]{\inf_{\substack{#1 \\ #2\\ #3\\}}} % inf with 3 lines
\newcommand{\sumtwo}[2]{\sum_{\substack{#1 \\ #2}}} % sum with 2 lines
\newcommand{\R}{\mathbb{R}}
\newcommand{\N}{\mathbb{N}}
\newcommand\bP{\ensuremath{\mathrm{P}}}
\newcommand\bE{\ensuremath{\mathrm{E}}}
\renewcommand{\epsilon}{\varepsilon}
\newcommand{\card}{\mathrm{card}}
\newcommand{\cp}{\mathrm{cap}}
\newcommand{\bra}{\langle}
\newcommand{\ket}{\rangle}
\newenvironment{myenumerate}{
\renewcommand{\theenumi}{\arabic{enumi}}
\renewcommand{\labelenumi}{{\rm(\theenumi)}}
\begin{list}{\labelenumi}
{
\setlength{\itemsep}{0.4em}
\setlength{\topsep}{0.5em}
\setlength\leftmargin{2.45em}
\setlength\labelwidth{2.05em}
\setlength{\labelsep}{0.4em}
\usecounter{enumi}
}
}
{\end{list}
}
\newcommand{\beq}{\begin{equation}}
\newcommand{\eeq}{\end{equation}}
\newcommand{\ba}{\begin{aligned}}
\newcommand{\ea}{\end{aligned}}
\newcommand{\fm}{\mathfrak{m}}
\newcommand{\lra}{\leftrightarrow}
\newcommand{\bin}{\mathrm{Bin}}
\newcommand{\leb}{\mathrm{Leb}}
\newcommand{\tv}{\mathrm{tv}}
\newcommand{\loc}{\mathrm{loc}}
\begin{document}

\title[]{Uniqueness and tube property for the Swiss cheese large deviations}
\author{Dirk Erhard and Julien Poisat}

\begin{abstract}
We consider the simple random walk on the Euclidean lattice, in three dimensions and higher, conditioned to visit fewer sites than expected, when the deviation from the mean scales like the mean. The associated large deviation principle was first derived in 2001 by van den Berg, Bolthausen and den Hollander in the continuous setting, that is for the volume of a Wiener sausage, and later taken up by Phetpradap in the discrete setting. One of the key ideas in their work is to condition the range of the random walk to a certain skeleton, that is a sub-sequence of the random walk path taken along an appropriate mesoscopic scale. In this paper we prove that (i) the rate function obtained by van den Berg, Bolthausen and den Hollander has a unique minimizer over the set of probability measures modulo shifts, at least for deviations of the range well below the mean, and (ii) the empirical measure of the skeleton converges under the conditioned law, in a certain manner, to this minimizer. To this end we use an adaptation of the topology recently introduced by Mukherjee and Varadhan to compactify the space of probability measures.
\end{abstract}

\thanks{{\it Acknowledgements:} The authors would like to thank Erwin Bolthausen, Frank den Hollander, Jimmy Lamboley and Chiranjib Mukherjee for stimulating exchanges during the preparation of this manuscript. JP acknowledges the support of ANR LOCAL (ANR-22-CE40-0012) and the hospitality of UFBA (Universidade Federal da Bahia). D.E. was supported by the National Council for Scientific and Technological Development - CNPq via a Bolsa de Produtividade 303348/2022-4 and via a Universal Grant (Grant Number 406001/2021-9). D.E.	moreover acknowledges support by the Serrapilheira Institute (Grant Number Serra-R-2011-37582). D.E. moreover acknowledges the hospitality of the University of Paris Dauphine}
\keywords{Large deviations, simple random walk, occupation measure, range, Swiss cheese, compactification, tube property, variational problem, uniqueness} \subjclass[]{60F10, 60G50, 54D35, 35J62, 35A02}
 
\date{\today}

\maketitle

\tableofcontents
\section{Introduction}
\par  Let $S=(S_n)_{n\in\bbN_0}$ be a discrete-time simple random walk on $\bbZ^d$, whose increments $(S_n - S_{n-1})_{n\in \bbN}$ are independent and uniformly distributed on the $2d$ unit vectors. We assume throughout the paper that $d\ge 3$ and we denote by $\bP_x$ and $\bE_x$ the probability and expectation with respect to the simple random walk starting from $x\in \bbZ^d$. We omit the subscript when $x=0$. The {\it range} of the random walk up to time $n$ is the set of all vertices visited by the process up to time $n$, which we denote by
\beq
S_{(0,n]} = \{S_1, \ldots, S_n\},
\eeq
and its cardinality (volume of the range) is denoted by
\beq
R_n = \#\{S_1, \ldots, S_n\}.
\eeq
The almost-sure asymptotic behavior for the volume of the range is given by the following Law of Large Numbers~\cite{DE51}
\beq
\label{eq:LLN-range}
\frac{R_n}{n} \longrightarrow \gk_d := 
\bP(S_k \neq 0,\ \forall k\ge 0),
\qquad n\to \infty.
\eeq
The limit, sometimes called the {\it escape probability}, is positive as the random walk is transient in dimension $d\ge 3$. This probability coincides with the inverse of the Green function at the origin:
\beq
1/\gk_d = \sum_{n\ge 0} \bP(S_n = 0).
\eeq
In this paper we consider the law of the simple random walk conditioned on the large deviation event that $R_n \le bn$, in the limit of large $n$ and when $b< \gk_d$, that is for {\it lower} deviations {\it scaling like the mean}.
\subsection{Moderate deviations of the range of simple random walk}
\par The problem of evaluating the probability of the large deviation event above was first considered by van den Berg, Bolthausen and den Hollander~\cite{BBH2001} in the context of Wiener sausages instead of random walks. Let $B = (B_s)_{s\ge 0}$ be a standard Brownian motion on $\bbR^d$ and $a>0$. The Wiener sausage with radius $a$ and running up to time $t\ge 0$ is defined as the random subset
\beq
W^a(t) = \{x\in \bbR^d \colon \exists s\in[0,t] \colon |x - W_s| \le a\}, 
\eeq
where throughout the paper $|\cdot|$ denotes the Euclidean norm. The following almost-sure convergence~\cite{S64,WhitmanPhD64} is the analogue of~\eqref{eq:LLN-range} in the continuum:
\beq
\label{eq:def-gka}
\frac{\leb [W^a(t)]}{t} \longrightarrow \gk_a := \cp(\bar \cB(0,a)), \qquad t\to \infty,
\eeq
where $\leb$ is Lebesgue measure, $\bar \cB(0,a)$ is the closed Euclidean ball with radius $a$ centred at the origin, and $\cp(\bar \cB(0,a))$ denotes its Newtonian capacity.
\par For any $\gk>0$ and $b>0$, define 
\beq
\label{def:Ifunction}
I_{\gk}(b) = \inf_{\phi \in \cD_\gk(b)}
\Big[
\frac12 \int_{\bbR^d} |\nabla \phi (x)|^2 \dd x
\Big],
\eeq
where
\beq
\cD_\gk(b) = 
\Big\{
\phi \in H^1(\bbR^d) \colon
\int_{\bbR^d} \phi^2(x)\dd x = 1,
\int_{\bbR^d} (1-e^{-\gk \phi^2(x)})\dd x \le b
\Big\}.
\eeq
The main result in~\cite{BBH2001} reads:
\begin{theorem}[Van den Berg, Bolthausen and den Hollander : moderate deviations for the volume of the Wiener sausage]For all $b>0$,
\label{thm:BBH2001}
\beq
\lim_{t\to\infty} t^{\frac{2}{d}-1}\log \bP(\leb[W^a(t)] \le bt) = -I_{\gk_a}(b),
\eeq
where $\gk_a$ is chosen as in~\eqref{eq:def-gka}. 
\end{theorem}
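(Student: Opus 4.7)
\textbf{Proof plan for Theorem~\ref{thm:BBH2001}.}

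The plan is to run a two-sided large deviation analysis in Brownian-rescaled coordinates. I would set $\tilde B_s := t^{-1/d} B_{s t^{2/d}}$, so that $(\tilde B_s)$ is again a standard Brownian motion and the event $\{\leb[W^a(t)]\le bt\}$ becomes, after rescaling, $\{\leb[\tilde W(T_t)]\le b\}$ where $T_t := t^{1-2/d}$ and $\tilde W(T_t)$ is the $(a t^{-1/d})$-neighbourhood of $(\tilde B_s)_{s\le T_t}$. The speed $T_t$ matches exactly the speed of the Donsker--Varadhan large deviation principle for the Brownian occupation measure, which is the natural input. The guiding heuristic is the \emph{Swiss cheese} picture: a shrinking sausage of radius $a t^{-1/d}$ around $\tilde B$ restricted to a region where the normalised occupation density of $\tilde B$ equals $\phi^2(x)$ fills a fraction $1 - e^{-\gk_a \phi^2(x)}$ of the region, so that
\[
\leb[\tilde W(T_t)] \;\approx\; \int_{\bbR^d} \bigl(1 - e^{-\gk_a \phi^2(x)}\bigr)\, \dd x.
\]

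For the \textbf{upper bound}, I would partition $\bbR^d$ at a mesoscopic scale $\gep$ and smooth the rescaled occupation measure into a density $\phi_\gep^2 \in H^1(\bbR^d)$ with $\int \phi_\gep^2 = 1$. A skeleton argument---tracking $\tilde B$ only at times in a mesoscopic grid---combined with capacity estimates for Brownian bridges, should give
\[
\leb[W^a(t)]/t \;\ge\; \int_{\bbR^d} \bigl(1 - e^{-\gk_a \phi_\gep^2(x)}\bigr)\, \dd x + o(1)
\]
with probability tending to one, forcing $\phi_\gep$ to lie approximately in $\cD_{\gk_a}(b)$ on the large deviation event. The Donsker--Varadhan principle then bounds the probability that the smoothed density is close to a prescribed $\phi^2$ by $\exp(-\tfrac12 T_t \int |\nabla \phi|^2 + o(T_t))$. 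Optimising over admissible $\phi$ and sending $\gep\downarrow 0$ yields the bound $\le -I_{\gk_a}(b)$.

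For the \textbf{lower bound}, I would fix a smooth, compactly supported near-minimiser $\phi \in \cD_{\gk_a}(b)$ of the variational problem and construct a tilted Brownian motion whose rescaled occupation measure approximates $\phi^2$---either via a Girsanov drift proportional to $\nabla \log \phi$, or by forcing the path into a deterministic tube prescribed by $\phi$. The associated entropy cost is $\tfrac12 T_t \int |\nabla \phi|^2\, \dd x \,(1+o(1))$, and under the tilted law the Swiss cheese heuristic yields $\leb[W^a(t)]/t \le b + o(1)$ with high probability, producing the matching lower bound.

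The hard part will be making the Swiss cheese heuristic quantitative and uniform across mesoscopic boxes: one must prove that the volume of the sausage of vanishing radius $a t^{-1/d}$ around a Brownian path with prescribed local occupation density $\rho$ concentrates sharply on $(1 - e^{-\gk_a \rho})$ times the box volume, with a control tight enough to survive summation over boxes and averaging over the skeleton. This rests on sharp capacity estimates for Brownian bridges and a de-Poissonisation argument handling correlations across the skeleton. A secondary difficulty, foreshadowing the present paper, is the absence of compactness: the variational problem on $H^1(\bbR^d)$ is translation invariant, so one must separately rule out escape of mass to infinity in the upper bound---exactly the issue that the Mukherjee--Varadhan compactification, adapted here, eventually resolves.
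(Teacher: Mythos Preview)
This theorem is not proved in the present paper: it is quoted verbatim from van den Berg, Bolthausen and den Hollander~\cite{BBH2001} as background (see the sentence ``The main result in~\cite{BBH2001} reads:'' immediately preceding the statement). There is therefore no proof in the paper to compare your proposal against.

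That said, your sketch is a faithful high-level outline of the strategy actually carried out in~\cite{BBH2001}: rescaling to a Brownian motion with shrinking sausage radius, a skeleton decomposition at a mesoscopic time scale, capacity estimates for Brownian bridges to produce the $1-e^{-\gk_a\phi^2}$ functional, and a Donsker--Varadhan-type cost for the occupation profile. One correction is worth flagging: the compactness issue in~\cite{BBH2001} is \emph{not} handled by the Mukherjee--Varadhan compactification (which appeared fifteen years later), but by folding the Brownian motion onto a large torus of side $Nt^{1/d}$ and then sending $N\to\infty$ after the large deviation analysis. The present paper's use of the $\bfD$-topology is precisely an alternative to that torus trick, applied not to re-prove Theorem~\ref{thm:BBH2001} but to obtain the finer tube property of Theorem~\ref{thm:2}.
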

Here, the term {\it moderate} refers to the exponent $1-2/d$ being smaller than one.
\begin{remark}
It was shown in~\cite[Theorem 3]{BBH2001} that $I_{\gk_a}(b)>0$ if and only if $b\in (0,\gk_a)$. 
%This of course is not surprising in view of~\eqref{eq:def-gka}.
\end{remark}
 This result was later adapted to the random walk setting in Phetdradap's Ph.D thesis~\cite{Phetpradap}.
\begin{theorem}[Phetdradap : deviations for the range of the simple random walk] \label{thm:MDP-rw}
For all $b>0$,
\beq\label{eq:MDP-rw}
\lim_{n\to\infty} n^{\frac{2}{d}- 1} \log \bP(R_n \le bn) = - \tfrac1d I_{\gk_d}(b),
\eeq
where $\gk_d$ is chosen as in~\eqref{eq:LLN-range}.
\end{theorem}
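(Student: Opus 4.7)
My plan is to adapt van den Berg, Bolthausen and den Hollander's proof of Theorem~\ref{thm:BBH2001} to the lattice setting, essentially following~\cite{Phetpradap}. The prefactor $1/d$ in the rate function originates from the step variance of the simple random walk being $1/d$ along each coordinate, which is the diffusion constant in Donsker's invariance principle for this walk.

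The first step is to fix a mesoscopic spatial scale $\alpha_n \to \infty$ with $\alpha_n = o(n^{1/d})$ and partition $\bbZ^d$ into disjoint boxes of side $\alpha_n$. By the local central limit theorem combined with a law of large numbers for the range within a box, the number of visited lattice sites in a box where the walk spends time $\tau$ is well approximated by $\alpha_n^d(1-e^{-\gk_d \tau/\alpha_n^d})$, where the escape probability $\gk_d$ plays the role of the Newtonian capacity in the continuum Swiss-cheese formula. After rescaling space by $n^{1/d}$ (the expected spatial extension of the walk under the conditioning), the event $\{R_n \le bn\}$ translates, up to lower order errors, into a constraint of the form $\int_{\bbR^d}(1-e^{-\gk_d \phi_n^2})\dd x \le b$, where $\phi_n^2 = \dd L_n/\dd x$ is the density of the rescaled occupation measure $L_n$ of the walk.

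The second step is to prove a large deviation principle at speed $n^{1-2/d}$ for $\phi_n$, under the normalization $\int \phi_n^2 \dd x = 1$, with rate function $\frac{1}{2d}\int|\nabla\phi|^2 \dd x$. This proceeds via a skeleton decomposition in time: conditioning on the positions of the walk at a mesoscopic time scale $t_n$, the intermediate path segments are approximately independent simple random walk bridges whose contribution to the empirical measure is Gaussian at the required exponential scale. The factor $1/d$ arises in the limit because the discrete Dirichlet form of the simple random walk, after diffusive rescaling, converges to $\frac{1}{2d}\int|\nabla\phi|^2$ rather than to the standard $\frac{1}{2}\int|\nabla\phi|^2$. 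Combining this LDP with the preceding step and invoking the contraction principle yields
\[
\lim_{n\to\infty} n^{2/d-1}\log\bP(R_n \le bn) = -\inf_{\phi \in \cD_{\gk_d}(b)} \frac{1}{2d}\int|\nabla \phi|^2 \dd x = -\tfrac{1}{d}\, I_{\gk_d}(b).
\]

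The main obstacle -- as already in~\cite{BBH2001} -- is the upper bound: one must show that, on the exponential scale $n^{1-2/d}$, the fluctuations of $R_n$ around its Swiss-cheese prediction given the skeleton are negligible. This requires a uniform exponential concentration estimate for the range of a simple random walk of length $t_n$ starting from arbitrary initial points, and on the lattice extra care is needed both to handle short-distance corrections via the local central limit theorem and to check that the entropy cost of enumerating skeleton configurations does not saturate the large-deviation rate. The lower bound is handled by a standard tilting argument along a near-optimal $\phi$: one verifies that under the tilted law the range constraint is satisfied with probability bounded away from zero while the relative entropy cost matches the upper bound.
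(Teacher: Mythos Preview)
The paper does not give its own proof of this theorem; it is quoted from Phetpradap's thesis~\cite{Phetpradap} as background. So the relevant comparison is between your sketch and the actual BBH/Phetpradap argument, of which the present paper reproduces several ingredients (Propositions~\ref{pr:conc.ineq} and~\ref{pr:app-cond-range}, equation~\eqref{eq:exp-cond-range}).

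Your outline captures the spirit --- skeleton decomposition, Swiss-cheese approximation of the conditional range, Donsker--Varadhan type LDP, contraction --- and your explanation of the $1/d$ factor is correct. But there is a structural gap: you never address compactness. Phetpradap's proof (following~\cite{BBH2001}) begins by \emph{folding the walk onto a torus} of side $Nn^{1/d}$; this is what makes the state space compact and upgrades the weak LDP for the skeleton (pair) empirical measure to a full LDP, so that the upper bound holds on closed sets and the contraction principle applies. One then takes $\gep\to 0$ followed by $N\to\infty$, checking that the rate function on the torus converges to $I_{\gk_d}(b)$. Without this step (or a substitute compactification, such as the Mukherjee--Varadhan topology used elsewhere in this paper), your second step only yields a weak LDP upper bound and the contraction argument does not go through. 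The present paper explicitly flags this: ``the only difference being that we do not fold the random walk on a torus'' --- and it compensates by working in $(\tilde\cX,\bfD)$ instead.

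A second, smaller issue: your ``first step'' description --- partitioning $\bbZ^d$ into boxes of side $\alpha_n=o(n^{1/d})$ and approximating the per-box range by $\alpha_n^d(1-e^{-\gk_d\tau/\alpha_n^d})$ --- is a heuristic, not the actual mechanism. In BBH/Phetpradap the conditional range is computed \emph{exactly} as a functional of the skeleton via~\eqref{eq:exp-cond-range}, and the Swiss-cheese form $\int(1-e^{-\gk\phi^2})$ emerges only after replacing the bridge hitting probabilities $\bar q_\ell$ by their Brownian limit $\varphi_\gep$ (Proposition~\ref{pr:app-cond-range}) and then letting $\gep\to 0$ (Lemmas~\ref{lem:Psieps-to-gamma-tilt} and~\ref{lem:Psieps-to-gamma}). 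There is no intermediate box scale $\alpha_n$; the relevant scales are the skeleton time step $\ell=\lfloor\gep n^{2/d}\rfloor$ and the spatial scale $n^{1/d}$.
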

The function that governs the large deviation is the same in both cases, up to a multiplicative constant $1/d$. The lattice structure survives in the limit through the constant $\gk_d$ (instead of $\gk_a$). It may be however pulled out from the variational formula by a simple scaling argument. In the sequel we shall write $\gk$ instead of $\gk_d$ and drop the dependence of $\cD(b)$ and $I(b)$ on $\kappa$, in order to lighten notation. This should not lead to any confusion since we only deal with the simple random walk from now on.

\par While Theorems~\ref{thm:BBH2001} and~\ref{thm:MDP-rw} settle the issue of the large deviation cost, the question about the law of the random walk {\it conditioned} on this large deviation event remains. In the continuous setting, van den Berg, Bolthausen and den Hollander~\cite{BBH2001} set forth a heuristic picture coined as the {\it Swiss cheese} strategy: the conditioned Brownian motion should behave as if pushed by a drift field towards the origin, folding itself onto scale $t^{1/d}$ instead of the typical scale $t^{1/2}$. While doing so, the Wiener sausage \emph{covers only part of the space and leaves random holes whose sizes are of order one and whose density varies on scale $t^{1/d}$}, to quote the authors. The function that drives the drift field is expected to be the minimizer of the rate function, provided there exists a unique minimizer, at least modulo spatial shifts. The uniqueness issue will be addressed in Section~\ref{sec:exi-uni-min} below. Provided existence and uniqueness, we further show that the minimizer is the limiting profile for the occupation time measure of a certain subsequence of the random walk path (later called {\it skeleton}). The limit is for a certain topology, explained in Section~\ref{sec:compact-topo0}, that disregards space shifts. This property, referred to as {\it tube property}, is exposed in Section~\ref{sec:tube-pro}. We believe that the two main results of this paper (Theorems~\ref{thm:1} and~\ref{thm:2}) are a first step towards a rigorous description of that so-called {\it Swiss cheese} picture, conjectured to be linked to the model of random interlacements~\cite{sznitman2021bulk}. To the authors knowledge, the only available results on the conditioned random walk path were obtained by Asselah and Schapira~\cite{AssSch17,Asselah:2018aa, MR4265025}.

\subsection{Existence and uniqueness of minimizers modulo shifts}
\label{sec:exi-uni-min}
We first need to recall some definitions from~\cite{BBH2001}. Let $D^1(\bbR^d)$ be the set of locally integrable functions $\psi\colon \bbR^d\mapsto \bbC$ such that $\nabla \psi \in L^2(\bbR^d)$ (in the sense of distributions) and such that for all $a>0$ the set $\{|f|>a\}$ has finite Lebesgue measure~\cite{LiebLoss}. When $d\ge 5$, let us define
\beq
\label{def:u*d}
u_d^* := 1- \Big[\inf_{\psi\in\gS^*}\|\psi\|_2^2\Big]^{-1}
\eeq
where $\|\cdot\|_2$ is the $L^2$-norm and $\gS^*$ is the set of local minimizers of $\|\nabla\psi\|_2$ among all $\psi\in D^1(\bbR^d)$ such that $\int_{\bbR^d} (e^{-\psi^2}-1+\psi^2) = 1$. It is known that $2/d \le u_d^* < 1$~\cite[Theorem 5]{BBH2001}. In~\cite[Theorems~4 and 5]{BBH2001}, the authors proved that, for all $b\in(0,\gk)$ when $d\in\{3,4\}$ or for all $b\in(0,u_d^*\gk]$ when $d\ge 5$, the variational problem in~\eqref{def:Ifunction} has a minimizer that is strictly positive, has a unique global maximum is radially symmetric (modulo shifts) and strictly decreasing in the radial component. Moreover, any other minimizer is of the same type. However, uniqueness was still open. In this paper we prove the following:

\begin{theorem}[Uniqueness of minimizers]
	\label{thm:1}
	The variational problem in~\eqref{def:Ifunction} has at most one minimizer (modulo space shifts) for almost every $b\in (0, \gk)$ when $d=3$ and almost every $b\in (0, \frac2 d \gk)$ when $d\ge 4$.
\end{theorem}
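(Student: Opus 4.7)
The plan is to combine monotonicity of the rate function with an envelope argument to pin down the Lagrange multipliers of any minimizer at every point of differentiability of $I$, and then to invoke a PDE--uniqueness theorem for the resulting Euler--Lagrange equation. The value function $b\mapsto I(b)$ is non-increasing on $(0,\gk)$, since relaxing the constraint $\int(1-e^{-\gk\phi^2})\,\dd x\le b$ only enlarges $\cD(b)$. By Lebesgue's theorem, $I$ is therefore differentiable at almost every $b$ in any subinterval of $(0,\gk)$, and it suffices to prove uniqueness at every such $b$ lying in the range stated in the theorem.

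\textbf{Step 2 (Unique Lagrange multipliers).} Introduce the partial Legendre transform
\[
K(\gl):=\inf_{\|\phi\|_2=1}\Bigl[\tfrac12\int_{\bbR^d}|\nabla\phi|^2\,\dd x+\gl\int_{\bbR^d}(1-e^{-\gk\phi^2})\,\dd x\Bigr],\qquad \gl\ge 0.
\]
As an infimum of affine functions of $\gl$, $K$ is concave, and by convex duality $I(b)=\sup_{\gl\ge 0}[K(\gl)-\gl b]$. Differentiability of $I$ at $b$ then forces the inequality-constraint multiplier to be uniquely determined as $\gl(b)=-I'(b)$, independently of which minimizer is chosen. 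A Pohozaev identity applied to any minimizer yields
\[
\int_{\bbR^d}|\nabla\phi|^2\,\dd x=\frac{d}{d-2}\bigl[\mu-2\gl(b)\,b\bigr],
\]
so the relation $\int|\nabla\phi|^2\,\dd x=2I(b)$ also pins down the $L^2$-multiplier $\mu=\mu(b)=\tfrac{2(d-2)}{d}I(b)-2bI'(b)$. Hence every minimizer at such $b$ solves the \emph{same} Euler--Lagrange equation
\[
-\Delta\phi=\mu(b)\,\phi-2\gl(b)\gk\,\phi\,e^{-\gk\phi^2}\quad\text{on }\bbR^d.
\]

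\textbf{Step 3 (Reduction to ground-state uniqueness and main obstacle).} By \cite[Theorems~4 and 5]{BBH2001}, every minimizer is, modulo a shift, positive, radial, and strictly radially decreasing, so Theorem~\ref{thm:1} reduces to uniqueness of the positive decreasing ground state $u(r)$ of the radial ODE
\[
-u''(r)-\tfrac{d-1}{r}u'(r)=\mu u-2\gl\gk\,u\,e^{-\gk u^2},\qquad u'(0)=0,\ u>0,\ u(r)\to 0,
\]
for the pinned pair $(\mu,\gl)=(\mu(b),\gl(b))$. The nonlinearity $f(u)=\mu u-2\gl\gk\,u\,e^{-\gk u^2}$ is smooth, odd, satisfies $f(0)=0$ and $f'(0)=\mu-2\gl\gk<0$ (necessary for the exponential decay of $u$), and changes sign exactly once on $(0,\infty)$. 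The \emph{main obstacle} is that $f$ is transcendental rather than a pure power, so the classical uniqueness theorems for positive radial ground states (Coffman, McLeod--Serrin, Kwong, Serrin--Tang) do not apply verbatim; one has to adapt the Kwong--Serrin--Tang scheme to the Gaussian factor $e^{-\gk u^2}$ by verifying monotonicity of an auxiliary ratio such as $uf'(u)/f(u)$ along the orbit, and by combining it with the uniqueness of $\mu$ at prescribed $L^2$-norm. I expect this verification to be precisely where the restriction $b<\tfrac{2}{d}\gk$ for $d\ge 4$ enters, paralleling the range in which van den Berg, Bolthausen and den Hollander characterize local minimizers in~\cite[Theorem~5]{BBH2001}.
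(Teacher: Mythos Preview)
Your overall strategy matches the paper's: monotonicity of $I$ gives a.e.\ differentiability; Pohozaev plus an envelope argument pin down both Lagrange multipliers at any point of differentiability; and then one invokes the Serrin--Tang uniqueness theorem for the radial ODE. Two points deserve attention, though.

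In Step~2, the identity $I(b)=\sup_{\gl\ge0}[K(\gl)-\gl b]$ (strong duality) is asserted but not justified, and it is not automatic since $\phi\mapsto\int(1-e^{-\gk\phi^2})$ is not convex. The paper avoids this by a direct perturbation: given a minimizer $\phi$, one exhibits (via a surjectivity argument) a test function $h$ with $\int h\phi=0$ and $\int h\,\gk\phi e^{-\gk\phi^2}=1$, so that the normalized curve $(\phi+\gep h)/\|\phi+\gep h\|_2$ stays on the $L^2$-sphere while pushing the constraint value to $b+2\gep+o(\gep)$; comparing $I(b+2\gep)$ with $J_1(\phi+\gep h)$ and letting $\gep\to0^\pm$ yields the multiplier directly. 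Your envelope heuristic is morally right, but it needs this or an equivalent argument to be made rigorous.

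The more serious gap is in Step~3, and it concerns \emph{where} the restriction $b<\tfrac{2}{d}\gk$ actually enters. You treat $f'(0)=\mu-2\gl\gk<0$ as a free consequence of ``exponential decay'' and then locate the difficulty in the monotonicity of $uf'(u)/f(u)$. This is backwards. In the paper, the monotonicity hypothesis (Serrin--Tang's (H2)) is a pure computation on the Gaussian nonlinearity and goes through without any restriction on $b$. The delicate point is precisely (H1): that $f$ changes sign exactly once on $(0,\infty)$, i.e.\ that $f'(0)<0$. This is \emph{not} automatic from decay; the paper derives it by combining the Pohozaev relation and the formula for the multipliers with the scaling inequalities for $b\mapsto I(b)$ established in \cite[Theorems~3--5]{BBH2001} (monotonicity of $(1-u)^{1-2/d}\chi(u)$, resp.\ $(1-u)^{-2/d}\chi(u)$, together with $u^{2/d}\chi(u)$), and it is exactly this step that forces $b<\tfrac{2}{d}\gk$ when $d\ge4$. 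For larger $b$ one cannot rule out $f'(0)\ge0$, and the Serrin--Tang scheme does not apply. So your Step~3 is incomplete not because the Kwong--Serrin--Tang ratio is hard to control here, but because the sign condition you took for granted is the actual crux.
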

\begin{remark}
If we were to know that $b\mapsto I(b)$ were differentiable, then our proof would allow us to remove the ``almost every" part from our statement. Our method to prove uniqueness fails when $b$ is close enough to $\gk$ and $d\ge 4$. It thus remains an open question to determine whether there is a unique minimizer modulo spatial shifts for all $b\in(0,\gk)$ and $d=4$. When $b$ is close enough to $\gk$ and $d\ge 5$, it is known that there is no minimizer but rather a minimizing sequence of probability measure that loses mass~\cite[Theorem 5]{BBH2001}. As we shall see below, one can embed the space of sub-probability measures modulo shifts into a larger space (its compactification with respect to a certain topology) on which we may also write a variational principle. The fundamental question then is if this new variational problem characterizes the Swiss cheese in the sense that one has equality in~\eqref{eq:MDP-rw} when one replaces the right hand side in~\eqref{eq:MDP-rw} by the new variational problem and whether there exists a unique minimizing \emph{sub}-probability measure modulo spatial shifts. 
	\end{remark}
From what precedes, we may assert that the variational problem in~\eqref{def:Ifunction} has a unique minimizer (modulo space shifts) for almost all $b\in (0, \gk)$ when $d=3$, and for (at least) almost all $b\in(0,\tfrac2 d\gk)$ when $d\ge 4$. For such values of $b$, we shall then denote by $\phi_b$ the unique minimizer centered at the origin, and by
\beq
\fm_b := \{\phi_b^2 * \gd_x \colon x\in \bbR^d\}
\eeq
the set of minimizers, where $*$ is the convolution operation, $\gd_x$ is the Dirac mass at $x\in\bbR^d$ and, with a slight abuse of notation, $\phi_b^2$ stands for the measure with density $\phi_b^2$ with respect to Lebesgue measure. 
As we shall see, $\fm_b$ is the limit of the occupation time measure of a certain {\it skeleton} of the random walk conditioned on the large deviation event. In the next section we introduce the topology under consideration.
\subsection{Compactification of the space of probability measures}
\label{sec:compact-topo0}
The empirical and pair empirical measures of many Markov chains and processes such as simple random walk on $\bbZ^d$ or Brownian motion on $\bbR^d$ only satisfy a weak large deviation principle. This is due to the lack of exponential tightness. However, the fact that the large deviation upper bound only holds for compact sets is often a big obstacle. In our context this is not different. To circumvent that problem Mukherjee and Varadhan~\cite{MV2016} introduced a new topology which takes the shift invariance of many models in statistical mechanics into account and allows to compactify the space of measures, see also \cite{BatCha20,BroMuk19,KM2017,BKM2017} for applications. In this section we summarize the construction of this topology.
\par Let $\cM_1= \cM_1(\bbR^d)$ be the space of probability measures on $\bbR^d$ and $\cM_{\le1} = \cM_{\le1}(\bbR^d)$ be the space of sub-probability measures on $\bbR^d$. We consider the action of the shifts $\gt_x$, for $x\in\bbR^d$, defined by:
\beq
\int_{\bbR^d} f(u) (\gt_x \ga)(\dd u) = \int_{\bbR^d} f(u+x)  \ga(\dd u)
\eeq
for all continuous and bounded functions $f\colon \bbR^d\mapsto \bbR$ and $\ga\in \cM_{\le1}$. We shall denote by $\tilde \cM_1$ (resp. $\tilde \cM_{\le 1}$) the space of equivalence classes of $\cM_1$ (resp. $\cM_{\le 1}$) under the action of the shifts $\gt_x$. For any $\alpha\in \cM_{\le 1}$ we denote by $\tilde\alpha$ its orbit, i.e., equivalence class. For $k\ge 2$, we define $\cF_k$ as the space of continuous functions $f\colon (\bbR^d)^k \mapsto \bbR$ that are {\it translation invariant}, i.e.
\beq
f(u_1+x, \ldots, u_k +x) = f(u_1,\ldots, u_k), \quad \forall x,u_1,\ldots, u_k \in \bbR^d,
\eeq
and {\it vanishing at infinity}, in the sense that
\beq
\lim_{\max_{i\neq j} |u_i-u_j|\to \infty} f(u_1, \ldots, u_k) = 0.
\eeq
For $k\ge 2$, $f\in \cF_k$ and $\ga \in \cM_{\le 1}$, we write
\beq
\label{eq:defLambda0}
\gL(f, \ga) := \int f(u_1,\ldots, u_{k}) \prod_{1\le i\le k} \ga(\dd u_i),
\eeq
which actually only depends on the orbit $\tilde \ga$. 
Let us define
\beq
\cF := \bigcup_{k\ge 2} \cF_k,
\eeq
for which there exists a countable dense set (under the uniform metric) denoted by
\beq
\{f_r(u_1,\ldots, u_{k_r}),\, r\in \bbN\}.
\eeq
We then define the following set of empty, finite, or countable collections of sub-probability measure orbits:
\beq
\tilde\cX := \Big\{
\xi = \{\tilde \ga_i\}_{i\in I} \colon \tilde \ga_i \in \tilde \cM_{\le 1},\ \sum_{i\in I} \ga_i(\bbR^d) \le 1\Big\}.
\eeq
For every $\xi_1, \xi_2\in \tilde \cX$, define
\beq
\mathbf{D}(\xi_1, \xi_2) := \sum_{r\ge 1} \frac{1}{2^r} \frac{1}{1+\|f_r\|_{\infty}}
\Big|%
\sum_{\tilde\ga\in\xi_1} \gL(f_r, \ga)- \sum_{\tilde\ga\in\xi_2} \gL(f_r, \ga)
\Big|.%
\eeq
It was then shown in~\cite{MV2016} that the space $\tilde\cX$ equipped with $\mathbf{D}$ is a compact metric space and that $\tilde \cM_1$ is dense in $\tilde\cX$. Moreover, the set of sub-probability measure orbits is naturally embedded into $\tilde\cX$. We refer the interested reader to~\cite{MV2016} for details. Let us however close this section with a simple (one-dimensional) example in order to better grasp the idea behind this topology. Consider a sequence of probability measure $(\mu_n)_{n\ge 1}$ defined by
\beq
\mu_n = \frac12 \cN(n,1) + \frac13 \cN(-n,2) + \frac16 \cN(0,n),
\eeq
where $\cN(m,\gs^2)$ is the normal distribution with mean $m$ and variance $\gs^2$. This sequence does not converge in the weak topology but the sequence $(\tilde \mu_n)_{n\ge 1}$ does converge in the $\bfD$-topology to the limit $\xi=\{\frac12 \cN(\cdot, 1), \frac13 \cN(\cdot, 2)\}\in\tilde\cX$, where $\cN(\cdot, \gs^2)$ denotes the normal distribution modulo space shifts. The reason behind that is that the two components $\tfrac12 \cN(n,1)$ and $ \tfrac13 \cN(-n,2)$ coincide after a shift with $\tfrac12 \cN(0,1)$ and $ \tfrac13 \cN(0,2)$, whereas the last component of $\mu_n$ simply goes to zero.

\subsection{Tube property}
\label{sec:tube-pro} 
In this section we state the second and last main theorem of this paper.
Let $\gep>0$ and $n\in \bbN$. We cut the random walk trajectory in blocks of length
\beq
\ell := \ell(n,\gep) = \lfloor \gep n ^{2/d} \rfloor.
\eeq
The number of blocks is denoted by (we assume that $n\in \ell \bbN$ for simplicity)
\beq\label{eq:M}
M = \frac n \ell \sim \frac1\gep n^{1-2/d},
\eeq
as $n\to \infty$. The (renormalized) {\it skeleton} process is defined as
\beq
\hat S_i^{(\gep)} := \frac{S_{i\ell}}{n^{1/d}} \in \frac{\bbZ^d}{n^{1/d}},
\qquad 0\le i \le M,
\eeq
and its pair empirical measure, which is a random measure on $(\frac{\bbZ^d}{n^{1/d}})^2$, is denoted by
\beq
\label{eq:def-PEMa}
L_{M,\gep}^{(2)} :=\frac{1}{M} \sum_{0< i \le M} \gd_{(\hat S_{i-1}^{(\gep)}, \hat S_{i}^{(\gep)})}\,,
\eeq
where $\gd$ is the Dirac measure. We might sometimes omit the subscript $\gep$ to lighten notations. 
%It is sometimes simpler to deal with the continuous version of the pair empirical measure, which we denote by\textbf{[D: Check if we can get rid of the continuous versions of the empirical and the pair empirical measures]}
%\beq
%\label{eq:def-PEMb}
%\sL_{M,\gep}^{(2)}(\dd x, \dd y) := n^2 {\tt L}_{M,\gep}^{(2)}
%\Big( \frac{\lfloor xn^{1/d}\rfloor}{n^{1/d}}, \frac{\lfloor yn^{1/d}\rfloor}{n^{1/d}} \Big)
%\dd x \dd y,
%\eeq
%and which is a probability measure on $\bbR^d\times \bbR^d$. We also use the slight abuse of notation ${\tt L}_{M,\gep}^{(2)}(a,b) = {\tt L}_{M,\gep}^{(2)}(\{(a,b)\})$. 
Although the pair empirical measure is central in the proof, only its first marginal is necessary to state our second main result. We denote it by
\beq
%\sL_{M,\gep}(\dd x) = n {\tt L}_{M,\gep}
%\Big( \frac{\lfloor xn^{1/d}\rfloor}{n^{1/d}}\Big)
%\dd x,
%\qquad \textrm{with}\quad 
L_{M,\gep} :=\frac{1}{M} \sum_{0\le i < M} \gd_{\hat S_{i}^{(\gep)}}\,.
\eeq
%Recall the definition of $u_d^*$ in~\eqref{def:u*d}.
%
\begin{theorem}[Tube property]
\label{thm:2}
Let $b\in(0,\kappa)$ if $d=3$ or $b\in(0,\tfrac 2 d \kappa)$ if $d\ge 4$ be such that the variational problem in~\eqref{def:Ifunction} has a unique minimizer modulo space shifts. Let $\cU(\fm_b)$ be an open neighbourhood of $\fm_b$ w.r.t.\ the $\bfD$-topology. There exists $\gep_0$ such that for $\gep\in(0,\gep_0)$,
\beq
\limsup_{n\to \infty} n^{\frac{2}{d}-1} \log \bP(\tilde L_{M,\gep} \notin \cU(\fm_b) | R_n \le bn) <0.
\eeq
\end{theorem}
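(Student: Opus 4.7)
The strategy combines three ingredients: (i) a large deviation upper bound for the pair empirical measure $\tilde L_{M,\gep}^{(2)}$ of the skeleton, formulated on a pair-analogue of the Mukherjee--Varadhan compactification $\tilde\cX$; (ii) a coarse-graining estimate expressing $R_n/n$ as a functional of $L_{M,\gep}^{(2)}$, in the spirit of~\cite{BBH2001,Phetpradap}; and (iii) the uniqueness result of Theorem~\ref{thm:1}. One works with the pair measure because $R_n$ is not a continuous functional of the marginal alone, but the tube property for $\tilde L_{M,\gep}$ is a direct consequence of its counterpart for $\tilde L_{M,\gep}^{(2)}$.

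First, I would view the skeleton $(\hat S_i^{(\gep)})_{0\le i\le M}$ as a Markov chain on $n^{-1/d}\bbZ^d$ whose one-step kernel, after rescaling by $n^{1/d}$, is close to a Gaussian of variance $\gep/d$. A Donsker--Varadhan-type argument adapted to the $\bfD$-topology (in the spirit of~\cite{MV2016,KM2017,BKM2017}) should then yield, for closed sets $A$,
\beq
\limsup_{n\to\infty} n^{\frac 2 d -1} \log \bP\bigl(\tilde L_{M,\gep}^{(2)} \in A\bigr) \le - \inf_{\xi \in A} \cJ_\gep(\xi),
\eeq
where $\cJ_\gep$ is a Dirichlet-type energy on pair orbits. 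As $\gep \downarrow 0$, the functional $\gep\,\cJ_\gep$ is expected to $\Gamma$-converge to $\tfrac{1}{2d}\int|\nabla \phi|^2$, in which $\phi^2$ is the density of the absolutely continuous part of the first marginal.

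Next, couple $R_n$ to $L_{M,\gep}^{(2)}$ by decomposing $R_n$ over blocks and controlling the newly visited sites in each block via the Newtonian capacity at its two endpoints. Standard Green-function asymptotics then give, up to errors vanishing on scale $n^{1-2/d}$,
\beq
R_n/n \ge 1 - \int_{\bbR^d}\exp\bigl(-\gk \ell \,(G_\gep \ast L_{M,\gep})(x)\bigr) \dd x + o(1),
\eeq
for an appropriate mollifier $G_\gep$ on scale $\sqrt\gep$. This converts $\{R_n \le bn\}$ into the constraint $\phi \in \cD(b)$ in the limit. Combining the LDP upper bound with the matching lower-bound rate $\tfrac 1 d I(b)$ from Theorem~\ref{thm:MDP-rw}, a Laplace-type argument then implies that, conditionally on $R_n \le bn$, every $\bfD$-subsequential limit of $\tilde L_{M,\gep}$ belongs to the set of minimizer orbits of~\eqref{def:Ifunction}. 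Theorem~\ref{thm:1} identifies this set with $\fm_b$ in the stated range of $b$, which yields Theorem~\ref{thm:2}.

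The main technical obstacle is handling mass loss in the $\bfD$-topology: a priori the conditioned skeleton could split into several well-separated components, so that its $\bfD$-limit is a strict sub-probability element of $\tilde\cX$. Since $\phi \mapsto \int(1-e^{-\gk\phi^2})$ is \emph{strictly subadditive} under splitting of mass, dispersed configurations could in principle satisfy the range constraint at a strictly smaller Dirichlet cost, which would invalidate the identification of the minimizing orbits with $\fm_b$. Ruling this out requires a concentration-compactness step analogous to~\cite[Theorems~4--5]{BBH2001}: in the regimes $b \in (0,\gk)$ for $d=3$ and $b \in (0,\tfrac 2 d\gk)$ for $d\ge 4$, optimal profiles cannot split, so the conditioned skeleton is tight (modulo shifts) in the ordinary weak sense and its $\bfD$-limit is an honest probability measure. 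Once this tightness is combined with the two ingredients above and the uniqueness from Theorem~\ref{thm:1}, the tube property follows.
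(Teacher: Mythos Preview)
Your high-level strategy is correct and matches the paper's: an LDP upper bound for the skeleton (pair) empirical measure in a shift-invariant compactification, a coarse-graining of $R_n/n$ as a functional of $L_{M,\gep}^{(2)}$, and then the uniqueness result to pin down the conditioned limit. You also correctly identify mass loss in the $\bfD$-topology as the main obstacle.

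However, two points of your sketch need sharpening, and they are precisely where the paper's proof carries real content. First, your displayed inequality for $R_n/n$ is ill-posed as written (the integral $\int_{\bbR^d}\exp(-\ldots)\,\dd x$ diverges); the correct coarse-grained lower bound is $\int(1-\exp(-\ldots))\,\dd x$, and more importantly one must understand how this functional extends to $\tilde\cX$. The paper defines $\tilde\Gamma(\xi)=\sum_i\Gamma(\ga_i)+\gk\bigl(1-\sum_i\ga_i(\bbR^d)\bigr)$, the additional term $\gk\times(\text{missing mass})$ being exactly what evanescent mass contributes to the range (Lemma~\ref{lem:scaling}). Without this term the constraint set $\{\tilde\Gamma\le b\}$ is not closed and the argument collapses; with it, lower semi-continuity on level sets of $\tilde J$ is a genuine lemma (Proposition~\ref{prop:restGamma-lsc}).

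Second, you propose to rule out splitting by proving tightness of the conditioned skeleton; the paper instead works entirely on $\tilde\cX$ and shows directly (Lemma~\ref{lem:positive}) that the infimum of $\tilde J$ over $\tilde\cD(b)\setminus\cU(\fm_b)$ is strictly larger than over $\tilde\cD(b)$. The mechanism is not concentration-compactness in your sense but a short case analysis: (i) the empty collection has $\tilde\Gamma=\gk>b$; (ii) a multi-orbit $\xi$ can be improved by \emph{merging} two components, which strictly decreases $\Gamma$ (by $1-uv<(1-u)+(1-v)$) while not increasing $J$ (convexity of the gradient, \cite[Theorem~7.8]{LiebLoss}), and a subsequent rescaling then strictly decreases $J$; (iii) a single sub-probability orbit is excluded via the Euler--Lagrange analysis in~\cite[Lemma~14, Theorem~5]{BBH2001}, which is where the restriction $b<\tfrac{2}{d}\gk$ for $d\ge4$ enters. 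This merging-plus-rescaling device is the concrete substitute for the tightness step you envisage.
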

Theorem~\ref{thm:2} relies on a strengthening of the large deviation upper bound in Theorem~\ref{thm:MDP-rw} to the $\bfD$-topology introduced in Section~\ref{sec:compact-topo0}. To ease notation we define for $\ga\in\cM_{\le 1}(\bbR^d)$
\beq
J(\ga) :=
\begin{cases}
J(\phi) = \frac12 \|\nabla\phi\|_2^2 & \textrm{ if } \phi:= \sqrt{\frac{\dd \ga}{\dd x}} \text{ exists and is in } H^1(\bbR^d),\\
+\infty & \textrm{ else, }
\end{cases}
\eeq
\beq
\Gamma(\ga) := 
\begin{cases}
\Gamma(\phi) = \int (1-e^{-\gk \phi^2(x)})\dd x & \text{if } \phi:= \sqrt{\frac{\dd \ga}{\dd x}} \text{ exists, }\\
+\infty & \text{else.} 
\end{cases}
\eeq
Both functions are translation invariant and may be extended to $\tilde \cX$ by setting:
\beq
\label{eq:def-tilde-JG}
\tilde J(\xi) := \sum_{i\in I} J(\ga_i), \qquad
\tilde\Gamma(\xi) := \sum_{i\in I} \Gamma(\ga_i) + \kappa\Big(1- \sum_{i\in I} \ga_i(\bbR^d) \Big),
\qquad \xi =\{\tilde\ga_i\}_{i\in I}.
\eeq
We draw the reader's attention to the fact that even though $\tilde \cM_{\le 1}$ is embedded into $\tilde \cX$, $\tilde \Gamma(\{\tilde \ga\}) = \Gamma(\ga)$ if and only if $\ga$ is a {\it probability} measure. We will comment on the presence of the second term in the definition of $\tilde \Gamma$ further in the paper, see Lemma~\ref{lem:scaling} and the comment just below.
Theorem~\ref{thm:2} then relies on the following extension of Theorem~\ref{thm:MDP-rw}.
\begin{proposition}[Large Deviations Upper Bound at the level of orbits]
\label{pr:LDP-measure}
For any set $F\subseteq\tilde\cX$ closed in the $\bfD$-topology,
\beq
\label{eq:ldp-aux}
\limsup_{\gep\to 0} \limsup_{n\to\infty} n^{\frac2d -1}\log \bP(R_n\le bn, \tilde L_{M,\gep} \in F)
\le - { \tfrac 1d}\inf_{\xi \in F \cap \tilde\cD(b)} \tilde J(\xi),
\eeq
where
\beq
\tilde\cD(b) = \Big\{\xi\in\tilde\cX \colon \tilde\Gamma(\xi) \le b\Big\}.
\eeq
\end{proposition}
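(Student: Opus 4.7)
The plan is to combine the skeleton decomposition of~\cite{BBH2001,Phetpradap} with the Mukherjee--Varadhan compactification, thereby lifting the weak upper bound of Theorem~\ref{thm:MDP-rw} to the compact space $(\tilde\cX, \bfD)$. The key reduction is to show that, up to corrections negligible on the exponential scale $n^{1-2/d}$, the normalised range $R_n/n$ can be written as a continuous, translation-invariant functional of the skeleton pair empirical measure $L_{M,\epsilon}^{(2)}$, and then to apply a Donsker--Varadhan-type LDP for the latter in the $\bfD$-topology.

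First, I would cut the trajectory into $M$ mesoscopic blocks of length $\ell$ and work conditionally on the skeleton $(\hat S_i^{(\epsilon)})_{0\le i\le M}$. Inclusion-exclusion on $R_n = \bigcup_i R_i^{(\ell)}$ (where $R_i^{(\ell)}$ denotes the range produced by the $i$-th block), combined with asymptotic capacity/intersection estimates for the independent pieces of random walk at scale $n^{1/d}$ as already developed in~\cite{Phetpradap}, should yield
\beq
\frac{R_n}{n} = \Gamma_\epsilon\big(L_{M,\epsilon}^{(2)}\big) + o(1),
\eeq
with probability super-exponentially close to $1$ on scale $n^{1-2/d}$. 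Here $\Gamma_\epsilon$ is a bounded, translation-invariant, $\bfD$-continuous functional on pair measures whose $\epsilon\downarrow 0$ limit, when the skeleton concentrates on a smooth profile of density $\phi^2$, is $\Gamma(\phi)=\int(1-e^{-\kappa\phi^2})\dd x$. The technical point is the uniform control of the multi-point overlaps, so that the approximation holds uniformly in the skeleton shape and so that $\Gamma_\epsilon$ is genuinely $\bfD$-continuous on the pair orbit space.

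Second, the rescaled skeleton is a simple random walk on $n^{-1/d}\bbZ^d$ with $M\sim n^{1-2/d}/\epsilon$ steps of Brownian-renormalised variance of order $\epsilon$. Its (pair) empirical measure thus satisfies a Donsker--Varadhan LDP at speed $M$ whose $\epsilon\to 0$ rate function is the Dirichlet energy $\tfrac{1}{2d}\|\nabla\phi\|_2^2$ of the square root of the first-marginal density; the factor $1/d$ is the discrete-to-Brownian normalisation and is the source of the $\tfrac 1d$ in front of $\tilde J$ in~\eqref{eq:ldp-aux}. Since $\tilde\cX$ is compact under $\bfD$, only the upper bound part of the LDP is needed, and it is obtained from the classical exponential Markov inequality tested against a countable dense family of bounded continuous translation-invariant observables as in~\cite{MV2016}. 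Combining this with the first step and the constraint $\Gamma_\epsilon(L_{M,\epsilon}^{(2)}) \le b + o(1)$ delivers the desired upper bound at fixed $\epsilon$; sending $\epsilon\to 0$, using the lower semicontinuity of $\tilde J$ and a monotone approximation of $\tilde\Gamma$ by $\tilde\Gamma_\epsilon$, concludes the argument.

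The main obstacle, and what dictates the precise form of $\tilde\Gamma$ in~\eqref{eq:def-tilde-JG}, is the behaviour of the volume functional when the pair empirical measure loses mass or splits into clusters drifting apart in the $\bfD$-topology. If the limit orbit is $\xi=\{\tilde\alpha_i\}_{i\in I}$ with $\sum_i \alpha_i(\bbR^d)<1$, then the skeleton has macroscopically well-separated clusters together with a fraction $1-\sum_i\alpha_i(\bbR^d)$ of its time spent on mesoscopic excursions escaping to infinity; each such excursion contributes a \emph{fresh} Wiener sausage whose normalised volume is $\kappa$ per unit of lost mass. Proving that $\Gamma_\epsilon$ asymptotically factorises across widely separated clusters and picks up this extra additive $\kappa$-term for the escaped mass is a cluster-decomposition argument in the spirit of~\cite{MV2016,BKM2017}; executing it carefully, and deducing lower semicontinuity of $\tilde\Gamma$ on $(\tilde\cX,\bfD)$, is the technical heart of the proof.
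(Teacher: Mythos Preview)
Your high-level strategy matches the paper's: concentrate $R_n$ around its skeleton-conditional expectation, rewrite the latter as a translation-invariant functional of $L_{M,\epsilon}^{(2)}$, apply a pair-empirical LDP in the compactified topology (this is the paper's Proposition~\ref{pr:LDPpair}, imported from~\cite{EP23}), and then pass $\epsilon\to 0$. The factor $1/d$ and the correction $\kappa(1-\sum_i\alpha_i(\bbR^d))$ for escaped mass are identified correctly.

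Two technical points in your outline are imprecise and are exactly where the paper spends most of its effort. First, the volume functional $\tilde\phi_{\infty,\epsilon}$ (your $\Gamma_\epsilon$) is \emph{not} $\bfD_2$-continuous; the paper only establishes lower semicontinuity (Lemma~\ref{lem:lsc-phi}), which suffices since one needs $\{\tilde\phi_{\infty,\epsilon}\le b+2\epsilon_0\}$ closed. The proof is not a cluster-decomposition argument but rather a direct series expansion of $1-e^{-\cdots}$ after a time-truncation, checking that the coefficient kernels lie in $\cF_{2n}$. Second, and more substantively, your final line ``deducing lower semicontinuity of $\tilde\Gamma$ on $(\tilde\cX,\bfD)$'' is not available: the paper only obtains lsc of $\tilde\Gamma$ on the sublevel sets $\cA(K)=\{\tilde J\le K\}$ (Proposition~\ref{prop:restGamma-lsc}), because the approximation $|\tilde\Gamma-\tilde\Psi_\epsilon|\le C\sqrt{\tilde J_\epsilon}$ requires entropy control. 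This forces an extra layer: one first passes from $\tilde\phi_{\infty,\epsilon}$ to $\tilde\Psi_\epsilon\circ\tilde\Pr$ via a Pinsker inequality (Lemma~\ref{lem:Psieps-to-gamma-tilt}), then to a smoothed $\tilde\Gamma_\delta$ which \emph{is} lsc on all of $\tilde\cX$, takes $\epsilon\to 0$ at fixed $\delta$ via Proposition~\ref{pr:LB-JepsK}, and only then sends $\delta\to 0$ on $\cA(K)$. Your single $\epsilon\to 0$ limit would not close without this detour. A smaller omission is the a~priori restriction to $\cM(A,\epsilon_0)$ (Proposition~\ref{pr:restricy-PEM}), needed to make the local-CLT approximation of the conditional range uniform.
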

 Note that only the elements of $\tilde\cX$ with finite entropy contribute to the infimum in~\eqref{eq:ldp-aux} so the value assigned to $\Gamma$ for sub-probability measures with no density w.r.t.\ Lebesgue measure is actually irrelevant.\\

\par The rest of the paper is organized as follows. Section~\ref{sec:uni} is devoted to the proof of Theorem~\ref{thm:1}, while Section~\ref{sec:tube} contains the proof of Theorem~\ref{thm:2} given the validity of Proposition~\ref{pr:LDP-measure}. The remaining part, that is Section~\ref{sec:ldp}, is dedicated to the proof of that proposition.
\section{Proof of Theorem~\ref{thm:1}: Uniqueness}
\label{sec:uni}
Let us first collect some known facts.
It was shown in \cite{BBH2001} that any minimizer of~\eqref{def:Ifunction} satisfies the Euler-Lagrange equation 
\beq\label{eq:EL}
\gD \phi + f_{\gl,\mu}(\phi) = 0,
\eeq
where the nonlinearity $f_{\gl,\mu}$ is given by 
\begin{equation}
f_{\gl,\mu}(x)= \gl x + \mu x \gk e^{-\gk x^2}, \qquad x\in\bbR,
\end{equation}
$\gl$ and $\mu$ being the Lagrange multipliers. We further know from \cite{BBH2001} that (i) the set of minimizers is stable under space shifts and (ii) any minimizer of this variational problem is (up to spatial shifts) radially symmetric (strictly) decreasing with $\lim_{|x|\to\infty}\phi(x)= 0$. Hence, we may and will assume from now on that $\phi$ is maximized at the origin. This implies that we can assume that~\eqref{eq:EL} is equipped with the boundary conditions
\begin{equation}
\begin{cases}
\text{$\phi$ radially symmetric decreasing with }\lim_{|x|\to\infty}\phi(x)= 0,\\
\nabla \phi(0) = 0.
\end{cases}
\end{equation} 
Moreover, letting $y(r) := \phi(r, 0, \ldots, 0)$ for $r\ge 0$, the function $y$ satisfies the (one-dimensional) Euler-Lagrange equation (see~\cite[Proof of Lemma 11]{BBH2001})
\beq
\label{eq:EL1d-}
\begin{cases}
	y''(r) + \frac{d-1}{r} y'(r) +f_{\gl,\mu}(y(r))=0,\\
	\lim_{r\to\infty} y(r)=0,\\
	y'(0)=0.
\end{cases}
\eeq
The proof of Theorem~\ref{thm:1} consists of two steps:
\begin{itemize}
	\item (Step 1) We prove that the pair of Lagrange multipliers $(\gl,\mu)$ is uniquely determined by $b$.
	\item (Step 2) We prove that~\eqref{eq:EL1d-} has a unique solution. 
\end{itemize}
\textbf{Step 1.} \emph{Determination of the Lagrange multipliers.} In this step we prove the following
\begin{proposition}\label{pr:uniq-Lagr-mult}
Assume that $I$ is differentiable at $b\in(0,\gk)$. Let $\phi$ be a minimizer of the variational problem in~\eqref{def:Ifunction}. Then $\phi$ is a solution of the Euler-Lagrange equation in~\eqref{eq:EL} and the Lagrange multipliers satisfy:
	\beq
	\ba
	{\rm (i)}\quad & \gl + \mu b = 2(1-\tfrac 2 d) I(b),\\
	{\rm (ii)}\quad & \mu =2 I'(b).
	\ea
	\eeq
\end{proposition}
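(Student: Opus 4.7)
The plan is to establish the two identities by separate but classical arguments. Identity (i) is a Pohozaev-type integral identity for the semilinear elliptic equation~\eqref{eq:EL} posed on $\mathbb{R}^d$, while identity (ii) follows from the variational characterization of the Lagrange multiplier $\mu$ as $2I'(b)$ via an infinitesimal perturbation of the minimizer in the direction that displaces the constraint $\Gamma(\phi)\le b$.

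For (i), I would multiply~\eqref{eq:EL} by $x\cdot\nabla\phi$ and integrate over a ball $B_R$. Using $\nabla\phi\cdot\nabla(x\cdot\nabla\phi)=|\nabla\phi|^2+\tfrac12 x\cdot\nabla(|\nabla\phi|^2)$, then integration by parts, one obtains the bulk identity
\beq
\Bigl(\tfrac{d-2}{2}\Bigr)\int_{B_R}|\nabla\phi|^2\,\dd x - d\int_{B_R} F(\phi)\,\dd x + \{\text{boundary terms on }\partial B_R\}=0,
\eeq
where $F(s)=\tfrac{\lambda}{2}s^2+\tfrac{\mu}{2}(1-e^{-\gk s^2})$ is the antiderivative of $f_{\gl,\mu}$. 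Sending $R\to\infty$, the boundary terms vanish by the radial decay of $\phi$ and $\nabla\phi$ at infinity (which follows from $\phi\in H^1(\bbR^d)$, radial monotonicity provided by~\cite{BBH2001}, and standard elliptic regularity applied to~\eqref{eq:EL1d-}). Since $\|\phi\|_2^2=1$ and $\Gamma(\phi)=b$ at any minimizer (the constraint is active, otherwise $\mu=0$ and the argument trivializes), one has $\int F(\phi)=(\gl+\mu b)/2$ and $\|\nabla\phi\|_2^2=2I(b)$, and the identity rearranges to (i).

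For (ii), I would pick a smooth compactly supported $v\in H^1(\bbR^d)$ satisfying the two orthogonality conditions $\langle \phi,v\rangle=0$ and $2\gk\langle \phi e^{-\gk\phi^2},v\rangle=1$. Such a $v$ exists because $\phi$ and $\phi e^{-\gk\phi^2}$ are linearly independent in $L^2$ (otherwise $\phi^2$ would be constant, contradicting $\phi\in L^2$). Define the normalized perturbation $\phi_t:=(\phi+tv)/\|\phi+tv\|_2$, which satisfies $\|\phi_t\|_2=1$, $\Gamma(\phi_t)=b+t+O(t^2)$, and $\tfrac12\|\nabla\phi_t\|_2^2=I(b)-t\langle\Delta\phi,v\rangle+O(t^2)$. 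Substituting the Euler--Lagrange equation $-\Delta\phi=\gl\phi+\mu\gk\phi e^{-\gk\phi^2}$ and using the two orthogonality relations collapses the first-order term to $t\mu/2$. Since $\phi_t\in\cD(b+t+O(t^2))$, we obtain
\beq
I\bigl(b+t+O(t^2)\bigr)\le I(b)+\tfrac{\mu}{2}t+O(t^2).
\eeq
Dividing by $t$ and letting $t\to 0^+$, respectively $t\to 0^-$, and invoking the assumed differentiability of $I$ at $b$, yields $I'(b)\le\mu/2$ and $I'(b)\ge\mu/2$, hence (ii).

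The main technical point is the decay of $\phi$ (and its gradient) at infinity needed for the Pohozaev identity; this should be extracted from the ODE~\eqref{eq:EL1d-} together with the radial monotonicity and $L^2$-integrability of the minimizer established in~\cite{BBH2001}. The second potential subtlety is the validity of the perturbation argument: one must ensure that the second-order correction in $\Gamma(\phi_t)$ does not spoil the differentiation, but this is handled automatically by the hypothesis that $I$ is differentiable (a condition which holds at almost every $b\in(0,\gk)$ by monotonicity of $I$, giving the ``almost every'' statement of Theorem~\ref{thm:1}).
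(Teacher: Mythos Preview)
Your proposal is correct and follows essentially the same approach as the paper: for (i) the paper invokes Pohozaev's identity directly from~\cite{BL83} (which you rederive by multiplying by $x\cdot\nabla\phi$), and for (ii) the paper runs the identical perturbation argument, choosing $h\in\cS(\bbR^d)$ with $\langle\phi,h\rangle=0$ and $\gk\langle\phi e^{-\gk\phi^2},h\rangle=1$, then comparing $I(b+2\gep+o(\gep))$ with $J_1(\phi+\gep h)$ to first order. The only cosmetic differences are a factor of two in the normalization of your test function $v$ versus the paper's $h$, and that the paper uses the crude bound $J_1((\phi+\gep h)/\|\phi+\gep h\|_2)\le J_1(\phi+\gep h)$ (valid since $\|\phi+\gep h\|_2>1$) rather than tracking the $O(t^2)$ correction from normalization as you do.
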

Note that Proposition~\ref{pr:uniq-Lagr-mult} uniquely determines the Lagrange multipliers $(\mu(b),\lambda(b))$.
\begin{proof}[Proof of Proposition~\ref{pr:uniq-Lagr-mult}]
	For the proof we fix a solution $\phi$ to~\eqref{eq:EL}. Let us first prove (i). By Pohozaev's identity~\cite[Proposition 1, Chapter 2]{BL83} $\phi$ satisfies
	\begin{equation}
	\|\nabla \phi\|_2^2 = \frac{2d}{d-2}\int F_{\gl,\mu}(\phi(x))\, dx,
	\end{equation}
	where
	\begin{equation}
	F_{\gl,\mu}(z)= \int_0^z f_{\gl,\mu}(v)\, \dd v 
	= \frac12 \gl z^2 + \frac12 \mu (1-e^{-\gk z^2}).
	\end{equation}
	To apply this identity, one may actually check that
	\begin{itemize}
		\item $f_{\gl,\mu}\colon \bbR\to\bbR$ is continuous;
		\item $\phi\in L^\infty_{\loc}$;
		\item $\nabla \phi \in L^2$;
		\item $F_{\gl,\mu}(\phi)\in L^1$.
	\end{itemize}
	Thus, for any function $\phi$ that is a minimizer of~\eqref{def:Ifunction} and a solution to~\eqref{eq:EL}, we can write
	\begin{equation}
	I(b)= \tfrac12\|\nabla \phi\|_2^2= \frac{d}{2 (d-2)}\Big(\gl \|\phi\|_2^2 + \mu \int_{\R^d}\big(1-e^{-\gk\phi^2(x)}\big) \dd x\Big)=
	\frac{d}{2 (d-2)}(\gl +\mu b).
	\end{equation}
	Here, we used the fact that the minimizer is in $\cD(b)$ and that the second constraint is saturated as a consequence of~\cite[Lemma 12]{BBH2001} and its proof. We can deduce therefore that
	\begin{equation}\label{eq:Phozaev}
	\lambda +\mu b= \frac{2 (d-2)}{d}I(b).
	\end{equation}
	Equation~\eqref{eq:Phozaev} shows that $\lambda$ is uniquely determined by $\mu$.\\
	\par Let us now prove (ii). Denote by $\cS(\R^d)$ the Schwartz space. We define the functional $\cH:\cS(\R^d)\to\R^2$ by
	\begin{equation}
	\cH(h) = \begin{pmatrix}
	\int h(x)\phi(x)\, dx\\
	\int h(x) \gk\phi(x) e^{-\gk \phi^2(x)}\, dx.
	\end{pmatrix}
	\end{equation}
	We split the proof in two parts.\\
	\textbf{Step (a)} 
	Let us first prove that $\cH$ is surjective. To see why, assume that $\cH$ is not surjective. As $\cH$ is linear, this implies that the dimension of the range of $\cH$ is one. In particular, there is a vector $(\alpha,\beta)\in\R^2$ such that $\cH(h)$ is orthogonal to $(\alpha, \beta)$ for all $h\in \cS(\R^d)$.
	Thus, for all $h\in\cS(\R^d)$
	\begin{equation}
	\langle h, \alpha \phi+ \beta\gk\phi e^{-\gk\phi^2}\rangle =0,
	\end{equation}
	which implies that
	\begin{equation}
	\alpha \phi(x)+ \beta\gk\phi(x) e^{-\gk\phi(x)^2}=0 \quad\text{for almost all }x\in \R^d.
	\end{equation}
	We conclude that
	\begin{equation}
	\phi(\R^d)\subseteq \Big\{0,\sqrt{-\frac{1}{\gk}{\log(-\frac{\alpha}{\gk\beta})}}\Big\}.
	\end{equation}
Moreover, we know that $\phi$ as a minimizer is radially symmetric and strictly decreasing. This however is only possible if $\phi(x)>0$ for all $x\in\R^d$, thus $\phi(\R^d)= \sqrt{-\frac{1}{\gk}{\log(-\frac{\alpha}{\gk\beta})}}$ which contradicts the fact that $\|\phi\|_2=1$. Hence, $\cH$ is surjective.\\
	\noindent
	\textbf{Step (b)} Since $\cH$ is surjective we may pick $h\in \cS(\R^d)$ such that $\cH(h) = (0,1)$. We now define three functionals:
	\begin{equation}\label{eq:J}
	J_1(\phi) = \tfrac12 \|\nabla \phi\|_2^2,\quad J_2(\phi) = \| \phi\|_2^2,\quad J_3(\phi)= \int (1-e^{-\gk \phi(x)^2}) \dd x.
	\end{equation}
	With this choice of $h$, as $\gep\to 0$, a direct calculation shows that 
	\begin{equation}\label{eq:J3}
	J_3\Big(\frac{\phi + \gep h}{\|\phi + \gep h\|_2}\Big) = b+ 2\gep + o(\gep).
	\end{equation}
	Note furthermore that since $\phi$ and $h$ are orthogonal in $L^2$ we have that 
	\begin{equation}
	\|\phi+\gep h\|_2 = \sqrt{1+\gep^2\|h\|_2^2} >1.
	\end{equation}
	Hence, 
	\begin{equation}\label{eq:J1monotone}
	I(b+2\gep+o(\gep))\leq J_1\Big(\frac{\phi + \gep h}{\|\phi + \gep h\|_2}\Big) \leq J_1(\phi+\gep h).
	\end{equation}
	Thus, expanding $I$ and $J_1$ around $b$ and $\phi$ respectively, we see that
	\begin{equation}
	I(b+2\gep + o(\gep))= I(b) +2\gep I'(b) + o(\gep)\leq J_1(\phi)- \gep \int \Delta \phi(x)h(x)\, dx + o(\gep). 
	\end{equation}
	By substracting $I(b) = J_1(\phi)$ and dividing by $\gep$ in the previous inequality, and since $\gep$ can be positive or negative, we conclude that
	$I'(b) = - \tfrac12\int \Delta \phi(x) h(x)\, dx$. We may conclude the proof by using that $\phi$ solves the Euler-Lagrange equation and that $\cH(h)=(0,1)$.
\end{proof}
\textbf{Step 2.} \emph{Uniqueness of solution to Equation~\eqref{eq:EL1d-}}: 
Let us write (for simplicity we omit the dependence on $b$):
\beq
f(r) = r\psi(r), \qquad \psi(r) := \gl + \mu \gk e^{-\gk r^2}.
\eeq
We shall use Theorem 1 in Serrin and Tang~\cite{SerrinTang2000}. 
Let us first check that Hypothesis (H1) therein is satisfied in our case. 
By Proposition~\ref{pr:uniq-Lagr-mult}\rm{(ii)}, we have that $\mu <0$, so that by Proposition~\ref{pr:uniq-Lagr-mult}\rm{(i)}, $\psi(+\infty) = \lambda >0$. Next we will show that $\psi(0) = \lambda +\gk\mu<0$. First, assume $d\ge 5$ and define $h(u) = (1-u)^{\frac{2}{d}-1}\chi(u)$, where $\chi(u) := 2\gk^{2/d}I(\gk u)$ for $u\in(0,1)$, as in \cite[Eq.~(1.10)]{BBH2001}. By~\cite[Theorem 5(iii)]{BBH2001}, $h'(u)\le0$, from which we obtain
\beq
\label{eq:chi}
(1-\tfrac 2d) \chi(u)  + (1-u)\chi'(u) \le0, \qquad {\rm for\ a.e.\ } u\in(0,u^*_d).
\eeq
Combining with (i) and (ii) in Proposition~\ref{pr:uniq-Lagr-mult}, we get
\beq
\gl + \gk \mu = (1-\tfrac 2d) I(b)  + (\gk-b)I'(b) \le0, \qquad {\rm for\ a.e.\ } b\in(0,\gk u^*_d).
\eeq
Let us now discard the possibility of equality in the line above. By~\cite[Theorem 3(iii)]{BBH2001}, we also have $\tfrac 2 {du} \chi(u) + \chi'(u)\le 0$ for a.e.\ $u\in(0,1)$, so that equality in~\eqref{eq:chi} and the fact that $\chi(u)>0$ (see~\cite[Theorem 3]{BBH2001})  yields $u\ge 2/d$. This would contradict our assumption that $b=\gk u < 2\gk/d$. If $d=4$, the same argument goes through by applying~\cite[Theorem 4(ii)]{BBH2001} instead and noticing that the exponents $1-\tfrac 2 d$ and $\tfrac 2d$ coincide.
If $d=3$, we apply~\cite[Theorem 4(ii)]{BBH2001} to the function $h(u) = (1-u)^{-2/d}\chi(u)$ instead and get
\beq
0\ge \tfrac 2d I(b)  + (\gk-b)I'(b) > (1-\tfrac 2d) I(b)  + (\gk-b)I'(b), \qquad b\in(0,\gk).
\eeq
This settles our claim that $\psi(0)<0$.
Then, the equation $\psi(r)=0$ has a unique positive solution which we denote by $\ga$. One can readily check that $f$ is continuous on 	$(0, \infty)$ with $f(r)\le 0$ for $r\in(0,\ga]$ and $f(r)>0$ for $r>\ga$.

Let us now check Hypothesis (H2), according to which the function
\beq
g(r) := \frac{rf'(r)}{f(r)} 
\eeq
should be non-increasing on $(\ga,\infty)$. By a direct computation,
\beq
g(r) = 1 + r \frac{\psi'(r)}{\psi(r)} = 1 + 2\gk r^2
\Big[ \frac{1}{1 + \frac{\gk \mu}{\gl} e^{-\gk r^2}} - 1\Big].
\eeq
By definition, $\psi(\ga)=0$, from which we get
\beq
\frac{\gk \mu}{\gl} = - e^{\gk \ga^2},
\eeq
so that
\beq
\label{eq:aux-uniq}
g(r) -1 = 2\gk r^2
\Big[ \frac{1}{1 - e^{\gk(\ga^2- r^2)}} - 1\Big].
\eeq
Let us re-parametrize the problem by setting $u=u(r):=e^{-\gk r^2} \in (0, e^{-\gk \ga^2})$, as $r\in (\ga, \infty)$. Since $u(r)$ is decreasing in $r$, we must now check that the right-hand side in \eqref{eq:aux-uniq} is non-decreasing in $u$. We get
\beq
g(r) -1 = 2(-\log u) \Big[\frac{1}{1- e^{\gk \ga^2}u} - 1 \Big],
\eeq
and
\beq
\frac{\dd}{\dd u}g(r) = \frac{2e^{\gk \ga^2}\Xi(u)}{(1- e^{\gk \ga^2}u)^2},
\qquad \Xi(u) := -\log u  - (1 - e^{\gk\ga^2}u).
\eeq
It is now straightforward to show that $\Xi(u)\ge 0$ for $0 < u < e^{-\gk\ga^2}$, which completes the proof.\\
\par We may now	conclude the proof of Theorem~\ref{thm:1}, noting that:
\begin{lemma}\label{lem:differentiability}
	The function $b\mapsto I(b)$ is almost-everywhere differentiable on $[0,\infty)$.
\end{lemma}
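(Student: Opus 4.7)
The plan is to reduce the statement to Lebesgue's classical theorem on the differentiability of monotone functions: any monotone function on an interval of $\bbR$ is differentiable almost everywhere. Thus it suffices to verify that $b \mapsto I(b)$ is monotone (and finite) on the relevant range.

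First I would check monotonicity of $I$. If $0 \le b_1 \le b_2$, then the only $b$-dependent constraint in the definition of $\cD(b)$, namely $\int(1-e^{-\gk\phi^2}) \le b$, becomes weaker as $b$ grows, so $\cD(b_1) \subseteq \cD(b_2)$. Taking infima of $\tfrac12\|\nabla \phi\|_2^2$ over these nested sets yields $I(b_2) \le I(b_1)$, so $I$ is non-increasing on $[0,\infty)$.

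Next, to rule out the degenerate case $I\equiv+\infty$ on some subinterval, I would check that $I(b)<\infty$ for every $b>0$. To this end, fix any smooth compactly supported $\phi_0$ with $\|\phi_0\|_2=1$ and set $\phi_\gep(x):=\gep^{-d/2}\phi_0(x/\gep)$; then $\|\phi_\gep\|_2=1$, $\|\nabla\phi_\gep\|_2^2=\gep^{-2}\|\nabla\phi_0\|_2^2<\infty$, and
\beq
\int_{\bbR^d}\bigl(1-e^{-\gk\phi_\gep^2(x)}\bigr)\,\dd x \le \leb(\supp\phi_\gep) = \gep^d\,\leb(\supp \phi_0),
\eeq
which can be made at most $b$ by choosing $\gep$ small. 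Hence $\phi_\gep\in\cD(b)$ and $I(b)<\infty$.

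Finally I would conclude by invoking Lebesgue's theorem: since $I$ is a real-valued non-increasing function on $(0,\infty)$, it is differentiable at almost every $b\in(0,\infty)$; combined with the trivial differentiability on $(\gk,\infty)$, where $I\equiv 0$, this yields almost everywhere differentiability on $[0,\infty)$. I do not expect any serious obstacle in this argument; the only point deserving a moment of care is finiteness of $I$ near $b=0$, which the rescaling above settles.
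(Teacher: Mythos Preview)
Your proposal is correct and follows essentially the same approach as the paper: observe that $b\mapsto I(b)$ is monotone (non-increasing, since the constraint sets $\cD(b)$ are nested) and invoke Lebesgue's theorem on the a.e.\ differentiability of monotone functions. Your additional verification that $I(b)<\infty$ for $b>0$ is a harmless extra detail the paper omits.
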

\begin{proof}[Proof of Lemma~\ref{lem:differentiability}]
	Note that $b\mapsto I(b)$ is a monotone function. Thus, by Lebesgue's theorem on the differentiability of monotone functions we can conclude that $I$ is almost everywhere differentiable.
	\end{proof}
\section{Proof of Theorem~\ref{thm:2}: Tube property}
\label{sec:tube}
In this section we prove Theorem~\ref{thm:2} assuming the validity of Proposition~\ref{pr:LDP-measure} and the first statement in Proposition~\ref{prop:restGamma-lsc}.

\begin{lemma}[Scaling properties]
\label{lem:scaling}
Let $\phi\in H^1(\bbR^d)$. For $a\in (0,\infty)$, define $\phi_{[a]}(x):= a^{d/2}\phi(ax)$. Then,
\beq
\|\phi_{[a]}\|_2 = \|\phi\|_2, \qquad \|\nabla\phi_{[a]}\|_2 = a \|\nabla\phi\|_2.
\eeq
Moreover, the function
\beq
a\in(0,\infty) \mapsto \int_{\bbR^d} \Big(1-e^{-\gk \phi_{[a]}^2(x)}\Big) \dd x
\eeq
is continuous, non-increasing and converges to 0 as $a\to \infty$, and to $\gk\|\phi\|_2^2$ as $a\to 0$.
\end{lemma}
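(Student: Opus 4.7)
The plan is to reduce everything to a single-variable substitution. For the first two identities I would just apply the change of variables $y = ax$ in the integrals defining $\|\phi_{[a]}\|_2^2$ and $\|\nabla \phi_{[a]}\|_2^2$, using the chain rule $\nabla \phi_{[a]}(x) = a^{d/2+1} (\nabla \phi)(ax)$. These are one-line computations producing the factors $a^{-d}$ and $a^{2-d}$ that combine with $a^d$ and $a^{d+2}$ respectively to give the stated scalings.

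The interesting part is the third claim. After the same substitution $y = ax$, the integral becomes
\beq
F(a) := \int_{\bbR^d} \bigl(1-e^{-\gk a^d \phi^2(y)}\bigr) a^{-d}\, \dd y.
\eeq
Setting $t = a^d$ and $G(t,y) := t^{-1}(1-e^{-\gk t \phi^2(y)})$, the question reduces to studying the continuity, monotonicity and limits of $\int G(t,y)\,\dd y$ in $t\in(0,\infty)$. The elementary bound $(1-e^{-u})/u \le 1$ for $u \ge 0$ gives $G(t,y) \le \gk \phi^2(y)$, which is integrable since $\phi \in L^2$, so dominated convergence yields continuity in $t$ and the two limits: $G(t,y) \to \gk \phi^2(y)$ as $t \to 0$ and $G(t,y) \to 0$ as $t \to \infty$ pointwise in $y$ (the latter on the set $\{\phi \ne 0\}$, which is enough for dominated convergence).

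For monotonicity, I would show that $t \mapsto G(t,y)$ is non-increasing for each $y$. Differentiating,
\beq
\partial_t G(t,y) = \frac{(1+\gk t \phi^2(y)) e^{-\gk t \phi^2(y)} - 1}{t^2},
\eeq
and the elementary inequality $(1+s)e^{-s} \le 1$ for $s \ge 0$ (which follows because the function $s \mapsto (1+s)e^{-s}$ has derivative $-s e^{-s} \le 0$ and value $1$ at $s=0$) gives that this derivative is non-positive. Integrating in $y$ preserves monotonicity in $t$, and composing with the non-decreasing map $a \mapsto a^d$ gives the claimed monotonicity in $a$.

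I do not anticipate any genuine obstacle: every step is an explicit change of variables or a dominated-convergence argument controlled by $\gk \phi^2 \in L^1$. The only mildly delicate point is making sure the upper bound $G(t,y) \le \gk \phi^2(y)$ is used uniformly in $t$ on $(0,\infty)$, which is what allows the single dominated-convergence argument to cover continuity as well as both limits.
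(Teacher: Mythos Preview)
Your proof is correct and self-contained. The paper does not actually prove this lemma but simply refers the reader to~\cite[Section~5, proof of Lemma~12]{BBH2001}; your change-of-variables and dominated-convergence argument is exactly the standard verification one would find there, so there is no substantive difference to compare.
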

\begin{proof}[Proof of Lemma~\ref{lem:scaling}]
We refer the reader to~\cite[Section 5, proof of Lemma 12]{BBH2001}.
\end{proof}
The limit $a\to 0$ in Lemma~\ref{lem:scaling} corresponds to {\it evanescent mass} and better enlightens our definition of $\tilde \Gamma$ on $\tilde\cX$ in~\eqref{eq:def-tilde-JG}.

\begin{lemma}
	\label{lem:positive}
Let $b\in(0,\kappa)$ if $d=3$ or $b\in(0,\tfrac 2 d \kappa)$ if $d\ge 4$ be such that the variational problem in~\eqref{def:Ifunction} has a unique minimizer modulo space shift. Let $\cU(\fm_b)$ be an open neighbourhood of $\fm_b$ w.r.t.\ the $\bfD$-topology. Then,
\beq\label{eq:positive}
\inftwo{\xi \in \tilde \cD(b)}{\xi \notin \cU(\fm_b)} \tilde J(\xi)
>\inf_{\xi \in \tilde\cD(b)} \tilde J(\xi)\,.
\eeq
\end{lemma}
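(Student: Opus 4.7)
The plan is to argue by contradiction, combining the compactness of the Mukherjee--Varadhan space $(\tilde\cX,\bfD)$ with lower semicontinuity of $\tilde J$ and $\tilde\Gamma$, and the classical uniqueness statement in Theorem~\ref{thm:1}. If~\eqref{eq:positive} fails, one can extract a sequence $(\xi_n)\subset \tilde\cD(b)\setminus\cU(\fm_b)$ with $\tilde J(\xi_n)\to\inf_{\tilde\cD(b)}\tilde J$ and, by compactness, a $\bfD$-convergent subsequence with limit $\xi_\infty\in\tilde\cX$. The first statement of Proposition~\ref{prop:restGamma-lsc} places $\xi_\infty\in\tilde\cD(b)$, and the analogous lower semicontinuity of $\tilde J$ (which follows from the weak $H^1$-lsc of the Dirichlet integral applied piecewise to the orbit components of a $\bfD$-converging sequence) makes $\xi_\infty$ a minimizer. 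Since $\tilde\cX\setminus\cU(\fm_b)$ is $\bfD$-closed, $\xi_\infty\notin\cU(\fm_b)$, and the whole argument reduces to the structural claim that any minimizer of $\tilde J$ on $\tilde\cD(b)$ lies in $\fm_b$.

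Write $\xi_\infty=\{\tilde\alpha_i\}_{i\in I}$ with $\alpha_i=\phi_i^2\,\dd x$, $m_i=\|\phi_i\|_2^2$ and $\tilde m:=\sum_i m_i\le 1$. First, $\inf_{\tilde\cD(b)}\tilde J=I(b)$: the bound $\le$ follows from $\fm_b=\{\widetilde{\phi_b^2}\}\in\tilde\cD(b)$, and the matching lower bound from comparing Proposition~\ref{pr:LDP-measure} (applied to $F=\tilde\cX$) with Theorem~\ref{thm:MDP-rw}. It therefore suffices to rule out $|I|\ge 2$ and $\tilde m<1$, after which Theorem~\ref{thm:1} identifies $\xi_\infty$ with $\fm_b$. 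Multi-piece configurations collapse to the single-piece case via a widely-separated superposition $\phi:=\sum_i\phi_i(\cdot-x_i)$ with $|x_i-x_j|\to\infty$: the supports become asymptotically disjoint and hence $\|\phi\|_2^2\to\tilde m$, $\|\nabla\phi\|_2^2\to\sum_i\|\nabla\phi_i\|_2^2$, $\Gamma(\phi^2)\to\sum_i\Gamma(\alpha_i)$, leaving $\tilde J$ and $\tilde\Gamma$ asymptotically unchanged.

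For a single-piece sub-probability $\{\tilde\phi^2\}$ with $\|\phi\|_2^2=\tilde m\in[1-b/\kappa,1)$ (the feasibility range), I would apply Lemma~\ref{lem:scaling} to $\phi^*:=\phi/\sqrt{\tilde m}$ with parameter $a:=\tilde m^{1/d}$: using the constraint $\Gamma(\phi^2)\le b-\kappa(1-\tilde m)$, a direct computation gives $(\phi^*)_{[a]}\in\cD(b)$ and $J\big((\phi^*)_{[a]}\big)=\tilde m^{2/d-1}J(\phi)$, whence $J(\phi)\ge\tilde m^{1-2/d}\,I(F(\tilde m))$ with $F(\tilde m):=(b-\kappa)/\tilde m+\kappa$. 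The right-hand side equals $I(b)$ at $\tilde m=1$, and differentiating in $\tilde m$ yields $\tilde m^{-2/d}\big[(1-\tfrac{2}{d})I(F)+(\kappa-F)I'(F)\big]$, which is strictly negative throughout $F(\tilde m)\le b<\tfrac{2}{d}\kappa$ by the strict form of the derivative inequality~\eqref{eq:chi} established in Step~2 of the proof of Theorem~\ref{thm:1}. Hence the bound is strictly decreasing in $\tilde m$ and $J(\phi)>I(b)$ for every $\tilde m<1$ in the feasibility range, excluding mass-loss minimizers.

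The main obstacle is extracting this strict inequality $J(\phi)>I(b)$ from the naive scaling bound $J(\phi)\ge\tilde m^{1-2/d}I(b)$, which is insufficient on its own because $\tilde m^{1-2/d}<1$. It is the coupling of the scaling relation $F(\tilde m)=(b-\kappa)/\tilde m+\kappa$ with the strict version of~\eqref{eq:chi} that delivers the improvement, and the threshold $b<\tfrac{2}{d}\kappa$ is sharp for this mechanism: above it, the inequality in~\eqref{eq:chi} can saturate, mass-loss configurations become competitive, and classical minimizers are known to disappear for $d\ge 5$~\cite[Theorem~5]{BBH2001}. The remaining ingredients---compactness, the two lsc statements, the multi-piece merger, and the final appeal to Theorem~\ref{thm:1}---are standard.
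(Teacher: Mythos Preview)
Your overall architecture---compactness of $(\tilde\cX,\bfD)$, lower semicontinuity of $\tilde J$ and $\tilde\Gamma$ to produce a minimizer $\xi_\infty\in\tilde\cD(b)\setminus\cU(\fm_b)$, then a case analysis to force $\xi_\infty=\fm_b$---matches the paper exactly. The differences lie in how the cases are dispatched, and one of your reductions has a genuine gap.

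\textbf{Multi-piece case.} Your ``widely-separated superposition'' does not rule out $|I|\ge 2$ when the total mass $\tilde m=\sum_i m_i$ equals $1$. The superposition $\phi=\sum_i\phi_i(\cdot-x_i)$ is, for any finite separation, only an \emph{approximate} element of $\cD(b)$ with $J(\phi)$ only \emph{approximately} equal to $I(b)$; Theorem~\ref{thm:1} characterizes exact minimizers, not near-minimizers, so you cannot conclude that $\phi$ (or $\xi_\infty$) is a shift of $\phi_b$. Passing to the limit in the separation sends $\{\tilde\phi^2\}$ back to the multi-piece $\xi_\infty$ in the $\bfD$-topology, which is circular. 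The paper handles $|I|\ge 2$ by an \emph{overlapping} merge rather than a separated one: pick representatives $\alpha_1,\alpha_2$ so that $\min(\phi_1^2,\phi_2^2)>0$ on a set of positive measure, set $\alpha_3=\alpha_1+\alpha_2$, use the convexity inequality $J(\alpha_3)\le J(\alpha_1)+J(\alpha_2)$ and the strict inequality $1-e^{-\kappa(\phi_1^2+\phi_2^2)}<(1-e^{-\kappa\phi_1^2})+(1-e^{-\kappa\phi_2^2})$ to get $\tilde\Gamma$ strictly below $b$, and then scale (Lemma~\ref{lem:scaling}) to strictly lower $\tilde J$. This gives a strict improvement regardless of $\tilde m$.

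\textbf{Single-piece sub-probability case.} Here your scaling route is a legitimate, and in some ways more elementary, alternative to the paper's Euler--Lagrange argument (which invokes \cite[Lemma~14 and Theorem~5(ii)]{BBH2001}). Your computation $J(\phi)\ge \tilde m^{1-2/d}I(F(\tilde m))$ is correct, and the function $\tilde m\mapsto \tilde m^{1-2/d}I(F(\tilde m))$ is, after the change of variables $u=F(\tilde m)/\kappa$, a constant multiple of $h(u)=(1-u)^{2/d-1}\chi(u)$. However, you cannot conclude strict monotonicity of $h$ from ``$h'<0$ a.e.'' alone, since $I$ is only known to be monotone, not absolutely continuous. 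The fix is to first invoke the \emph{non-increasing} property of $h$ directly from \cite[Theorems~4(ii) and~5(iii)]{BBH2001}, and then observe that $h'<0$ a.e.\ on $(0,2/d)$ (the content of Step~2 in the proof of Theorem~\ref{thm:1}) rules out $h$ being constant on any subinterval there; non-increasing plus nowhere locally constant gives strict decrease. With that correction your argument for $\tilde m<1$ goes through.

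\textbf{Minor point.} Your appeal to Proposition~\ref{pr:LDP-measure} and Theorem~\ref{thm:MDP-rw} to obtain the ``matching lower bound'' $\inf_{\tilde\cD(b)}\tilde J\ge I(b)$ yields the inequality in the wrong direction (the upper bound in Proposition~\ref{pr:LDP-measure} gives $I(b)\ge\inf_{\tilde\cD(b)}\tilde J$). Fortunately you do not need this identity up front: your case analysis, once repaired, directly shows that every $\xi\in\tilde\cD(b)$ with $\xi\neq\fm_b$ has $\tilde J(\xi)>I(b)=\tilde J(\fm_b)$, which is exactly the statement to be proved.
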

\begin{proof}[Proof of Lemma~\ref{lem:positive}] Assume that the left-hand side in~\eqref{eq:positive} is finite, otherwise there is nothing to prove. Then, we may safely restrict the infimum on the left-hand side to $\tilde \cD_K(b) := \tilde \cD(b) \cap \{\xi \in \tilde \cX\colon \tilde J(\xi)\le K\}$ for some $K>0$. By Proposition~\ref{prop:restGamma-lsc}, $\tilde \cD_K(b)$ is a closed set. Moreover, $(\tilde \cX, \bfD)$ is a compact metric space, therefore there exists a minimizer, further denoted by $\xi$, for the function $\tilde J$ on the compact set $\tilde\cD_K(b) \cap \cU(\fm_b)^c$. We distinguish between cases according to the number of elements in $\xi$.\\
{\bf \noindent Case 0.} Assume that $\xi = \emptyset$. Then, $\tilde\Gamma(\xi)=\gk>b$, hence $\xi \notin \tilde\cD(b)$, which contradicts our assumption.\\
{\bf \noindent Case 1.} Assume that $\xi$ has a single element, i.e.\ $\xi = \{\tilde\ga\}$ for some sub-probability measure $\ga$. Necessarily, $\ga(\dd x) = \phi^2(x)\dd x$ for some $\phi\in H^1(\bbR^d)$, otherwise $\tilde J(\xi) = +\infty$. There are then two further subcases:\\
{\bf \noindent Case 1a.}
If $\int \phi^2 = 1$, then by Theorem~\ref{thm:1} (uniqueness of the minimizer among the set of probability measures modulo space shifts), $\tilde J(\xi) > \tilde J(\fm_b)$, which closes this case.\\
{\bf \noindent Case 1b.}
Assume now that $\int \phi^2 \in (0,1)$. Using Lemma~\ref{lem:scaling} and arguing as in~\cite[Proof of Lemma 12]{BBH2001}, one may check that
\beq
\label{eq:cstr}
\int (1- e^{-\gk \phi^2}) + \kappa\Big(1-{\int} \phi^2\Big) = b,
\eeq
which we may rewrite as
\beq
\label{eq:cstr-bis}
\int (e^{-\kappa \phi^2} - 1 + \kappa \phi^2) = \kappa - b.
\eeq
If there exists another element $\xi'\in\tilde\cD(b)$ such that $\tilde J(\xi')< \tilde J(\xi)$ then obviously the infimum of $\tilde J$ over $\tilde \cD(b)$ is strictly smaller than $\tilde J(\xi)$ and there is nothing more to prove. Therefore, we may assume from now on that $\phi$ is a local minimizer of $\|\nabla\phi\|_2$ under the constraint in~\eqref{eq:cstr-bis}, and write the associated Euler-Lagrange equation.
\begin{itemize}
\item In the case $d\in\{3,4\}$, we may use~\cite[Lemma 14]{BBH2001} to get that $\int \phi^2 = +\infty$, which contradicts our assumption.
\item In the case $d\ge 5$, let us define $u := b/\kappa \in (0,1)$ and $\psi^2(x):= \kappa \phi^2(\kappa (1-u)^{1/d} x)$. Then, by a straightforward change of variable, $\int \psi^2 = (1-u)^{-1}\int \phi^2 < (1-u)^{-1}$ and $\psi$ is a minimizer of $(1-u)^{1-2/d}\|\nabla\psi\|_2^2$ under the constraint $\int e^{-\psi^2}-1+\psi^2 = 1$. Arguing as in~\cite[Proof of Theorem 5(ii), Item 1]{BBH2001}, we obtain that $\|\psi\|_2^2 \ge (1-u_d^*)^{-1}$, which leads to a contradiction when $u\le u_d^*$, i.e. $b\le u_d^*\kappa$.
\end{itemize}
{\bf \noindent Case 2.}  Finally, let us assume that $\xi$ contains at least two elements, i.e. (i) at least two distinct elements, or (ii) at least one element with multiplicity at least two. Let us denote them by $\tilde \ga_1$ and $\tilde \ga_2$. In the sequel we pick $\ga_i(\dd x) = \phi_i^2(x)\dd x$, with $\phi_i \in H^1(\bbR^d)$ for $i\in\{1,2\}$, two elements of $\tilde \ga_1$ and $\tilde \ga_2$ such that $\min(\phi_1^2, \phi_2^2)>0$ on a set of positive measure. Consider $\ga_3 := \ga_1 + \ga_2$, and
\beq
\xi' := \{\tilde\ga_3,\, \xi\setminus\{\tilde\ga_1, \tilde \ga_2\}\}.
\eeq
By the convexity inequality for $J$, see e.g.~\cite[Theorem~7.8]{LiebLoss}, $J(\ga_3) \le J(\ga_1) + J(\ga_2)$, hence $\tilde J(\xi') \le \tilde J(\xi)$. To complete the argument, let us first notice that for every $u,v\in[0,1]$, $1- uv \le (1-u) + (1-v)$, with the inequality being strict as soon as $\max(u,v)<1$. By our choice of $\ga_1$ and $\ga_2$, this yields
\beq
\int 1 - e^{-\kappa(\phi_1^2 + \phi_2^2)} <
\int (1 - e^{-\kappa \phi_1^2}) +
\int (1 - e^{-\kappa \phi_2^2}),
\eeq
hence $\tilde\Gamma(\xi')< \tilde\Gamma(\xi)\le b$. Let us now define $\ga_4(\dd x) := a^{d}\ga_3(a\, \dd x)$ and $\xi'' := \{\ga_4, \xi'\setminus\{\ga_3\}\}$. By using Lemma~\ref{lem:scaling} and choosing $a\in (0,1)$ close enough to one, we obtain $\tilde\Gamma(\xi'')\le b$ and $\tilde J(\xi'')<\tilde J(\xi') \le \tilde J(\xi)$, which completes the proof.
\end{proof}
 The reader may check that the above proof actually yields the following:
\begin{corollary}
	\label{cor:unique}
Under the same assumptions as in Theorem~\ref{thm:2}, $\fm_b$ is the unique minimizer of the rate function $\tilde J$ in $\tilde\cD(b)$.
\end{corollary}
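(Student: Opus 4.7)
The plan is to revisit the case analysis in the proof of Lemma~\ref{lem:positive} and observe that, once the exclusion $\xi \notin \cU(\fm_b)$ is removed, the very same arguments yield the desired uniqueness statement. Existence of a minimizer is already clear from $\fm_b \in \tilde\cD(b)$ and $\tilde J(\fm_b) = I(b)$, so it suffices to show that any minimizer $\xi$ of $\tilde J$ on $\tilde\cD(b)$ must coincide with $\fm_b$.

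First I would rule out the two ``degenerate'' configurations. The empty collection is excluded because $\tilde\Gamma(\emptyset) = \kappa > b$. If $\xi$ contains at least two (possibly repeated) orbits $\tilde\ga_1, \tilde\ga_2$, then the merging step of Case~2 in the proof of Lemma~\ref{lem:positive} produces some $\xi'$ with $\tilde J(\xi') \le \tilde J(\xi)$ by the Lieb--Loss convexity of $J$, and with $\tilde\Gamma(\xi') < \tilde\Gamma(\xi) \le b$ since $1 - uv < (1-u) + (1-v)$ whenever $\max(u,v) < 1$ and one can choose representatives of $\tilde\ga_1$ and $\tilde\ga_2$ overlapping on a set of positive measure. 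A small dilation provided by Lemma~\ref{lem:scaling} then converts this strict slack in the constraint into a strict decrease of $\tilde J$ while preserving membership in $\tilde\cD(b)$, contradicting the minimality of $\xi$.

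Only the single-orbit case $\xi = \{\tilde\ga\}$ with $\ga(\dd x) = \phi^2(x)\,\dd x$ and $\phi \in H^1(\bbR^d)$ remains. If $\|\phi\|_2 = 1$, then $\phi \in \cD(b)$ and $J(\phi) = I(b)$, so $\phi$ is a genuine minimizer of the variational problem~\eqref{def:Ifunction}, and Theorem~\ref{thm:1} forces $\tilde\ga = \tilde{\phi_b^2}$, i.e.\ $\xi = \fm_b$. If instead $\|\phi\|_2 \in (0,1)$, the saturation identity~\eqref{eq:cstr} and its rewriting~\eqref{eq:cstr-bis} reduce the question to whether $\phi$ can be a local minimizer of $\|\nabla\phi\|_2$ under~\eqref{eq:cstr-bis}; this is ruled out by~\cite[Lemma~14]{BBH2001} when $d\in\{3,4\}$ and by the lower bound $\|\psi\|_2^2 \ge (1-u_d^*)^{-1}$ extracted from~\cite[Theorem~5(ii)]{BBH2001} when $d\ge 5$ and $b \le \tfrac{2}{d}\kappa \le u_d^*\kappa$. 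The main obstacle is precisely this single-orbit sub-probability case, since it is the only place where the specific restrictions on $b$ enter through the external results of~\cite{BBH2001}; all the other steps are direct transcriptions of arguments already present in the proof of Lemma~\ref{lem:positive}.
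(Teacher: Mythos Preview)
Your proposal is correct and is precisely the argument the paper intends: the corollary is stated immediately after Lemma~\ref{lem:positive} with the remark that ``the above proof actually yields'' it, and your case-by-case rerun of that proof (empty collection, single orbit with full or deficient mass, multiple orbits) is exactly the verification left to the reader. The only minor imprecision is the sentence ``Existence of a minimizer is already clear from $\fm_b \in \tilde\cD(b)$ and $\tilde J(\fm_b) = I(b)$'': strictly speaking this only shows the infimum is finite, and existence of a minimizer uses the compactness/lower semi-continuity argument at the start of the proof of Lemma~\ref{lem:positive} (now applied to $\tilde\cD_K(b)$ without excising $\cU(\fm_b)$), but this is implicit in your reference to that proof.
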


We may now prove Theorem~\ref{thm:2}. Let $\cU(\fm_b)$ be an open neighbourhood of $\fm_b$ with respect to the $\bfD$-topology. By Lemma~\ref{lem:positive}, the quantity
\beq
\gd := \inftwo{\xi \in \tilde \cD(b)}{\xi \notin \cU(\fm_b)} \tilde J(\xi)
-\inf_{\xi \in \tilde\cD(b)} \tilde J(\xi)
\eeq
is (strictly) positive. By Proposition~\ref{pr:LDP-measure}, there exists $\gep_0$ such that, for all $\gep \in (0,\gep_0)$,
\beq
\limsup_{n\to\infty} n^{\frac2d -1}
\log \bP(\tilde L_{M,\gep} \notin \cU(\fm_b), R_n\le bn) \le 
- { \frac 1d}\inftwo{\xi \in \tilde \cD(b)}{\xi \notin \cU(\fm_b)} \tilde J(\xi) + \frac{\gd}{2{ d}}.
\eeq
 Using Theorem~\ref{thm:MDP-rw} and the fact that the infimum of $J$ over $\cD(b)$ coincides with the infimum of $\tilde J$ over $\tilde \cD(b)$ (Corollary~\ref{cor:unique}) we obtain
\beq
\limsup_{n\to\infty} n^{\frac2d -1}
\log \bP(\tilde L_{M,\gep}\notin \cU(\fm_b) | R_n\le bn) \le - { \frac 1d}\inftwo{\xi \in \tilde \cD(b)}{\xi \notin \cU(\fm_b)} \tilde J(\xi) + { \frac 1d}\inf_{\xi \in\tilde\cD(b)}\tilde J(\xi) + \frac{\delta}{2{ d}},
\eeq
which is less that ${ -\gd/(2d)}<0$.
This completes the proof of Theorem~\ref{thm:2}.

\section{Proof of Proposition~\ref{pr:LDP-measure}: Large deviation upper bound}
\label{sec:ldp}
This section is devoted to the proof of the large deviation upper bound in the $\bfD$-topology, which is key to obtain Theorem~\ref{thm:2}. In Section~\ref{sec:cond-range} we first reduce the deviations for the volume of the range of the random walk to the deviations of its expectation conditioned to the skeleton and show that the conditional expectation may be expressed as a certain functional of the skeleton pair empirical measure. That first step closely follows~\cite{Phetpradap, BBH2001}, the only difference being that we do not fold the random walk on a torus. Our compactification method rather relies on the use of the compact metric space $(\tilde \cX, \bfD)$ instead of the more standard weak topology on the space of probability measures. To be more precise, an adaptation of the $\bfD$-topology is needed to obtain a large deviation principle for the {\it pair} empirical measure. This will be provided in Section~\ref{sec:pairLDP}, using previous work of the authors~\cite{EP23}. The continuity properties that are needed to apply the standard contraction principle to the relevant functionals are given in Section~\ref{sec:lsc}. The main differences with~\cite{Phetpradap, BBH2001} will be discussed at the beginning of that section. All these ingredients will be combined in Section~\ref{sec:conclusion-proof} to finally prove Proposition~\ref{pr:LDP-measure}. Remaining sections contain the deferred proofs of more technical and lengthy lemmas.
\subsection{Approximation of the conditional range}
\label{sec:cond-range}
The very first step in our way to~Proposition~\ref{pr:LDP-measure} is the use of the following concentration inequality, adapted from~\cite[Proposition 2.2.2]{Phetpradap}.
\begin{proposition}[Concentration inequality]
\label{pr:conc.ineq}
For all $\gd>0$,
\beq
\lim_{\gep\to 0} \limsup_{n\to\infty} n^{\frac2d -1} \log
\bP(|R_n - \bE(R_n | \hat S)| \ge \gd n) = -\infty.
\eeq
\end{proposition}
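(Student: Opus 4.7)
The plan is to adapt the argument of~\cite[Proposition 2.2.2]{Phetpradap} via a direct application of McDiarmid's bounded-differences inequality under the conditional measure $\bP(\,\cdot\,|\,\hat S)$. First I would observe that, conditionally on the skeleton $\hat S = (S_0, S_\ell, \ldots, S_{M\ell})$, the random walk path decomposes into $M$ \emph{independent} bridges $\omega_1, \ldots, \omega_M$, the $i$-th running from $S_{(i-1)\ell}$ to $S_{i\ell}$ in $\ell$ steps. Denoting $R^{(i)}$ the set of sites visited by $\omega_i$, we may write the range as the deterministic, measurable function
\beq
R_n = F(\omega_1, \ldots, \omega_M) := \Big|\bigcup_{i=1}^M R^{(i)}\Big|
\eeq
of these conditionally independent bridges.

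Next I would identify the bounded-differences constants. The key point is that every bridge visits at most $\ell + 1$ distinct sites, so that resampling $\omega_i$ (with all other bridges and $\hat S$ frozen) affects $R_n$ by at most
\beq
\big||(B\cup R^{(i)})| - |(B \cup R^{(i)\prime})|\big| \le |R^{(i)} \triangle R^{(i)\prime}| \le |R^{(i)}| + |R^{(i)\prime}| \le 2(\ell+1),
\eeq
where $B = \bigcup_{j\ne i} R^{(j)}$. Hence McDiarmid's inequality applied under $\bP(\,\cdot\,|\,\hat S)$ yields, almost surely,
\beq
\bP(|R_n - \bE(R_n|\hat S)| \ge \gd n \,|\, \hat S) \le 2\exp\Big(-\frac{\gd^2 n^2}{2 M (\ell+1)^2}\Big),
\eeq
and integrating over $\hat S$ preserves this bound for the unconditional probability.

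The computation is then purely asymptotic. Since $M \sim \gep^{-1} n^{1-2/d}$ and $\ell \sim \gep n^{2/d}$, we have $M(\ell+1)^2 \sim \gep\, n^{1+2/d}$, so that
\beq
n^{\frac 2d -1}\log \bP(|R_n - \bE(R_n|\hat S)| \ge \gd n) \le -\frac{\gd^2}{2\gep}\,(1+o(1)),
\qquad n\to\infty.
\eeq
Taking $\limsup_{n\to\infty}$ and then $\gep\to 0$ sends the right-hand side to $-\infty$, which is the claim.

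The proof is conceptually short and I do not expect any serious obstacle; the only point that requires care is the bounded-differences estimate, where one must exploit the set-theoretic identity above rather than the naive Lipschitz bound $|R_n| \le n$ (which would be far too weak). An essentially equivalent alternative would be to apply Azuma's inequality to the Doob martingale $\bE(R_n | \hat S, \omega_1, \ldots, \omega_i)$; both routes rely on the same $O(\ell)$-Lipschitz dependence of the range on each block.
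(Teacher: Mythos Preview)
Your argument is correct and is genuinely simpler than the route the paper takes. The paper defers to~\cite[Proposition 2.2.2]{Phetpradap}, which in turn adapts the argument of~\cite{BBH2001} for the Wiener sausage: that proof passes through a Talagrand-type concentration inequality around the \emph{median}, together with a truncation step (the objects $\hat{\cR}_{n,\gep}^K$, $\cR_{n,\gep}^K$, $m_{n,\gep}^K$ appearing in the appendix reflect this structure). Your direct application of McDiarmid exploits two features specific to the random walk setting that the Wiener sausage does not enjoy: the bridges are not only conditionally independent but each one visits a \emph{deterministically} bounded number of sites, namely at most $\ell+1$. This yields uniform bounded-differences constants and makes the median/truncation machinery unnecessary. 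The exponent $-\gd^2/(2\gep)$ then falls out in one line. The Talagrand route is more robust (it is what one needs when the per-block contribution is only stochastically bounded, as for Brownian bridges), but here your shortcut is available and cleaner. One cosmetic remark: your bound $2(\ell+1)$ on the oscillation can in fact be sharpened to $\ell$, since $|B\cup R^{(i)}|-|B\cup R^{(i)\prime}| = |R^{(i)}\setminus B| - |R^{(i)\prime}\setminus B|$ and each term is at most $\ell$; this only improves the constant and does not affect the conclusion.
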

As a consequence, we may restrict our attention to the conditional expectation of the volume of the range. The only difference with~\cite[Proposition 2.2.2]{Phetpradap} is that the random walk has not been folded on the torus. Hence only mild modifications are needed to get Proposition~\ref{pr:conc.ineq}. Those are deferred to Appendix~\ref{sec:app-con-ine}.

The next step is to express the conditional expectation as a function of the skeleton empirical measure. Although this step follows exactly~\cite{Phetpradap, BBH2001}, we shortly reproduce the computation here, for the reader's convenience and to make the paper more self-contained. More notation is needed beforehand: we introduce the auxiliary functions
\beq
q_{\ell,\gL}(x,y) := \bP_x(S_{(0,\ell]} \cap \gL \neq \emptyset	 | S_\ell = y),
\qquad x,y\in \bbZ^d,
\qquad \gL \subset \bbZ^d,
\eeq
\beq
\bar q_{\ell,\gL}(x,y) := q_{\ell,\gL}(\lfloor xn^{1/d}\rfloor,\lfloor yn^{1/d}\rfloor),
\qquad x,y\in \bbR^d,
\qquad \gL \subset \bbZ^d,
\eeq
and when $\gL=\gL(n,z)$ is the singleton $\{\lfloor zn^{1/d}\rfloor\}$ ($z\in\bbR^d$) we define
\beq
\bar q_{\ell}(x,y;z) := \bar q_{\ell,\gL(n,z)}(x,y),
\qquad x,y\in \bbR^d.
\eeq
Let us now define
\beq
\frak{q}_{\gep, n, \gL}(x,y):= -M \log[1-\bar q_{\ell, \gL}(x, y)], \qquad x,y\in \bbR^d\,,
\eeq
where we remind the reader of the definition of $M$ in~\eqref{eq:M}.
Let $z\in\bbR^d$. When $\gL=\gL(n,z)$ is the singleton $\{\lfloor zn^{1/d}\rfloor\}$, we define
\beq
\label{def:frakq}
\frak{q}_{\gep,n}(x,y;z) := \frak{q}_{\gep,n,\gL(n,z)}(x,y)
\qquad x,y\in \bbR^d. 
\eeq
With all this notation in hand, we may finally write:
\beq
\label{eq:exp-cond-range}
\ba
\frac1n \bE(R_n | \hat S) &= \frac1n \sum_{z\in\bbZ^d} [1-\bP(z\notin S_{(0,n]} | \hat S)]\\
&= \frac1n \sum_{z\in\bbZ^d} \Big[ 1- \prod_{1\le i \le M} \bP(z\notin S_{((i-1)\ell, i\ell]} | \hat S)\Big]\\
&=\frac1n \sum_{z\in\bbZ^d} \Big[1-  \prod_{1\le i \le M} [1-q_{\ell,\{z\}}(\hat S_{i-1}^{(\gep)} n^{1/d}, \hat S_i^{(\gep)} n^{1/d})] \Big]\\
&= \int_{\bbR^d} \dd z\Big[ 1- \prod_{1\le i \le M}[ 1- \bar q_{\ell}(\hat S_{i-1}^{(\gep)}, \hat S_i^{(\gep)} ; z)]\Big]\\
&=\int_{\bbR^d} \dd z
\Big[
1- \exp\Big(- \int \frak{q}_{\gep,n}(x,y;z) L_{M,\gep}^{(2)}(\dd x, \dd y)\Big)
\Big].
\ea
\eeq

In what follows, we replace the function $\frak{q}_{\gep,n}$ by a function that does not depend on $n$ anymore, see Proposition~\ref{pr:app-cond-range} below. First, let us define for $x,y\in\bbR^d$,
\beq\label{def:varphi}
\varphi_\gep(x,y) := \int_0^\gep \dd s \frac{p_{s/d}(-x)p_{(\gep-s)/d}(y)}{p_{\gep/d}(y-x)}.
\eeq
where $p_s$ denotes the Brownian heat kernel, i.e.\
\beq
\label{eq:Bheatk}
p_s(x) = \frac{1}{(2\pi s)^{d/2}}\exp\Big(-\frac{|x|^2}{2s}\Big),
\qquad x\in\bbR^d,
\eeq
and for all $\mu\in \cM_1(\bbR^d\times\bbR^d)$, 
\beq
\label{eq:def-phi-infty-rho}
\phi_{\infty,\gep}(\mu)
:=\int  \dd z
\Big(1 - \exp\Big\{-\frac{\gk_d}{\gep} \int \mu(\dd x, \dd y) \varphi_\gep(x-z,y-z)\Big\}\Big).
\eeq
Furthermore we shall restrict the pair empirical measure to a certain subset of $\cM_1(\bbR^d\times \bbR^d)$ on which we control the maximal distance run by the random walk along each ``edge" of the skeleton. Namely, we define 
\beq
\cM(A,\gep_0) :=
\{\mu\in\cM_1(\bbR^d\times \bbR^d) \colon
\mu(\{(x,y)\colon |x-y| \ge A\}) \le \gep_0\}, \qquad
A,\gep_0 >0.
\eeq
This restriction is harmless due to the following:
\begin{proposition}
\label{pr:restricy-PEM}
Let $\gep, \gep_0 >0$ and $b\in(0,\gk)$ be fixed. There exists $A>0$ such that
\beq
\lim_{n\to \infty} \bP(L_{M,\gep}^{(2)} \notin \cM(A,\gep_0)  | R_n \le bn) = 0.
\eeq
\end{proposition}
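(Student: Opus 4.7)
The plan is to decouple the conditioning: express the conditional probability as a ratio and bound the numerator unconditionally, exploiting the sharp lower bound on $\bP(R_n\le bn)$ supplied by Theorem~\ref{thm:MDP-rw}. The bad event is exactly $\{Z_n > \gep_0 M\}$, where
\[
Z_n := \#\{1 \le i \le M \colon |S_{i\ell}-S_{(i-1)\ell}| \ge A n^{1/d}\}
\]
counts the skeleton edges that are too long. Since Theorem~\ref{thm:MDP-rw} gives $\bP(R_n \le bn) \ge \exp(-(\tfrac1d I(b)+\eta)n^{1-2/d})$ for arbitrary $\eta>0$ and $n$ large, it suffices to exhibit, for $A$ chosen large enough in terms of $\gep,\gep_0,b$, an upper bound on the unconditional probability $\bP(Z_n > \gep_0 M)$ that decays strictly faster than this lower bound.

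The first step is a Gaussian-type tail estimate for the i.i.d.\ increments $X_i := S_{i\ell}-S_{(i-1)\ell}$. Each $X_i$ is a sum of $\ell$ bounded martingale differences, so a standard exponential Chebyshev/Hoeffding bound supplies constants $C,c_0>0$ with
\[
p_n(A) := \bP(|X_1| \ge A n^{1/d}) \le C \exp\bigl(-c_0 A^2 n^{2/d}/\ell\bigr) \le C \exp\bigl(-c_0 A^2/(2\gep)\bigr)
\]
for all $n$ large enough. The second step is a binomial Chernoff bound: since $Z_n$ is a sum of $M$ i.i.d.\ Bernoulli variables with parameter $p_n(A)$, Stirling's formula yields
\[
\bP(Z_n \ge \gep_0 M) \le \binom{M}{\lceil \gep_0 M\rceil} p_n(A)^{\lceil \gep_0 M\rceil} \le \exp\bigl(M[H(\gep_0) + \gep_0 \log p_n(A)]\bigr),
\]
with $H(\gep_0) = -\gep_0\log\gep_0 - (1-\gep_0)\log(1-\gep_0)$. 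Using $M \sim n^{1-2/d}/\gep$, the right-hand side is bounded above by $\exp\bigl(-n^{1-2/d}\gep^{-1}[c_0 \gep_0 A^2/(2\gep) - H(\gep_0) - O(1)]\bigr)$, which is smaller than $\exp(-(\tfrac1d I(b)+2\eta) n^{1-2/d})$ as soon as $A$ is chosen large enough in terms of $\gep, \gep_0, b$. Dividing by the lower bound on $\bP(R_n\le bn)$ yields the claimed convergence to zero.

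The whole argument is essentially a matching of scales: each block has duration $\ell = \Theta(\gep n^{2/d})$, so a typical displacement is of order $\sqrt\gep\, n^{1/d}$, and forcing a displacement of order $A n^{1/d}$ costs a deviation of $A/\sqrt\gep$ standard deviations — squarely subgaussian once $A \gg \sqrt\gep$. Because the resulting exponential cost scales like $A^2/\gep^2$ per unit of $n^{1-2/d}$ while the conditioning cost $I(b)/d$ is $n$-independent, a finite $A = A(\gep, \gep_0, b)$ always suffices. I do not foresee a conceptual obstacle; the only point that requires care is ensuring that the subgaussian tail for $|X_1|$ is uniform in $n$, which follows from Hoeffding applied to the i.i.d.\ unit-modulus steps of $S$, and that $A$ must be chosen \emph{after} $\gep$ and $\gep_0$ are fixed so that the bracketed expression above remains positive uniformly in $n$.
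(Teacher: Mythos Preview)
Your proof is correct and follows essentially the same approach as the paper: bound the conditional probability by the ratio of the unconditional probability of the bad event to $\bP(R_n\le bn)$, control the numerator via a tail bound on the block displacements combined with a binomial large-deviation estimate, and finally choose $A$ large enough to beat the cost $\tfrac1d I(b)$ coming from Theorem~\ref{thm:MDP-rw}. The only cosmetic difference is that the paper phrases the tail bound abstractly (for any $C$ there exists $A$ with $p_n(A)\le \gep_0/C$) and then invokes a Chernoff bound for $\bin(M,\gep_0/C)$, whereas you quantify $p_n(A)$ explicitly via Hoeffding; both routes are equivalent.
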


\begin{proof}[Proof of Proposition~\ref{pr:restricy-PEM}]
We use the rough upper bound
\beq
\label{eq:restricy-PEM}
\bP(L_{M,\gep}^{(2)} \notin \cM(A,\gep_0)  | R_n \le bn) \le
\frac{\bP(L_{M,\gep}^{(2)} \notin \cM(A,\gep_0))}{\bP(R_n \le bn)}.
\eeq
Note that
\beq
\bP(L_{M,\gep}^{(2)} \notin \cM(A,\gep_0))\leq \bP(\card\{0<i\le M \colon |S_{i\ell} - S_{(i-1)\ell}| \ge A\sqrt{\tfrac{\ell}{\gep}}\} > \gep_0 M).
\eeq
Notice that for each $C>0$ (to be determined later) there exists $A$ such that
\beq
\sup_{\ell \ge 1}\bP\Big(S_\ell \ge A\sqrt{\tfrac{\ell}{\gep}}\Big) \le \frac{\gep_0}{C}.
\eeq
For such value of $A$ we get the binomial estimate:
\beq
\bP(L_{M,\gep}^{(2)} \notin \cM(A,\gep_0))
\le \bP(\bin(M, \gep_0/C) \ge \gep_0M).
\eeq
By a standard Large Deviation estimate (see Lemma~\ref{lem:LDest}), we get (recall that $\gep_0$ is fixed)
\beq
\bP(L_{M,\gep}^{(2)} \notin \cM(A,\gep_0))
\le \exp(-[1+o(1)] \gep_0\Xi(C)\tfrac 1\gep n^{1-2/d}),
\eeq
where $\lim_{C\to \infty}\Xi(C) = +\infty$ and the $o(1)$ holds as $n\to \infty$. In view of~\eqref{eq:restricy-PEM} and Theorem~\ref{thm:MDP-rw}, it is enough to choose $C$ large enough such that $\gep_0\Xi(C) > (\gep/d) I(b)$, and then pick $A$ accordingly, in order to conclude.
\end{proof}
We may finally state the main result of this subsection. The proof is deferred to~Appendix~\ref{sec:app-proof-cond}, due to its length.
\begin{proposition}\label{pr:app-cond-range} Let $\gep>0$. For all $\gep_0, A >0$, there exists $n_0(\gep,\gep_0, A)$ such that for all $n\ge n_0(\gep,\gep_0, A)$,
\beq
\tfrac1n \bE(R_n|\hat S) \ge \phi_{\infty,\gep}(L_{M,\gep}^{(2)})-\gep_0,
\eeq
on the event $L_{M,\gep}^{(2)}\in \cM(A,\gep_0)$.
\end{proposition}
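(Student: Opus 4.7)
The plan is to compare termwise the exact expression~\eqref{eq:exp-cond-range} with the limiting functional~\eqref{eq:def-phi-infty-rho}. Since $u\mapsto 1-e^{-u}$ is increasing on $[0,\infty)$, it is enough to prove a lower bound
\[ \int \frak q_{\gep,n}(x,y;z)\,L_{M,\gep}^{(2)}(\dd x,\dd y) \ge \frac{\gk_d}{\gep}\int \varphi_\gep(x-z,y-z)\,L_{M,\gep}^{(2)}(\dd x,\dd y) - r_n(z), \]
with $\int r_n(z)\,\dd z\le \gep_0$ for $n$ large, uniformly on the event $\{L_{M,\gep}^{(2)}\in \cM(A,\gep_0)\}$. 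The elementary inequality $-\log(1-u)\ge u$ replaces $\frak q_{\gep,n}(x,y;z)$ by $M\,\bar q_\ell(x,y;z)$, so the whole problem reduces to a sharp uniform lower bound on the single-site bridge hitting probability $\bar q_\ell$.

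The asymptotics of $\bar q_\ell$ rest on two applications of the Gnedenko local CLT. First, decomposing at $\tau_z$ via the strong Markov property gives
\[ \bar q_\ell(x,y;z)\,\bP_x(S_\ell=y)=\sum_{j=1}^{\ell}\bP_x(\tau_z=j)\,\bP_z(S_{\ell-j}=y), \]
while an analogous expansion of $\bE_x[L_z^\ell\mathbf 1_{\{S_\ell=y\}}]$ inserts an extra factor $\bE_z[L_z^{\ell-j}\mid S_{\ell-j}=y]$ into each summand. By the local CLT and $\bE_z[L_z^\infty]=1/\gk_d$, this factor tends to $1/\gk_d$ uniformly for bridges compatible with $\cM(A,\gep_0)$; consequently $\bar q_\ell(x,y;z)\sim \gk_d\,\bE_x[L_z^\ell\mid S_\ell=y]$. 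A second application of the local CLT together with a Riemann-sum argument yields
\[ \bE_x[L_z^\ell\mid S_\ell=y]\sim \int_0^\ell \frac{p_{s/d}(z-x)\,p_{(\ell-s)/d}(y-z)}{p_{\ell/d}(y-x)}\,\dd s, \]
and the heat-kernel scaling $p_{\gep u\, n^{2/d}/d}(x n^{1/d})=n^{-1}p_{\gep u/d}(x)$, combined with $M=n/\ell$, turns $M\,\bar q_\ell(x,y;z)$ into $(\gk_d/\gep)\varphi_\gep(x-z,y-z)$ up to a multiplicative factor $1-o(1)$ as $n\to\infty$.

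The main obstacle is uniformity of these estimates over $(x,y,z)$ in a form compatible with the outer $z$-integration. The cut-off $\cM(A,\gep_0)$ bounds $|y-x|\le A$ in the rescaled coordinates, so the Brownian denominator $p_{\gep/d}(y-x)$ stays bounded below and the local CLT error is controlled; the exceptional $L_{M,\gep}^{(2)}$-mass where this fails contributes at most $\gep_0$ to the bound. For $|z|$ large, both $M\bar q_\ell(x,y;z)$ and $\varphi_\gep(x-z,y-z)$ decay in a Gaussian fashion, so their $z$-integrals concentrate on a compact set, and a radius $R=R(A,\gep,\gep_0)$ can be fixed so that the tail is absorbed into~$\gep_0$. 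A slightly delicate extra point is the range where $\tau_z$ lies close to $\ell$ in the decomposition above, which has to be handled by time-reversing the bridge so that the Green's function bound applies symmetrically at both endpoints. Once these errors are accumulated, the pointwise lower bound survives the $z$-integration with total discrepancy at most $\gep_0$, proving the proposition for $n\ge n_0(\gep,\gep_0,A)$.
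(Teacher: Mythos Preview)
Your overall strategy coincides with the paper's: reduce via $-\log(1-u)\ge u$, then show $M\bar q_\ell(x,y;z)\to (\gk_d/\gep)\varphi_\gep(x-z,y-z)$ through the local CLT and a Riemann-sum approximation, with the outer $z$-integral controlled by Gaussian tails. The paper carries this out in Appendix~B with essentially the same architecture, so the broad outline is correct.

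There is, however, a genuine gap in your uniformity claim. You write that $\bE_z[L_z^{\ell-j}\mid S_{\ell-j}=y]\to 1/\gk_d$ ``uniformly for bridges compatible with $\cM(A,\gep_0)$''. But membership in $\cM(A,\gep_0)$ only bounds the \emph{rescaled} increment $|x-y|$; it says nothing about the \emph{lattice} distance $|y_n-z_n|$ or $|x_n-z_n|$. Since $z$ is integrated over all of $\bbR^d$, this includes $z$ within distance $O(n^{-1/d})$ of $x$ or $y$, where $|x_n-z_n|$ or $|y_n-z_n|$ is $O(1)$ in lattice units. For such $z$ the conditioned local time at $z_n$ does not converge to $1/\gk_d$ (the bridge endpoint sits right at the target), and the local-CLT Riemann approximation to $\bE_x[L_z^\ell\mid S_\ell=y]$ likewise fails near the boundary $j\approx 0$. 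The time-reversal you invoke handles only $j\approx \ell$ and transfers the singularity to $|x_n-z_n|$ small, which is equally uncontrolled.

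The paper's remedy is a spatial cutoff at lattice scale: one first truncates $\bar q_\ell$ to $\bar q_\ell^{Kn^{-1/d}}$, setting it to zero when $z$ is within $K$ lattice sites of $x_n$ or $y_n$ (Step~2), shows the excised $z$-mass is $O(\gep_0)$ once $K$ is fixed (Step~3), and then \emph{with $|u|,|v|\ge K$ guaranteed} decomposes on the first exit of $\partial B_K$ to extract $\bP_0(H_{\partial B_K}<H_0)$ in place of $\gk_d$ (Steps~4a--4c). The error from reinserting the return event, controlled in Lemma~\ref{lem:return-origin}, is precisely where the condition $|u|\wedge|v|\ge K$ is used to get a factor $K^{2-d}$. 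A separate time-truncation to $[\gep_0\ell,(1-\gep_0)\ell]$ (Step~4b) then makes the local-CLT denominators uniformly bounded below before the Riemann-sum passage. Without these two cutoffs your sketch does not close; adding them would bring it in line with the paper's argument.
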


\begin{remark}
Proposition~\ref{pr:app-cond-range} partially translates~\cite[Lemma 2.2.10 (a)]{Phetpradap} to the infinite volume case. However, our proof will deviate from the one in~\cite{Phetpradap} exactly to avoid issues coming from the fact that the random walk considered here is on $\R^d$ and not on a torus.
\end{remark}
Let us end this section with an observation that shall become useful later on
\begin{lemma}
\label{lem:phi.q.prob}
For all $x,y\in\bbR^d$ and $n\in\bbN$, $z\in\bbR^d\mapsto \frac1\gep \varphi_\gep(x-z,y-z)$ is the density of a probability measure on $\bbR^d$ and $z\in\bbR^d \mapsto \frac 1\gep n^{1-2/d} \bar q_\ell(x,y;z)$ is a sub-probability measure.
\end{lemma}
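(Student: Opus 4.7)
My plan is to verify each of the two assertions by direct computation; both reduce to classical identities (the Chapman--Kolmogorov property for the Gaussian semigroup, and the definition of the range as a sum of indicator functions).

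For assertion (1), non-negativity of $\varphi_\gep$ is immediate from \eqref{def:varphi}, so the task is to prove that $\int_{\bbR^d} \varphi_\gep(x-z,y-z)\,\dd z = \gep$. Using the evenness of the heat kernel, $p_{s/d}(-(x-z)) = p_{s/d}(z-x)$. Since the integrand is non-negative, Tonelli allows me to swap the $\dd s$ and $\dd z$ integrals:
\beq
\int_{\bbR^d} \varphi_\gep(x-z,y-z)\,\dd z
= \int_0^\gep \frac{\dd s}{p_{\gep/d}(y-x)}
\int_{\bbR^d} p_{s/d}(z-x)\, p_{(\gep-s)/d}(y-z)\,\dd z.
\eeq
The inner $\dd z$-integral is a convolution of two Gaussian densities, hence by Chapman--Kolmogorov it equals $p_{s/d+(\gep-s)/d}(y-x) = p_{\gep/d}(y-x)$. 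The quotient is therefore identically $1$, and integrating over $s\in(0,\gep)$ yields $\gep$, as required.

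For assertion (2), I will use that $z\mapsto \bar q_\ell(x,y;z)$ is, by definition, constant on each cube $C_w := \{z\in \bbR^d \colon \lfloor z n^{1/d}\rfloor = w\}$ for $w\in \bbZ^d$, and each such cube has Lebesgue volume $n^{-1}$. Therefore
\beq
\int_{\bbR^d} \bar q_\ell(x,y;z)\, \dd z
= \frac{1}{n} \sum_{w \in \bbZ^d} q_{\ell,\{w\}}(\lfloor xn^{1/d}\rfloor, \lfloor yn^{1/d}\rfloor).
\eeq
The key observation is that, by the defining formula for $q_{\ell,\gL}$ and the linearity of conditional expectation,
\beq
\sum_{w\in \bbZ^d} q_{\ell,\{w\}}(x',y')
= \bE_{x'}\Big(\sum_{w\in \bbZ^d} \one\{w\in S_{(0,\ell]}\} \,\Big|\, S_\ell = y'\Big)
= \bE_{x'}(R_\ell \mid S_\ell = y') \le \ell,
\eeq
since the range up to time $\ell$ is deterministically bounded by $\ell$. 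Combining this with $\ell = \lfloor \gep n^{2/d}\rfloor \le \gep n^{2/d}$ gives $\int \bar q_\ell(x,y;z)\,\dd z \le \gep n^{2/d-1}$, so multiplying by $\gep^{-1} n^{1-2/d}$ produces total mass at most $1$. Non-negativity is clear from the probabilistic definition, which concludes the proof.

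There is no serious obstacle here: both statements are essentially book-keeping identities, part~(1) being an instance of Chapman--Kolmogorov and part~(2) an instance of the Campbell-type formula $\sum_w \bP(w\in S_{(0,\ell]}) = \bE(R_\ell)$ together with the trivial bound $R_\ell \le \ell$. The only point requiring a small amount of care is the piecewise-constant structure of $\bar q_\ell(x,y;\cdot)$, which ensures the integral over $\bbR^d$ is exactly a Riemann sum with weights $n^{-1}$.
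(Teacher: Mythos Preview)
Your proof is correct and follows essentially the same route as the paper: Chapman--Kolmogorov for the first assertion, and for the second the conversion of the $\dd z$-integral into a lattice sum (via the piecewise-constant structure on cubes of volume $n^{-1}$) followed by the bound $\sum_{w}\bP(w\in S_{(0,\ell]}\mid S_\ell=y')=\bE(R_\ell\mid S_\ell=y')\le\ell$. The only cosmetic difference is that the paper writes this last bound through the union bound $\one\{w\in S_{(0,\ell]}\}\le\sum_{1\le i\le\ell}\one\{S_i=w\}$ rather than invoking the conditional expected range directly; your formulation is slightly cleaner but equivalent.
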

\begin{proof}[Proof of Lemma~\ref{lem:phi.q.prob}]
The first part of the statement is straightforward since for all $s\in (0,\gep)$,
\beq
\label{eq:phi.q.prob}
\int \dd z\ p_{s/d}(z-x) p_{(\gep-s)/d}(y-z) = p_{\gep/d}(y-x),
\eeq
by the Chapman-Kolmogorov equation. For the second part of the statement, we write
\beq
\ba
n^{1-2/d}\int \dd z\ \bar q_\ell(x,y;z) 
&= n^{1-2/d}\int \dd z\ 
\bP_{\lfloor x n^{1/d}\rfloor}(S_{(0,\ell]}\cap \{\lfloor z n^{1/d}\rfloor\}\neq \emptyset | S_\ell = \lfloor yn^{1/d}\rfloor)\\
&= n^{-2/d} \sum_{z\in\bbZ^d}
\bP_{\lfloor x n^{1/d}\rfloor}(S_{(0,\ell]}\cap \{z\}\neq \emptyset | S_\ell = \lfloor yn^{1/d}\rfloor)\\
&\le n^{-2/d} \sum_{z\in\bbZ^d} \sum_{1\le i \le \ell} \bP_{\lfloor x n^{1/d}\rfloor}(S_i = z | S_\ell = \lfloor yn^{1/d}\rfloor) = n^{-2/d} \ell = \gep.
\ea
\eeq
\end{proof}
%
%%%%%%%%%%%%%%%%%%%%%%%%%%%%%%%%%%%%
%%%%%%%%%%%%%%%%%%%%%%%%%%%%%%%%%%%%
%

\subsection{Large deviation principle for pair empirical measures modulo shifts} 
\label{sec:pairLDP}
With Proposition~\ref{pr:app-cond-range} in hand it is only natural to apply a large deviation principle for the skeleton {\it pair} empirical measure. The Mukherjee-Varadhan topology introduced in Section~\ref{sec:compact-topo0} was adapted in~\cite{EP23} to allow strong large deviation principles for {\it pair} empirical measures. In this section we explain the modifications needed to obtain this topology from the one in Section~\ref{sec:compact-topo0}.
\noindent
Let $\cM_1^{(2)} := \cM_1(\bbR^d\times \bbR^d)$ be the space of probability measures on $\bbR^d\times \bbR^d$ and $\cM_{\le1}^{(2)} := \cM_{\le1}(\bbR^d\times \bbR^d)$ be the space of sub-probability measures on $\bbR^d\times \bbR^d$. We consider the action of the shifts $\gt_{x,x}$ for $x\in\bbR^d$, defined by:
\beq
\int_{\bbR^d\times \bbR^d} f(u,v) (\gt_{x,x} \nu)(\dd u,\dd v) = \int_{\bbR^d\times \bbR^d} f(u+x,v+x)  \nu(\dd u,\dd v)
\eeq
for all continuous bounded functions $f\colon \bbR^d\times \bbR^d \mapsto \bbR$ and $\nu\in \cM_{\le1}^{(2)}$. We shall denote by $\tilde \cM_1^{(2)}$ (resp. $\tilde \cM_{\le 1}^{(2)}$) the space of equivalence classes of $\cM_1^{(2)}$ (resp. $\cM_{\le 1}^{(2)}$) under the collection of shifts $\gt_{x,x}$. For any $\alpha\in \cM_1^{(2)}$ we denote by $\tilde\alpha$ its orbit, i.e., equivalence class. Recall the definition of $\cF_k$ in Section~\ref{sec:compact-topo0}. For $k\ge 1$, $f\in \cF_{2k}$ and $\ga \in \cM_{\le 1}^{(2)}$, we write
\beq
\label{eq:defLambda}
\gL(f, \ga) := \int f(u_1, v_1,\ldots, u_{k}, v_{k}) \prod_{1\le i\le k} \ga(\dd u_i, \dd v_i),
\eeq
which only depends on the orbit $\tilde \ga$. This time, we define
\beq
\cF^{(2)} := \bigcup_{k\ge 2} \cF_{2k},
\eeq
for which there exists a countable dense set (under the uniform metric) denoted by
\beq
\{f^{(2)}_r(u_1, v_1,\ldots, u_{k_r}, v_{k_r})\colon r\in \bbN\},
\eeq
see~\cite[Section 2.2]{MV2016}. We define
\beq
\tilde\cX^{(2)} := \Big\{
\xi = \{\tilde \ga_i\}_{i\in I} \colon \ga_i \in \cM_{\le 1}^{(2)},\ \sum_{i\in I} \ga_i(\bbR^d \times \bbR^d) \le 1\Big\},
\eeq
where $I$ may be empty, finite or countable. For any $\xi_1, \xi_2\in \tilde \cX^{(2)}$, define
\beq
\mathbf{D}_2(\xi_1, \xi_2) := \sum_{r\ge 1} \frac{1}{2^r} \frac{1}{1+\|f^{(2)}_r\|_{\infty}}
\Big|%
\sum_{\tilde\ga\in\xi_1} \gL(f^{(2)}_r, \ga)- \sum_{\tilde\ga\in\xi_2} \gL(f^{(2)}_r, \ga)
\Big|.%
\eeq
It was then shown in~\cite{EP23} that $\tilde\cX^{(2)}$ equipped with $\mathbf{D}_2$ is a compact metric space, and that $\tilde \cM^{(2)}$ is dense in $\tilde\cX^{(2)}$.\\

\par In what follows, $(\pi_\gep)$ denotes the Brownian semigroup, i.e.\ for $\gep>0$ and $x\in\bbR^d$,
\beq
\pi_\gep(x,\dd y) := p_\gep(x,y)\dd y, \qquad p_\gep(x,y):=p_\gep(y-x),
\eeq
(recall~\eqref{eq:Bheatk}) and for all $\ga\in\cM_{\le 1}(\bbR^d)$ we write
\beq
(\ga \otimes \pi_\gep)(\dd x, \dd y):= \ga(\dd x)\pi_\gep(x,y) \in \cM_{\le 1}(\bbR^d \times \bbR^d).
\eeq
We denote by $h$ the relative entropy, defined by
\begin{equation}
h(\mu|\nu)= 
\begin{cases}
\int \log\Big(\frac{\dd \mu}{\dd \nu}\Big) \dd \mu & \textrm{if } \mu \ll \nu;\\
+\infty& \textrm{else;}
\end{cases}
\qquad \mu, \nu\in \cM_{\leq 1}(\bbR^d \times \bbR^d).
\end{equation}
If $\xi = \{\tilde \ga _i\}_{i\in I}\in \tilde\cX^{(2)}$, we denote by $\ga_{i,1}$ and $\ga_{i,2}$ the projections of $\ga_i$ (that is an arbitrary element of the orbit $\tilde \ga_i$) onto the first and last $d$ coordinates respectively. Then, we define, for every $\gep>0$:
\beq\label{eq:ratefct}
\tilde J_\gep^{(2)}(\xi) := \sum_{i\in I} h(\ga_i | \ga_{i,1} \otimes \pi_\gep)
\eeq
if $\ga_{i,1}=\ga_{i,2}$ for all $i\in I$, and $\tilde J^{(2)}(\xi) := +\infty$ otherwise.
The following result is key to our analysis:
\begin{proposition}[Pair empirical LDP upper bound, Theorem 7.2 in~\cite{EP23}]
\label{pr:LDPpair}
For any closed set $F$ in $\tilde\cX^{(2)}$,
\beq
\limsup_{n\to\infty}{n^{\frac2d -1}}\log \bP(\tilde{L}_{M,\gep}^{(2)}\in F )
\leq -\inf_{\xi\in F}\frac{1}{\gep} \tilde J^{(2)}_{ \gep/d} (\xi)\,.
\eeq
\end{proposition}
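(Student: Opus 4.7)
The statement is a Donsker--Varadhan type LDP upper bound for the skeleton's pair empirical measure, but transferred from the weak topology on $\cM_1^{(2)}$ to the Mukherjee--Varadhan compactification $(\tilde\cX^{(2)},\bfD_2)$. The heuristic is that the rescaled skeleton step $\hat S^{(\gep)}_i - \hat S^{(\gep)}_{i-1} = n^{-1/d} S_\ell$ has, by the local CLT, an asymptotic Gaussian law $\pi_{\gep/d}$ (since $\sqrt{\ell/d}/n^{1/d} \to \sqrt{\gep/d}$), so the skeleton behaves like a Markov chain with transition kernel $\pi_{\gep/d}$. At speed $M\sim n^{1-2/d}/\gep$ the natural candidate rate function is the Markov chain relative entropy $h(\alpha\mid\alpha_1\otimes\pi_{\gep/d})$; converting from speed $M$ to speed $n^{1-2/d}$ produces the prefactor $1/\gep$ appearing in the conclusion.

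\textbf{Main steps.} First I would prove a ``weak-topology'' version of the upper bound on $\cM_1^{(2)}$ by Varadhan-style exponential tilting: for a shift-invariant absolutely continuous $\alpha$, tilt the law of the skeleton by a bounded approximation of the Radon--Nikodym derivative $\dd\alpha/\dd(\alpha_{1}\otimes\pi_{\gep/d})$, use a local CLT to replace the true one-step kernel of $\hat S^{(\gep)}$ by $\pi_{\gep/d}$ up to subexponential corrections, and apply Chebyshev's inequality; this yields $\bP(L^{(2)}_{M,\gep}\in\cU(\alpha)) \le \exp(-M[h(\alpha|\alpha_1\otimes\pi_{\gep/d})-o(1)])$ for small enough weak neighbourhoods $\cU(\alpha)$. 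Second, I would upgrade to $\tilde\cX^{(2)}$: by compactness of $(\tilde\cX^{(2)},\bfD_2)$ and a finite-cover argument, it suffices to prove the analogous bound around a single orbit collection $\xi=\{\tilde\alpha_i\}_{i\in I}\in F$. The definition of the $\bfD_2$-topology forces any configuration close to $\xi$ to split the skeleton's $M$ time indices into disjoint groups $G_i$ such that, after an appropriate spatial shift, the sub-empirical measure indexed by $G_i$ is weakly close to $\alpha_i$. I would then apply the weak-topology bound per cluster and sum, obtaining the total rate $\sum_i h(\alpha_i|\alpha_{i,1}\otimes\pi_{\gep/d}) = \tilde J^{(2)}_{\gep/d}(\xi)$. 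Third, I would conclude by letting the neighbourhoods shrink and using lower semi-continuity of $\tilde J^{(2)}_{\gep/d}$ to pass from $\inf$ over a small ball to $\inf_F$.

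\textbf{Main obstacle.} The delicate step is the transfer from the weak to the $\bfD_2$-topology. One must quantify the combinatorial overhead coming from all the ways of apportioning the $M$ skeleton steps into spatial clusters, control the ``interstitial'' travel time between clusters so that it is either absorbed into one $\tilde\alpha_i$ or negligible on the entropy scale, and handle components of vanishing mass so that they contribute neither to $\tilde J^{(2)}_{\gep/d}$ nor to the exponential cost. This is achieved via Gaussian tail estimates for $\pi_{\gep/d}$ combined with the super-additivity of relative entropy across disjoint parts of the state space, and is where the technical work of \cite{EP23} is concentrated; the abstract machinery developed there is what makes the present proof ultimately fit into a single invocation of Proposition~\ref{pr:LDPpair}.
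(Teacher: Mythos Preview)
The paper does not prove this proposition at all: it is imported verbatim as Theorem~7.2 of~\cite{EP23} and used as a black box. So there is no ``paper's own proof'' to compare your proposal against.

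That said, your sketch is a reasonable high-level outline of the Mukherjee--Varadhan strategy (weak LDP by exponential tilting, then compactification via cluster decomposition), and you correctly flag that the real work lies in the passage from the weak topology to $(\tilde\cX^{(2)},\bfD_2)$ and that this is precisely what~\cite{EP23} carries out. One point to be careful about in your Step~2: the ``splitting of the $M$ time indices into disjoint groups $G_i$'' is not quite how the argument runs. The $\bfD_2$-topology is defined through test functions in $\cF^{(2)}$ acting on the \emph{measure}, not through a temporal decomposition of the trajectory; the cluster decomposition is spatial (widely separated lumps of mass) rather than a partition of time indices, and the super-additivity of relative entropy is exploited at the level of sub-probability measures, not sub-trajectories. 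Your heuristic conflates the two pictures, which happens to give the right rate function but would not survive a rigorous implementation without the measure-theoretic machinery of~\cite{MV2016,EP23}.
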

For the rest of the paper, let us give ourselves a slightly more convenient notation for projections. For all $\mu\in\cM_{\le 1}(\bbR^d\times \bbR^d)$, we define
\beq
\tilde\Pr(\tilde\mu)= \{\Pr(\mu)*\delta_x\colon\, x\in \R^d\}\,,
\eeq
where $\Pr$ denotes the usual projection onto the first $d$ coordinates of a sub-probability measure in $\R^d\times\R^d$. Given $\xi=\{\tilde{\alpha_i}\}_{i\in I}\in \tilde\cX^{(2)}$, we define 
\beq
\tilde\Pr(\xi)= \{\tilde\Pr(\tilde\alpha_{i})\}_{i\in I}\,.
\eeq

\begin{lemma}
\label{lem:continuity-pi}
The mapping $\tilde\Pr$ is continuous from $(\tilde \cX^{(2)},\bfD_2)$ to $(\tilde \cX,\bfD)$.
\end{lemma}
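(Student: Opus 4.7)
The plan is to characterize $\bfD$-convergence via test-function integrals and lift every such test to one accessible to the $\bfD_2$-topology. Since $\bfD$-convergence of $\tilde\Pr(\xi^{(n)})$ is equivalent to convergence of every functional of the form $\eta = \{\tilde\beta_i\}_{i\in I}\mapsto \sum_i \Lambda(f,\beta_i)$ with $f\in\cF_k$ for some $k\ge 2$, it suffices to show that, for any such $f$, the composition
\beq
\xi = \{\tilde\alpha_i\}_{i\in I}\ \longmapsto\ \sum_{i\in I}\Lambda(f, \Pr(\alpha_i))
\eeq
is continuous on $(\tilde\cX^{(2)}, \bfD_2)$. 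The naive lift $\tilde f(u_1, v_1,\ldots, u_k, v_k) := f(u_1,\ldots, u_k)$ is translation-invariant under the diagonal action on $(\bbR^d)^{2k}$ but fails to vanish at infinity, since the $v$-coordinates can escape while the $u$-coordinates remain bounded; hence $\tilde f\notin\cF^{(2)}$.

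To remedy this, I would introduce a continuous cutoff $\chi_R\colon\bbR^d\to[0,1]$ equal to $1$ on $\bar\cB(0,R)$ and $0$ outside $\bar\cB(0,2R)$, and set
\beq
g_R(u_1, v_1,\ldots, u_k, v_k) := f(u_1,\ldots, u_k)\prod_{j=1}^k \chi_R(v_j - u_j).
\eeq
A triangle-inequality check confirms that $g_R\in\cF_{2k}\subset\cF^{(2)}$: if $\max_{i\neq j}|w_i - w_j|\to\infty$ among the $2k$-tuple $w = (u_1, v_1,\ldots, u_k, v_k)$, then either some $|u_i - u_j|$ blows up (forcing $f\to 0$), or, using $|v_i - v_j|\le |v_i - u_i| + |u_i - u_j| + |u_j - v_j|$, some $|v_j - u_j|$ blows up (whence $\chi_R(v_j - u_j) = 0$). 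Consequently, for every fixed $R>0$, the $\bfD_2$-convergence $\xi^{(n)}\to\xi$ gives
\beq
\sum_i \Lambda(g_R,\alpha_i^{(n)})\ \longrightarrow\ \sum_i \Lambda(g_R,\alpha_i),\qquad n\to\infty.
\eeq

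To conclude, I would let $R\to\infty$. Since $g_R\nearrow\tilde f$ pointwise, $|g_R|\le\|f\|_\infty$, and $\sum_i m_i^k\le 1$ with $m_i := \alpha_i(\bbR^d\times\bbR^d)$, dominated convergence yields $\sum_i \Lambda(g_R,\alpha_i)\to \sum_i \Lambda(f,\Pr(\alpha_i))$ as $R\to\infty$ for each fixed $\xi$, and analogously for each fixed $n$. The main obstacle is to justify the interchange of the two limits $n\to\infty$ and $R\to\infty$. The approximation error at level $R$ is controlled by
\beq
\sum_i\bigl|\Lambda(g_R,\alpha_i)-\Lambda(\tilde f,\alpha_i)\bigr|\le k\|f\|_\infty \sum_i m_i^{k-1}\,\alpha_i\bigl(\{(u,v)\colon |v-u|>R\}\bigr),
\eeq
so the interchange is legitimate as soon as this expression is uniformly small in $\xi^{(n)}$ for large $R$. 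I would establish this uniform tightness of the diagonal displacement by testing the $\bfD_2$-convergence against auxiliary $\cF_4$-functions tailored to capture mass at $|v-u|$ large, for instance of the form $\bigl(\chi_{R'}(v_1-u_1)-\chi_R(v_1-u_1)\bigr)\,g(u_1-u_2, v_1-v_2)$ with $g$ compactly supported (which one checks lies in $\cF_4$ by the same triangle-inequality argument). Combined with the preceding display, this yields the required continuity.
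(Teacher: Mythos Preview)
You correctly observe that the naive lift $\tilde f$ fails to lie in $\cF_{2k}$ because it does not vanish when only the $v$-coordinates escape. It is worth noting that the paper's own proof takes exactly this naive route, asserting that the constant-in-$v$ extension $\hat f$ belongs to $\cF_{2k}$; so the issue you flag is real, and the paper's argument shares the same defect.

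Your cutoff repair via $g_R$ is the natural idea, but the interchange of limits cannot be carried out in general, and in fact the lemma as stated is false. Consider $\xi^{(n)} = \{\tilde\delta_{(0,ne_1)}\}\in\tilde\cX^{(2)}$. For any $f\in\cF_{2k}$ with $k\ge 2$ one has $\Lambda(f,\delta_{(0,ne_1)}) = f(0,ne_1,\ldots,0,ne_1)\to 0$, since $\max_{i\neq j}|w_i-w_j|\ge |u_1-v_1|=n\to\infty$; hence $\xi^{(n)}\to\emptyset$ in $\bfD_2$. Yet $\tilde\Pr(\xi^{(n)}) = \{\tilde\delta_0\}$ for every $n$, whereas $\tilde\Pr(\emptyset)=\emptyset$, and these differ in $(\tilde\cX,\bfD)$ (test against $f(u_1,u_2)=e^{-|u_1-u_2|^2}$). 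Your proposed $\cF_4$-tests cannot rescue uniform tightness: in this example \emph{every} $\bfD_2$-admissible functional vanishes in the limit, so the topology simply does not see the mass sitting at $|v-u|=n$. The statement becomes true upon restricting to sequences with uniformly tight displacement $v-u$ (for instance inside $\cM(A,\gep_0)$, or on level sets of $\tilde J^{(2)}_{\gep/d}$, which is where the lemma is actually applied), and on such a domain your cutoff argument goes through cleanly.
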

\begin{proof}[Proof of Lemma~\ref{lem:continuity-pi}]
Suppose that $\{\xi_n\}_{n\in\N}$ is a sequence of elements in $\tilde\cX^{(2)}$ that converges to $\xi\in\tilde \cX^{(2)}$ for the $\mathbf{D}_2$ metric. Consider $f\in\cF_k$. Then we can write $f(u_1, \ldots, u_k) = \hat f (u_1, v_1, \ldots, u_k, v_k)$ where $\hat f \in \cF_{2k}$ is constant along the $v$-variables. Thus,
\beq
\ba
\sum_{\tilde{\ga}\in\xi}\int f(u_1, \ldots, u_k) \prod_{1\le i\le k} (\Pr\ga)(\dd u_i)
=& \sum_{\tilde{\ga}\in\xi}\int \hat f(u_1, v_1 \ldots, u_k, v_k) \prod_{1\le i\le k} \ga(\dd u_i, \dd v_i)\\
= &\lim_{n\to\infty}\sum_{\tilde{\ga}\in\xi_n} \int \hat f(u_1, v_1 \ldots, u_k, v_k) \prod_{1\le i\le k} \ga(\dd u_i, \dd v_i)\\
= &\lim_{n\to\infty} \sum_{\tilde{\ga}\in\xi_n}\int f(u_1,\ldots, u_k) \prod_{1\le i\le k} (\Pr\ga)(\dd u_i).
\ea
\eeq
The definition of the respective metrics, together with the fact that $\tilde\Pr(\xi)=\{\tilde\Pr(\tilde\ga_i)\}_{i\in I}$, yield the claim.
\end{proof}
\subsection{Lower semi-continuity}
\label{sec:lsc}
The goal of this section is two-fold. Firstly, we provide the minimal continuity properties later required to apply the contraction principle to the relevant functional of the skeleton empirical measure. Here, continuity is meant for the $\bfD$ and $\bfD_2$ topologies, hence extra work is needed compared to~\cite{Phetpradap, BBH2001}. Secondly, we provide a series of approximations that bridge the gap between the functional appearing in Proposition~\ref{pr:app-cond-range} (that is $\phi_{\infty,\gep}$) and the one appearing in Theorem~\ref{thm:2} (that is $\tilde \Gamma$). Adjustments from~\cite{Phetpradap, BBH2001} are required.
\par Let us first extend $\phi_{\infty,\gep}$ to $\tilde \cX^{(2)}$. Since $\phi_{\infty,\gep}$ is well-defined for sub-probability measures on $\bbR^d \times \bbR^d$ and is invariant by the shifts $(\gt_{x,x})_{x\in\R^d}$ (recall the definition from Section~\ref{sec:pairLDP}), we may define for every $\xi = \{\tilde \ga_i\}_{i\in I}\in \tilde\cX^{(2)}$,
\beq
 \tilde \phi_{\infty,\gep}(\xi) := \sum_{i\in I} \phi_{\infty,\gep}(\ga_i) + \kappa \Big(1-\sum_{i\in I} \ga_i(\bbR^d\times\bbR^d)\Big).
\eeq
\begin{lemma}
\label{lem:lsc-phi}
The mapping $\tilde \phi_{\infty,\gep}$ is lower-semi continuous with respect to the metric $\bfD_2$.
\end{lemma}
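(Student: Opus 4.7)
The plan is to prove lower semicontinuity in two stages: first establish weak lower semicontinuity of $\phi_{\infty,\gep}$ on $\cM_{\le 1}^{(2)}$, and then lift to $\bfD_2$ by exploiting the concentration-compactness structure of the Mukherjee-Varadhan compactification.

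For the first stage, I observe that $\varphi_\gep$, defined in~\eqref{def:varphi} as an integral over $s\in(0,\gep)$ of the nonnegative continuous integrand $p_{s/d}(-u)p_{(\gep-s)/d}(v)/p_{\gep/d}(v-u)$, is nonnegative and lower semicontinuous on $\bbR^d\times\bbR^d$ with possible value $+\infty$. Hence for each fixed $z$ the map $(x,y)\mapsto\varphi_\gep(x-z,y-z)$ is LSC and nonnegative, and Portmanteau (applied to its bounded lower truncations) yields $\liminf_n f_{\mu_n}(z)\ge f_\mu(z)$ whenever $\mu_n\rightharpoonup\mu$ weakly in $\cM_{\le 1}^{(2)}$. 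Two successive applications of Fatou (first pointwise in $z$, then in $dz$) then give $\phi_{\infty,\gep}(\mu)\le\liminf_n\phi_{\infty,\gep}(\mu_n)$.

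For the second stage, I take $\xi_n\to\xi=\{\tilde\ga_i\}_{i\in I}$ in $\bfD_2$ and pass to a subsequence attaining the $\liminf$ of $\tilde\phi_{\infty,\gep}(\xi_n)$. Using the concentration-compactness structure of the Mukherjee-Varadhan compactification (cf.\ Section~\ref{sec:pairLDP}), for every truncation level $K<\infty$ one decomposes representatives of $\xi_n$ as $\hat\ga_{1,n}+\cdots+\hat\ga_{K,n}+\rho_n$ with centering shifts $x_n^{(i)}\in\bbR^d$ satisfying $|x_n^{(i)}-x_n^{(j)}|\to\infty$ for $i\ne j$, such that $\gt_{x_n^{(i)},x_n^{(i)}}\hat\ga_{i,n}\rightharpoonup\ga_i$ weakly in $\cM_{\le 1}^{(2)}$, while the residual $\rho_n$ satisfies $(\gt_{x,x}\rho_n)(C)\to 0$ uniformly in $x\in\bbR^d$ for every compact $C\subset\bbR^d\times\bbR^d$. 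Choosing $D_n\to\infty$ with $D_n\ll\min_{i\ne j}|x_n^{(i)}-x_n^{(j)}|$ and localizing the $z$-integral to the balls $A_{i,n}:=B(x_n^{(i)},D_n)$, the nonnegativity of $f$ and the decay of $\varphi_\gep$ give $f_{\textrm{rep}}\ge f_{\hat\ga_{i,n}}$ on $A_{i,n}$ and $f_{\textrm{rep}}\ge f_{\rho_n}$ outside, whence
\[\tilde\phi_{\infty,\gep}(\xi_n)\ge\sum_{i\le K}\phi_{\infty,\gep}(\hat\ga_{i,n})+\int_{(\cup_i A_{i,n})^c}(1-e^{-f_{\rho_n}(z)})dz+o(1).\]
The first sum has $\liminf\ge\sum_{i\le K}\phi_{\infty,\gep}(\ga_i)$ by Stage~1 together with shift invariance of $\phi_{\infty,\gep}$. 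For the residual integral, I expand $1-e^{-t}=t-g(t)$ with $g(t):=t-1+e^{-t}\in[0,t^2/2]$: the linear part yields $\int f_{\rho_n}(z)dz=\kappa\,\rho_n(\bbR^d\times\bbR^d)$, which in the limit equals $\kappa(1-\sum_{i\le K}\ga_i(\bbR^d\times\bbR^d))$, exactly matching (after $K\to\infty$) the mass-deficit term in $\tilde\phi_{\infty,\gep}(\xi)$.

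The principal obstacle is showing that the quadratic correction $\int g(f_{\rho_n})dz\le\tfrac12\int f_{\rho_n}^2dz$ vanishes along the chosen sub-sequence. This $L^2$-quantity rewrites as $\tfrac12(\kappa/\gep)^2$ times the $\rho_n\otimes\rho_n$-integral of the diagonally shift-invariant kernel $K_\gep(x,y,x',y'):=\int\varphi_\gep(x-z,y-z)\varphi_\gep(x'-z,y'-z)dz$, which vanishes as all pairwise distances among $(x,y,x',y')$ tend to infinity. Since $\varphi_\gep$ is only locally $L^1$, the kernel $K_\gep$ may blow up on a diagonal set; this is circumvented by truncating $\varphi_\gep$ at height $C$ and exploiting the normalization $\int\varphi_\gep(\cdot-z,\cdot-z)dz=\gep$ to dominate the $(\varphi_\gep-C)_+$ tail in $L^1$. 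The truncated quadratic integral vanishes thanks to the dispersion of $\rho_n$ (via the vanishing-at-infinity property of the truncated kernel within the MV framework), while the $L^1$-controlled tail is absorbed into the linear accounting upon letting $C\to\infty$ and then $K\to\infty$, closing the estimate.
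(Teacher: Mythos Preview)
Your concentration--compactness route is genuinely different from the paper's argument, and the high-level structure is sound, but several steps are not closed. The paper avoids concentration--compactness entirely: it introduces a \emph{time} truncation $\varphi_{\gep,\eta}(x,y):=\int_\eta^{\gep-\eta}(\cdots)\,ds$, observes that $\tilde\phi_{\infty,\gep}=\sup_{\eta>0}\tilde\phi_{\infty,\gep,\eta}$, and then shows each $\tilde\phi_{\infty,\gep,\eta}$ is actually \emph{continuous} for $\bfD_2$. Continuity is obtained by expanding $1-e^{-t}$ as a power series (legitimate once $\varphi_{\gep,\eta}$ is bounded) and checking that the resulting kernels $V_n^{(\eta)}(x_1,y_1,\dots,x_n,y_n)=\int\prod_i\varphi_{\gep,\eta}(z-x_i,z-y_i)\,dz$ lie in $\cF_{2n}$; the crucial estimate is an explicit decay bound on $\varphi_{\gep,\eta}$ (their Lemma~\ref{lem:uniformvarphi}). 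This is clean because it works directly at the level of the $\bfD_2$ metric and handles multi-orbit $\xi_n$ automatically.

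Your argument has two concrete gaps. First, you implicitly assume each $\xi_n$ is a single orbit so that a representative $\mu_n$ exists; for general $\xi_n\in\tilde\cX^{(2)}$ this fails, and reducing to the single-orbit case (e.g.\ by density of $\tilde\cM_1^{(2)}$) would require exactly the continuity you are trying to prove. Second, and more seriously, the ``quadratic correction'' step is not closed. To get $\int K_\gep^C\,d(\rho_n\otimes\rho_n)\to 0$ via the MV dispersion of $\rho_n$ you need $K_\gep^C\in\cF_4$, i.e.\ continuous and vanishing at infinity---this is essentially the paper's $V_2^{(\eta)}$ analysis and requires decay estimates on the (height-)truncated $\varphi_\gep$ that you have not supplied and that are not softer than Lemma~\ref{lem:uniformvarphi}. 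Moreover, the tail $\int(\varphi_\gep-C)_+(x-z,y-z)\,dz$ depends on $y-x$ and you need it to vanish \emph{uniformly} as $C\to\infty$, which is not established. The paper's time-truncation sidesteps both issues: $\varphi_{\gep,\eta}$ is bounded outright (so no height cut is needed) and the missing time-interval contributes at most $2\kappa\eta/\gep$ to the linear part, uniformly, which is why taking $\sup_\eta$ works cleanly.
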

The proof of Lemma~\ref{lem:lsc-phi}, which is quite long, is deferred to Section~\ref{sec:lem:lsc-phi}.
Let us now define (see (2.89) in~\cite{BBH2001})
\beq
\Psi_{\gep}(\ga) := \int \dd x
\Big(%
1- e^{-\frac{\gk}{\gep}\int_0^\gep \dd s \int p_{ s/d}(x-y)\ga(\dd y)}
\Big),%
\eeq
which is defined for $\ga \in \cM_{\le 1}(\bbR^d)$ and is translation invariant, while for $\xi =\{\tilde\ga_i\}_{i\in I}\in\tilde\cX$, we define
\beq
\tilde\Psi_{\gep}(\xi) = \sum_{i\in I} \Psi_\gep(\ga_i)  + \gk \Big(1- \sum_{i\in I} \ga_i(\bbR^d) \Big)
\,.
\eeq
Recall the definition of $\tilde\Pr$ from Section~\ref{sec:pairLDP}. The following result generalizes~\cite[Lemma~6]{BBH2001}. 
\begin{lemma}
\label{lem:Psieps-to-gamma-tilt}
For all $K>0$,
\beq
\lim_{\gep\to 0}
\sup_{\xi \in \tilde\cX^{(2)}\colon \tilde J^{(2)}_{ \gep/d}(\xi)\le K\gep} 
|\tilde \phi_{\infty,\gep}(\xi)-\tilde\Psi_\gep(\tilde\Pr(\xi))| = 0.
\eeq
\end{lemma}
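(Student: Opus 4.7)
The plan is to observe that $\phi_{\infty,\gep}$ reduces \emph{exactly} to $\Psi_\gep$ when evaluated on a product measure of the form $\beta \otimes \pi_{\gep/d}$, and that each component of $\xi$ is close to such a product in a sense quantified by the entropy bound $\tilde J^{(2)}_{\gep/d}(\xi) \le K\gep$. The essential matching between that $\gep$-scaling and the factor $\gep$ appearing in Lemma~\ref{lem:phi.q.prob} is what forces the difference to vanish.

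First I would establish the exact identity $\phi_{\infty,\gep}(\beta \otimes \pi_{\gep/d}) = \Psi_\gep(\beta)$ for every $\beta \in \cM_{\le 1}(\bbR^d)$. This reduces to computing $\int dy\, p_{\gep/d}(y-x)\, \varphi_\gep(x-z, y-z)$: the factor $p_{\gep/d}(y-x)$ cancels against the denominator in the definition~\eqref{def:varphi} of $\varphi_\gep$, after which Chapman--Kolmogorov (integrating $p_{(\gep-s)/d}$ in $y$) leaves $\int_0^\gep p_{s/d}(x-z)\, ds$. Plugging this into $\phi_{\infty,\gep}(\beta \otimes \pi_{\gep/d})$ and using the symmetry of the heat kernel yields $\Psi_\gep(\beta)$.

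Next, for a general $\mu \in \cM_{\le 1}(\bbR^d \times \bbR^d)$ with first marginal $\beta$, the elementary inequality $|e^{-a} - e^{-b}| \le |a-b|$ for $a,b \ge 0$, together with Fubini--Tonelli and the normalization $\int dz\, \varphi_\gep(x-z, y-z) = \gep$ from Lemma~\ref{lem:phi.q.prob}, yields the pointwise bound
\beq
|\phi_{\infty,\gep}(\mu) - \phi_{\infty,\gep}(\beta \otimes \pi_{\gep/d})| \le \gk\, \|\mu - \beta \otimes \pi_{\gep/d}\|_{\mathrm{TV}}.
\eeq
Since $\mu$ and $\beta \otimes \pi_{\gep/d}$ have the same total mass $a := \mu(\bbR^d \times \bbR^d)$, Pinsker's inequality (applied after renormalizing both to probability measures) gives
\beq
\|\mu - \beta \otimes \pi_{\gep/d}\|_{\mathrm{TV}} \le \sqrt{2 a\, h(\mu | \beta \otimes \pi_{\gep/d})}.
\eeq

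Applied componentwise to $\xi = \{\tilde\ga_i\}_{i\in I}$, the masses $a_i := \ga_i(\bbR^d\times\bbR^d)$ agree with $\ga_{i,1}(\bbR^d)$ under the marginal condition $\ga_{i,1}=\ga_{i,2}$ that is built into the finiteness of $\tilde J^{(2)}_{\gep/d}$, so the residual $\gk$-terms in $\tilde\phi_{\infty,\gep}$ and $\tilde\Psi_\gep$ cancel. Writing $h_i := h(\ga_i | \ga_{i,1} \otimes \pi_{\gep/d})$, the triangle inequality, Cauchy--Schwarz, $\sum_i a_i \le 1$, and $\sum_i h_i = \tilde J^{(2)}_{\gep/d}(\xi) \le K\gep$ combine to give
\beq
|\tilde\phi_{\infty,\gep}(\xi) - \tilde\Psi_\gep(\tilde\Pr(\xi))| \le \gk\sqrt{2}\sum_i \sqrt{a_i h_i} \le \gk\sqrt{2}\sqrt{\big(\textstyle\sum_i a_i\big)\big(\sum_i h_i\big)} \le \gk \sqrt{2 K \gep},
\eeq
uniformly in $\xi$, which vanishes as $\gep \to 0$. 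The only delicate point is verifying that Pinsker applies to sub-probability measures of common mass (handled by normalization) and that the $\gep$-scaling of the entropy budget matches the $\gep$ in $\int dz\,\varphi_\gep=\gep$; this matching is what makes Cauchy--Schwarz yield a $\sqrt{\gep}$ bound rather than an $O(1)$ one.
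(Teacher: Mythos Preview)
Your proof is correct and follows essentially the same approach as the paper: establish the identity $\phi_{\infty,\gep}(\beta\otimes\pi_{\gep/d})=\Psi_\gep(\beta)$, bound the difference by $\gk$ times a total-variation distance via Lemma~\ref{lem:phi.q.prob}, apply Pinsker componentwise after normalization, and close with Cauchy--Schwarz using $\sum_i a_i\le 1$ and $\sum_i h_i\le K\gep$. The paper's proof is terser because it cites \cite[Lemmas~5(e) and~6]{BBH2001} for the identity and the Pinsker step (with a worse constant~$8$ in place of your~$\sqrt{2}$), whereas you have unpacked these explicitly; one minor remark is that the equality $a_i=\ga_{i,1}(\bbR^d)$ holds by definition of the first marginal and does not require the condition $\ga_{i,1}=\ga_{i,2}$.
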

\begin{proof}[Proof of Lemma~\ref{lem:Psieps-to-gamma-tilt}]
Let $\xi =\{\tilde\ga_i\}_{i\in I}\in\tilde\cX^{(2)}$.
We write $\gs_i := \ga_i(\bbR^d\times \bbR^d) = (\Pr(\ga_i)\otimes \pi_{ \gep/d})(\bbR^d\times \bbR^d)$ and $\bar \ga_i = \ga_i / \gs_i$ for all $i\in I$ and $\gep>0$. Using that $\Pr(\ga_i)(\bbR^d)= \ga_i(\bbR^d \times \bbR^d)$ and adapting the proof of~\cite[Lemma 6]{BBH2001}, we get
\beq
|\tilde \phi_{\infty,\gep}(\xi)-\tilde\Psi_\gep(\tilde\Pr(\xi))| \le \gk \sum_{i\in I} \gs_i \|\bar \ga_i - \Pr(\bar \ga_i)\otimes \pi_{ \gep/d}\|_{\tv}.
\eeq
Moreover, by the standard Pinsker inequality~\cite[Lemma 5(e)]{BBH2001}
\beq
\|\bar \ga_i - \Pr(\bar \ga_i)\otimes \pi_{ \gep/d}\|_{\tv} \le 8 \sqrt{J^{(2)}_{ \gep/d}(\bar \ga_i)} = 8\sqrt{\frac{J^{(2)}_{ \gep/d}(\ga_i)}{\gs_i }}.
\eeq
Recall that $\sum_{i\in I} \gs_i \le 1$. By the Cauchy-Schwarz inequality, we obtain
\beq
|\tilde \phi_{\infty,\gep}(\xi)-\tilde\Psi_\gep(\tilde\Pr(\xi))| \le 8\gk \sqrt{\sum_{i\in I} J^{(2)}_{ \gep/d}(\ga_i)} = 8\gk \sqrt{\tilde J^{(2)}_{ \gep/d}(\mu)},
\eeq
which completes the proof.
\end{proof}
Recall the definition of $\tilde\Gamma$ in~\eqref{eq:def-tilde-JG}. We define for any $\xi=\{\tilde\alpha_i\}_{i\in I}\in\tilde\cX$ and $\delta>0$,

\beq
\tilde \Gamma_\delta(\xi):= \sum_{i\in I} \int\dd x \Big(1-e^{-\gk (p_{ \delta/d}* \alpha_i)(x)}\Big) + \gk\Big(1- \sum_{i\in I} \ga_i(\bbR^d) \Big).
\eeq

\begin{lemma}\label{lem:Psieps-to-gamma} For all $K>0$,
	\beq
	\label{eq:Psieps-to-gamma}
	\lim_{\gep \to 0} \sup_{\xi\in \tilde\cX \colon \tilde J_{ \gep/d}(\xi) \le K\gep}
	\Big|\tilde\Gamma(\xi) - \tilde\Psi_\gep(\xi)\Big| =0.
	\eeq
	Moreover, there exists $C\in(0,\infty)$ such that for any $\delta\geq \gep$,
\beq
\label{eq:Psieps-to-gamma2}
\sup_{\xi\in\tilde\cX \colon \tilde{J}_{ \gep/d}(\xi)\leq K\gep}\Big| \tilde\Gamma(\xi)-\tilde\Gamma_\delta(\xi)\Big|\le C\sqrt{\delta}\,.
\eeq	
\end{lemma}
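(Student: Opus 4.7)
The plan is to reduce both statements to a single heat-kernel $L^1$ displacement estimate: for every $\phi\in H^1(\bbR^d)$ and $t>0$,
\[
\|p_t\ast\phi^2-\phi^2\|_{L^1(\bbR^d)}\;\le\;C\sqrt{t}\,\|\phi\|_2\,\|\nabla\phi\|_2.
\]
To prove this I would start from the identity
\[
(p_t\ast\phi^2)(x)-\phi^2(x)=\int p_t(y)\bigl(\phi(x-y)-\phi(x)\bigr)\bigl(\phi(x-y)+\phi(x)\bigr)\,dy,
\]
integrate in $x$, apply Cauchy--Schwarz in $x$ slot by slot, and then use the standard Sobolev displacement inequality $\|\phi(\cdot-y)-\phi\|_2\le|y|\,\|\nabla\phi\|_2$ together with the Gaussian first-moment bound $\int p_t(y)|y|\,dy\le C\sqrt{t}$.

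Now, since $u\mapsto 1-e^{-u}$ is $1$-Lipschitz on $[0,\infty)$, for a single sub-probability $\alpha=\phi^2\,dx$ I obtain
\[
|\Gamma(\alpha)-\Gamma_\delta(\alpha)|\le\kappa\,\|p_{\delta/d}\ast\phi^2-\phi^2\|_{L^1}\le C\kappa\sqrt{\delta}\,\|\phi\|_2\|\nabla\phi\|_2,
\]
and, by Fubini and Jensen applied to the $s$-average inside the exponential,
\[
|\Gamma(\alpha)-\Psi_\epsilon(\alpha)|\le\frac{\kappa}{\epsilon}\int_0^\epsilon\|p_{s/d}\ast\phi^2-\phi^2\|_{L^1}\,ds\le C\kappa\sqrt{\epsilon}\,\|\phi\|_2\|\nabla\phi\|_2.
\]
Writing $\xi=\{\tilde\alpha_i\}_{i\in I}$ with $\alpha_i=\phi_i^2\,dx$, I then sum over $i$; the evanescent-mass terms $\kappa\bigl(1-\sum_i\alpha_i(\bbR^d)\bigr)$ present in $\tilde\Gamma$, $\tilde\Gamma_\delta$ and $\tilde\Psi_\epsilon$ cancel in each difference. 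Cauchy--Schwarz, combined with $\sum_i\|\phi_i\|_2^2=\sum_i\alpha_i(\bbR^d)\le 1$ and $\tilde J(\xi)=\tfrac12\sum_i\|\nabla\phi_i\|_2^2$, yields
\[
\sum_{i\in I}\|\phi_i\|_2\|\nabla\phi_i\|_2\;\le\;\sqrt{2\tilde J(\xi)},
\]
so both inequalities follow provided $\tilde J(\xi)$ is uniformly bounded under the hypothesis $\tilde J_{\epsilon/d}(\xi)\le K\epsilon$.

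The main difficulty lies precisely in this last link: converting a short-time relative-entropy bound against the Brownian semigroup into a Dirichlet (i.e.\ $H^1$) bound. For a smooth probability density this is de Bruijn's identity / the short-time Fisher-information expansion, which gives $\inf_\eta h(\eta\,|\,\alpha\otimes\pi_\epsilon)=\tfrac{\epsilon}{2}\|\nabla\phi\|_2^2+O(\epsilon^2)$, the infimum being over pair measures $\eta$ with both marginals equal to $\alpha=\phi^2\,dx$, so that dividing by $\epsilon$ produces $\tilde J(\xi)\le C K$ uniformly in small $\epsilon$. The hard part is pushing this uniformly over the whole of $\tilde\cX$: sub-probability orbits that may lose mass, infinitely many components $i\in I$, and possibly non-smooth $\phi_i$. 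I would handle this in the same spirit as Lemma~\ref{lem:Psieps-to-gamma-tilt}, via a Pinsker-type quadratic comparison applied component by component and then summed through the Cauchy--Schwarz step above, which is exactly what transfers the entropy hypothesis into the required gradient control and closes the argument.
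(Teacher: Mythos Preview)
Your reduction to the heat-kernel $L^1$ displacement and the Cauchy--Schwarz summation over components are both fine, and they do mirror the paper's structure. The genuine gap is the conversion step you flag yourself: from $\tilde J_{\gep/d}(\xi)\le K\gep$ you want $\tilde J(\xi)\le CK$ uniformly in $\gep$ and $\xi$. This implication is false in general. Spectrally, with $\phi=\sqrt{f}$ one has $L_\gep(\phi)=\int(1-e^{-\gep|\zeta|^2/2})|\hat\phi(\zeta)|^2\,\dd\zeta$, and $L_\gep(\phi)/\gep$ is \emph{non-increasing} in $\gep$ with supremum $\tfrac12\|\nabla\phi\|_2^2$ at $\gep=0$; the semigroup at scale $\gep$ simply does not feel frequencies $|\zeta|\gg\gep^{-1/2}$. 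So one can build $\phi_\gep$ with $J_{\gep/d}(\phi_\gep^2)\le K\gep$ while $\|\nabla\phi_\gep\|_2\to\infty$, and your bound $C\sqrt{\gep}\,\|\phi\|_2\|\nabla\phi\|_2$ gives nothing. The ``de Bruijn / Fisher information'' expansion you invoke is a pointwise-in-$\phi$ statement whose $O(\gep^2)$ remainder depends on higher derivatives and is not uniform over the constraint set; Pinsker applied componentwise, as in Lemma~\ref{lem:Psieps-to-gamma-tilt}, controls total variation by $\sqrt{J_\gep}$, not $\sqrt{\gep J}$, so it does not bridge the gap either.

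The paper sidesteps this entirely by never passing through $\tilde J(\xi)$. It bounds $\|\nu\pi_t-\nu\|_{\tv}$ directly by $\sqrt{J_\gep(\nu)}$: split $\|\nu\pi_s-\nu\|_{\tv}\le\|\nu\pi_s-\nu\pi_{s+\gep}\|_{\tv}+\|\nu\pi_{s+\gep}-\nu\|_{\tv}$, control the first piece by Pinsker, and for the second use the Hellinger distance together with the quadratic form $L_t(\sqrt{f})=\langle\sqrt f,(I-\pi_t)\sqrt f\rangle$, the monotonicity of $t\mapsto L_t(\sqrt f)/t$ (\cite[Theorem 7.10]{LiebLoss}), and the elementary inequality $J_t(\nu)\ge L_t(\sqrt f)$ obtained by plugging $u=\sqrt f$ into the Donsker--Varadhan variational formula. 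This yields $|\tilde\Gamma(\xi)-\tilde\Psi_\gep(\xi)|\le 12\kappa\sqrt{\tilde J_{\gep/d}(\xi)}\le 12\kappa\sqrt{K\gep}$ with no reference to $\|\nabla\phi\|_2$; the second estimate~\eqref{eq:Psieps-to-gamma2} follows from the same monotonicity, $L_\delta(\sqrt f)/\delta\le L_{\gep/d}(\sqrt f)/(\gep/d)\le d\,J_{\gep/d}(\xi)/\gep\le dK$. If you want to keep your displacement estimate, replace $\sqrt t\,\|\nabla\phi\|_2$ by $\sqrt{L_t(\phi)}$ throughout; then everything closes.
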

The proof is deferred to Section~\ref{S:Gamma}. We conclude this section with the following:
\begin{proposition}\label{prop:restGamma-lsc} 
For all $K>0$, the restriction of $\tilde \Gamma$ to $\cA(K) := \{\xi \in \tilde \cX\colon \tilde J(\xi) \le K\}$ is lower semi-continuous. Moreover, for all $\delta>0$, the map $\tilde\Gamma_\delta$ is lower semi-continuous on $\tilde\cX$.
\end{proposition}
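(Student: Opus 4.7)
The plan has two stages. First, I would show that $\tilde\Gamma_\delta$ is in fact continuous on all of $\tilde\cX$ (which is strictly stronger than lsc) by rewriting it as a $\bfD$-continuous series of translation-invariant test-function integrals. Second, I would deduce lower semi-continuity of $\tilde\Gamma|_{\cA(K)}$ from uniform convergence $\tilde\Gamma_\delta\to\tilde\Gamma$ on $\cA(K)$ as $\delta\to 0$, since a uniform limit of continuous functions is continuous.

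For the first stage, I expand $1-e^{-\gk u}=\gk u+\sum_{k\ge 2}\frac{(-1)^{k-1}\gk^k}{k!}u^k$, specialize to $u=(p_{\delta/d}*\ga)(x)$, integrate in $x$ and apply Fubini to obtain
\beq
F_\delta(\ga):=\int_{\bbR^d}\big(1-e^{-\gk(p_{\delta/d}*\ga)(x)}\big)\dd x = \gk\,\ga(\bbR^d)+\sum_{k\ge 2}\frac{(-1)^{k-1}\gk^k}{k!}\gL(g_k^\delta,\ga),
\eeq
where $g_k^\delta(y_1,\ldots,y_k):=\int_{\bbR^d}\prod_{i=1}^{k}p_{\delta/d}(x-y_i)\dd x$. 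A direct Gaussian computation (e.g.\ $g_2^\delta(y_1,y_2)=p_{2\delta/d}(y_1-y_2)$, and in general $g_k^\delta\le\|p_{\delta/d}\|_\infty^{k-2}p_{2\delta/d}(y_i-y_j)$ for any pair $i\neq j$) shows that each $g_k^\delta$ is continuous, translation invariant, and vanishes as $\max_{i\neq j}|y_i-y_j|\to\infty$, hence $g_k^\delta\in\cF_k$. By the very definition of the $\bfD$-topology, for each $k\ge 2$ the map $\xi\mapsto\sum_{\tilde\ga\in\xi}\gL(g_k^\delta,\ga)$ is $\bfD$-continuous on $\tilde\cX$. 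Inserting the expansion into the definition of $\tilde\Gamma_\delta$, the linear-in-mass piece $\gk\sum_i\ga_i(\bbR^d)$ is exactly cancelled by the mass-defect correction $\gk(1-\sum_i\ga_i(\bbR^d))$ built into the definition of $\tilde\Gamma_\delta$, leaving
\beq
\tilde\Gamma_\delta(\xi)=\gk+\sum_{k\ge 2}\frac{(-1)^{k-1}\gk^k}{k!}\sum_{\tilde\ga\in\xi}\gL(g_k^\delta,\ga).
\eeq
The uniform bound $\sum_{\tilde\ga\in\xi}|\gL(g_k^\delta,\ga)|\le\|g_k^\delta\|_\infty=\|p_{\delta/d}\|_k^k$ (using $\sum_i\ga_i(\bbR^d)\le 1$) together with convergence of the numerical series $\sum_{k\ge 2}\gk^k\|p_{\delta/d}\|_k^k/k!$ (ratio test, factorial beating the Gaussian growth) permits the exchange of $\lim_{n\to\infty}$ with the sum over $k$, yielding continuity of $\tilde\Gamma_\delta$ on $\tilde\cX$.

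For the second stage, write $\ga_i=\phi_i^2\dd x$ with $\phi_i\in H^1(\bbR^d)$. The Lipschitz bound $|e^{-\gk a}-e^{-\gk b}|\le\gk|a-b|$ and the standard heat-semigroup modulus of continuity $\|p_t*f-f\|_1\le c_d\sqrt t\,\|\nabla f\|_1$ give
\beq
|\tilde\Gamma_\delta(\xi)-\tilde\Gamma(\xi)|\le\gk\sum_{i\in I}\|p_{\delta/d}*\phi_i^2-\phi_i^2\|_1\le c_d'\gk\sqrt{\delta}\sum_{i\in I}\|\nabla(\phi_i^2)\|_1.
\eeq
Since $\|\nabla(\phi_i^2)\|_1\le 2\|\phi_i\|_2\|\nabla\phi_i\|_2\le 2\sqrt{2\,\ga_i(\bbR^d)\,J(\ga_i)}$, Cauchy--Schwarz applied to $\sum_i\sqrt{\ga_i(\bbR^d)\,J(\ga_i)}$ combined with $\sum_i\ga_i(\bbR^d)\le 1$ and $\tilde J(\xi)\le K$ delivers the uniform estimate $\sup_{\xi\in\cA(K)}|\tilde\Gamma_\delta(\xi)-\tilde\Gamma(\xi)|\le C_d\sqrt{K\delta}$, which tends to $0$ as $\delta\to 0$. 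Thus $\tilde\Gamma|_{\cA(K)}$ is the uniform limit on $\cA(K)$ of $\bfD$-continuous functions, so it is itself continuous (and in particular lsc) on $\cA(K)$.

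The main obstacle is the mass-defect phenomenon intrinsic to the $\bfD$-topology: total mass $\xi\mapsto\sum_i\ga_i(\bbR^d)$ is only upper semi-continuous under $\bfD$-convergence, and hence a naive treatment of $\sum_i F_\delta(\ga_i)$ would only yield upper semi-continuity of $\tilde\Gamma_\delta$. The crucial observation is that the $k=1$ term in the Taylor expansion is exactly $\gk\,\ga(\bbR^d)$ and is precisely cancelled by the $\gk(1-\sum_i\ga_i(\bbR^d))$ summand built into the definition of $\tilde\Gamma_\delta$ in~\eqref{eq:def-tilde-JG}; this cancellation is what turns $\tilde\Gamma_\delta$ into an honest series of $\cF_k$-test-function quantities with $k\ge 2$ and thereby promotes upper semi-continuity to full continuity.
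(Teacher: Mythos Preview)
Your proof is correct and, for the second assertion about $\tilde\Gamma_\delta$, follows the same route as the paper: expand the exponential, observe that the linear-in-mass term is exactly cancelled by the $\gk(1-\sum_i\ga_i(\bbR^d))$ correction, and check that the remaining coefficient functions $g_k^\delta(y_1,\ldots,y_k)=\int\prod_i p_{\delta/d}(x-y_i)\,\dd x$ lie in $\cF_k$. Since $p_{\delta/d}$ is bounded (unlike $\varphi_\gep$ in Lemma~\ref{lem:lsc-phi}), no truncation is needed and the series converges uniformly, so you in fact obtain continuity of $\tilde\Gamma_\delta$ rather than mere lower semi-continuity; the paper states only lsc but its argument yields the same.

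For the first assertion your route differs from the paper's. The paper approximates $\tilde\Gamma$ on $\cA(K)$ by $\tilde\Psi_\gep$, invoking the identity $\Psi_\gep(\mu)=\phi_{\infty,\gep}(\mu\otimes\pi_{\gep/d})$, the lower semi-continuity of $\tilde\phi_{\infty,\gep}$ (Lemma~\ref{lem:lsc-phi}), the continuity of $\xi\mapsto\xi\otimes\pi_{\gep/d}$, and the uniform convergence in Lemma~\ref{lem:Psieps-to-gamma}. You instead approximate $\tilde\Gamma$ directly by $\tilde\Gamma_\delta$, using the elementary heat-semigroup estimate $\|p_t*f-f\|_1\le c_d\sqrt{t}\,\|\nabla f\|_1$ applied to $f=\phi_i^2$, together with $\|\nabla(\phi_i^2)\|_1\le 2\|\phi_i\|_2\|\nabla\phi_i\|_2$ and Cauchy--Schwarz. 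Your approach is shorter, avoids the detour through the pair-measure functional $\tilde\phi_{\infty,\gep}$, and actually delivers \emph{continuity} of $\tilde\Gamma|_{\cA(K)}$ (as a uniform limit of continuous functions), whereas the paper's approximant $\tilde\Psi_\gep$ is only shown to be lsc, yielding only lsc of $\tilde\Gamma|_{\cA(K)}$. The paper's route has the advantage of recycling machinery already needed elsewhere; yours is more self-contained.
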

The proof is deferred to Section~\ref{S:gammalsc}.
\subsection{Conclusion: Proof of Proposition~\ref{pr:LDP-measure}}
\label{sec:conclusion-proof}
Along this section, we shall say that a real-valued sequence $(u_n)$ is \emph{negligible} if for some $c > \frac1d I(b)$ (see Theorem~\ref{thm:MDP-rw}) we have $|u_n| \le \exp(-cn^{1-2/d})$ for all $n$ large enough.
Let $\gep_0>0$. By Proposition~\ref{pr:conc.ineq},
\beq
\bP(R_n\le bn, \tilde L_{M,\gep}\in F) \le
\bP(\bE(R_n | \hat S)\le (b+\gep_0)n, \tilde L_{M,\gep}\in F) +O(e^{-C(\gep,\gep_0)n^{1-2/d}}),
\eeq
where $C(\gep,\gep_0)$ goes to infinity as $\gep$ goes to zero. Hence the second term is \emph{negligible} provided $\gep$ is chosen small enough.
By Proposition~\ref{pr:app-cond-range}, we obtain for all $A>0$ and $n\ge n_0(\gep,\gep_0,A)$,
\beq
\bP(\bE(R_n | \hat S)\le (b+\gep_0)n, \tilde L_{M,\gep}\in F) \le (a) + (b),
\eeq
where
\beq
\ba
{\rm (a)} &= \bP(\phi_{\infty,\gep}(L^{(2)}_{M,\gep})\le b+2\gep_0,\ \tilde L_{M,\gep}\in F),\\
{\rm (b) } &= \bP(L^{(2)}_{M,\gep}\notin \cM(A,\gep_0)).
\ea
\eeq
By Proposition~\ref{pr:restricy-PEM} the term (b) is \emph{negligible} upon choosing $A$ suitable. Let us now focus on (a), which is the main term and which we may write as
\beq
\ba
{\rm (a)} &= \bP(\tilde\phi_{\infty,\gep}(\tilde L^{(2)}_{M,\gep})\le b+2\gep_0,\  \tilde L_{M,\gep}\in F)\\
&= \bP(\tilde\phi_{\infty,\gep}(\tilde L^{(2)}_{M,\gep})\le b+2\gep_0,\ \tilde\Pr(\tilde L^{(2)}_{M,\gep}\in F)\,.
\ea
\eeq
By Lemmas~\ref{lem:continuity-pi} and~\ref{lem:lsc-phi}, the set $\tilde\Pr^{-1}(F)\cap \tilde\phi_{\infty,\gep}^{-1}(-\infty,b+2\gep_0]$ is closed for the $\bfD_2$-topology. Thus, combining this with Proposition~\ref{pr:LDPpair},
%and Lemma~\ref{lem:exp-equiv}
we get
\beq
\ba
\label{eq:UB-LDPpair}
\limsup n^{2/d -1}\log \bP(\tilde\phi_{\infty,\gep}(\tilde L^{(2)}_{M,\gep})\le b+2\gep_0,\ &\tilde\Pr(\tilde L^{(2)}_{M,\gep})\in F)\\ &\le -
\inf_{\xi\in\tilde\Pr^{-1}(F)\cap \tilde\phi_{\infty,\gep}^{-1}(-\infty,b+2\gep_0]} \frac{1}{\gep} \tilde J^{(2)}_{ \gep/d}(\xi)\,.
\ea
\eeq

Define
\beq
\cG_\gep(b):=\{\xi \in \tilde\cX:\, \tilde\Psi_\gep(\xi)\leq b\}.
\eeq
Now, we argue that for $\gep$ sufficiently small
\beq
\inf_{\xi\in\tilde\Pr^{-1}(F)\cap \tilde\phi_{\infty,\gep}^{-1}(-\infty,b+\gep_0]} \frac{1}{\gep} \tilde J^{(2)}_{ \gep/d}(\xi) \ge \inf_{\xi\in\tilde\Pr^{-1}(F\cap\cG_\gep(b+2\gep_0))}\frac{1}{\gep}\tilde J^{(2)}_{ \gep/d}(\xi)
=\inf_{\xi\in F\cap\cG_\gep(b+2\gep_0)}\frac{1}{\gep}\tilde J_{ \gep/d}(\xi).
\eeq
The lower bound is obtained by Lemma~\ref{lem:Psieps-to-gamma-tilt}, while the last equality follows from Lemma~\ref{lem:continuity-pi} and the contraction principle.
Summing up, we have proven so far that for all $\gep$ sufficiently small
\beq\label{eq:summary}
\limsup_{n\to\infty} n^{\frac2d -1} \log \bP(R_n\le bn, \tilde L_{M,\gep}\in F) \le -\inf_{\xi\in F\cap\cG_\gep(b+2\gep_0)}\frac{1}{\gep}\tilde J_{\gep/d}(\xi).
\eeq
We now investigate the limit of the right-hand side as $\gep\to 0$. To that end we first note that if
\beq\label{eq:liminfinite}
\liminf_{\gep\to 0}\inf_{\xi\in F\cap\cG_\gep(b+2\gep_0)}\frac{1}{\gep}\tilde J_{ \gep/d}(\xi) = \infty\,,
\eeq
then we can immediately conclude the result. Hence, we can assume that the above limit is finite, and therefore that at least a long a sub-sequence $(\gep_n)_{n\in\N}$ there is a constant $K>0$ such that for all $n\in\N$
\beq\label{eq:limfinite}
\inf_{\xi\in F\cap\cG_{\gep_n}(b+2\gep_0)}\frac{1}{\gep_n}\tilde J_{ \gep_n/d}(\xi)\leq K\,.
\eeq
In the sequel we will suppress the sub-sequence from the notation.
To continue we will need the following result, whose proof is deferred to Section~\ref{S:LB-JepsK}:
	\begin{proposition}
	\label{pr:LB-JepsK}
	For any compact set $\cK\subseteq \tilde \cX$ (in the $\bfD$-topology),
	\beq
	\label{eq:LB-JepsK}
	\liminf_{\gep \to 0} \inf_{\xi \in \cK}
	\frac 1\gep \tilde J_{\gep/d}(\xi) \ge 
{ \frac{1}{d}}	\inf_{\xi \in \cK} \tilde J(\xi).
	\eeq
\end{proposition}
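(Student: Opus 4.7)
The plan is to establish the pointwise $\Gamma$-liminf inequality $\liminf_{n\to\infty} \gep_n^{-1}\tilde J_{\gep_n/d}(\xi_n)\ge\tfrac{1}{d}\tilde J(\xi_*)$ for any sequence $\xi_n\to\xi_*$ in $(\tilde\cX,\bfD)$, and then extract such a sequence from a near-minimizer. If the left-hand side of~\eqref{eq:LB-JepsK} is finite, I pick $\gep_n\downarrow 0$ and $\xi_n\in\cK$ realizing the liminf asymptotically; by compactness of $\cK$ in $(\tilde\cX,\bfD)$, I pass to a subsequence with $\xi_n\to\xi_*\in\cK$, and conclude $\tfrac{1}{d}\tilde J(\xi_*)\ge\tfrac{1}{d}\inf_{\cK}\tilde J$.

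To prove the pointwise inequality I would lift to the pair space. Recalling from the contraction argument used to pass from~\eqref{eq:UB-LDPpair} to~\eqref{eq:summary} that $\tilde J_\tau(\xi)=\inf\{\tilde J^{(2)}_\tau(\eta):\tilde\Pr(\eta)=\xi\}$, I would select near-optimal lifts $\eta_n\in\tilde\cX^{(2)}$ with $\tilde\Pr(\eta_n)=\xi_n$ and $\tilde J^{(2)}_{\gep_n/d}(\eta_n)\le\tilde J_{\gep_n/d}(\xi_n)+\gep_n$; by compactness of $(\tilde\cX^{(2)},\bfD_2)$ and Lemma~\ref{lem:continuity-pi}, I pass to a further subsequence with $\eta_n\to\eta_*$ and $\tilde\Pr(\eta_*)=\xi_*$. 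Writing $\eta_n=\{\tilde\beta^{(n)}_i\}_{i\in I}$, with matching marginals $\alpha^{(n)}_i:=\beta^{(n)}_{i,1}=\beta^{(n)}_{i,2}$ required for finiteness of $\tilde J^{(2)}$, the component-matching property of the Mukherjee--Varadhan space $(\tilde\cX^{(2)},\bfD_2)$ ensures $\tilde\beta^{(n)}_i\to\tilde\beta^{(*)}_i$ for each $i$ (with mass possibly lost to infinity). By Fatou over components, it then suffices to show the single-orbit inequality
\beq
\label{eq:per-component-plan}
\liminf_{n\to\infty}\gep_n^{-1}\,h\bigl(\beta^{(n)}_i\,\big|\,\alpha^{(n)}_i\otimes\pi_{\gep_n/d}\bigr)\,\ge\,\tfrac{1}{d}\,J(\alpha^{(*)}_i).
\eeq

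For~\eqref{eq:per-component-plan} I would use the Gibbs variational formula: for any $\psi\in C_c^\infty(\bbR^d)$, the antisymmetric test function $f(x,y):=\psi(y)-\psi(x)$ has zero $\beta^{(n)}_i$-mean (both marginals coincide), so with $\tau=\gep_n/d$,
\beq
h\bigl(\beta^{(n)}_i\,|\,\alpha^{(n)}_i\otimes\pi_\tau\bigr)\,\ge\,-\log\int\alpha^{(n)}_i(\dd x)\,\bE_x\bigl[e^{\psi(B_\tau)-\psi(x)}\bigr].
\eeq
A short It\^o/Taylor expansion gives $\bE_x[e^{\psi(B_\tau)-\psi(x)}]=1+\tfrac{\tau}{2}\Delta\psi(x)+\tfrac{\tau}{2}|\nabla\psi(x)|^2+O(\tau^2)$; integration by parts against $\alpha^{(n)}_i=\phi_n^2\,\dd x$ and passing to the limit $n\to\infty$ yields the lower bound
\beq
\tau\Bigl[\int\phi_*\nabla\psi\cdot\nabla\phi_*-\tfrac{1}{2}\int|\nabla\psi|^2\phi_*^2\Bigr]+o(\tau),
\eeq
and optimizing over $\psi$ (the formal optimum being $\nabla\psi=\nabla\log\phi_*$) reproduces precisely $\tau\cdot J(\phi_*)=(\gep_n/d)\,J(\alpha^{(*)}_i)$, as needed. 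The main technical hurdle is to make this heat-kernel expansion and the ensuing optimization rigorous without a priori regularity of $\alpha^{(*)}_i$ --- in particular $\log\phi_*$ need not lie in $C_c^\infty$ --- which I would handle by approximating $\log\phi_*$ by smooth compactly supported truncations, combined with the lower semicontinuity of $J$ under $\bfD$-converging orbits (the analog at the level of $\tilde J$ of the results established for $\tilde\Gamma$ in Section~\ref{sec:lsc}) to transfer the test-function sup through the limit $n\to\infty$.
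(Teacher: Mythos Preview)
Your high-level strategy---prove a pointwise $\Gamma$-liminf inequality $\liminf_n \gep_n^{-1}\tilde J_{\gep_n/d}(\xi_n)\ge \tfrac1d\tilde J(\xi_*)$ whenever $\xi_n\to\xi_*$, then extract a minimizing sequence by compactness of $\cK$---is exactly the paper's route (see Corollary~\ref{cor:LB-Jepsn} and the short argument thereafter). The detour through the pair space is unnecessary: the paper works directly with the Donsker--Varadhan bound $J_\gep(\nu)\ge -\int\log\frac{\pi_\gep u}{u}\,d\nu$ for smooth positive $u$, then expands $\gep^{-1}\log(\pi_\gep u/u)\to \tfrac12\Delta u/u$. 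Your Gibbs variational bound with $f(x,y)=\psi(y)-\psi(x)$ reduces to the same thing once you set $u=e^\psi$, so the lift buys nothing.

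The genuine gap is the ``component-matching property'' you invoke. Convergence $\eta_n\to\eta_*$ in $(\tilde\cX^{(2)},\bfD_2)$ does \emph{not} mean the components of $\eta_n$ can be indexed by a fixed set $I$ with $\tilde\beta_i^{(n)}\to\tilde\beta_i^{(*)}$ individually: a single orbit can split into several in the limit, components can vanish, and there is no canonical labelling. This is precisely the subtlety the paper confronts in Lemma~\ref{pr:LB-Jepsn}. There, for the single-orbit case $\xi_n=\{\tilde\mu_n\}$, one writes (along a subsequence) $\mu_n=\sum_{i\le k}\alpha_n^{(i)}+\beta_n$ with $\alpha_n^{(i)}*\delta_{a_n^{(i)}}\rightharpoonup\alpha_i$ weakly, $|a_n^{(i)}-a_n^{(j)}|\to\infty$, and the remainder $\beta_n$ carrying at most $\delta$ of the interaction. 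The test function is then chosen as a \emph{sum} of shifted bumps $u_n(x)=\sum_i(c_i+v_{i,R}(x+a_n^{(i)}))$, so that $\tfrac12\Delta u_n/u_n$ localizes to $\tfrac12\Delta v_i/(c_i+v_i)$ on each piece. This manufactures the ``matching'' you need; without it your Fatou-over-components step has no footing.

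A secondary issue: your final optimization invokes $\phi_*=\sqrt{d\alpha_i^{(*)}/dx}$ and ``approximating $\log\phi_*$,'' but existence of such a density is exactly what finiteness of $\tilde J(\xi_*)$ would give---so this is circular. The fix is simply not to integrate by parts: the quantity $-\tfrac12\int(\Delta\psi+|\nabla\psi|^2)\,d\alpha_i^{(*)}$ (equivalently $-\tfrac12\int\frac{\Delta u}{u}\,d\alpha_i^{(*)}$ with $u=e^\psi$, or the paper's $-\tfrac12\int\frac{\Delta v}{c+v}\,d\alpha_i^{(*)}$) is well defined for \emph{any} sub-probability $\alpha_i^{(*)}$, and its supremum over test functions equals $J(\alpha_i^{(*)})$ by definition---possibly $+\infty$. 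No regularity of the limit is needed at this stage.
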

We now finish the proof of Proposition~\ref{pr:LDP-measure}. Define
\beq
\tilde{\cD}_\delta(b):=\{\xi\in \tilde\cX:\, \tilde\Gamma_\delta(\xi)\leq b\}\,,
\eeq
and recall the definition of $\tilde\cD(b)$ in the formulation of Proposition~\ref{pr:LDP-measure}.
By Lemma~\ref{lem:Psieps-to-gamma}, and the observation made around Equation~\eqref{eq:limfinite} we can write for any $\gep$ sufficiently small, $\delta\geq \gep_0$, and some fixed constant $C>0$,
\beq
\ba
-\inf_{\xi\in F\cap\cG_\gep(b+2\gep_0)}\frac{1}{\gep}\tilde J_{\gep/d}(\xi)&=-\inf_{\substack{\xi\in F\cap\cG_\gep(b+2\gep_0),\\ \tilde{J}_{ \gep/d}(\xi)\leq K\gep}}\frac{1}{\gep}\tilde J_{ \gep/d}(\xi)\\
&\leq -\inf_{\substack{\xi\in F\cap\tilde{\cD}(b+3\gep_0),\\\tilde{J}_{ \gep/d}(\xi)
\leq K\gep}}\frac{1}{\gep}\tilde {J}_{ \gep/d}(\xi)\\
&\leq -\inf_{\substack{\xi\in F\cap\tilde{\cD}_\delta(b+C\sqrt{\delta}),\\ \tilde{J}_{ \gep/d}(\xi)\leq K\gep}}\frac{1}{\gep}\tilde {J}_{ \gep/d}(\xi)\\
&\leq  -\inf_{\xi\in F\cap\tilde{\cD}_\delta(b+C\sqrt{\delta})}\frac{1}{\gep}\tilde {J}_{ \gep/d}(\xi)\,.
\ea
\eeq
Note that $F\cap \tilde{\cD}_\delta(b+C\sqrt{\delta})\subseteq \tilde\cX$ is closed by the second part of Proposition~\ref{prop:restGamma-lsc}, thus compact. Hence, Proposition~\ref{pr:LB-JepsK} implies that
\beq
-\liminf_{\gep\to 0} \inf_{\xi\in F\cap\tilde{\cD}_\delta(b+C\sqrt{\delta})}\frac{1}{\gep}\tilde {J}_{ \gep/d}(\xi) \leq - { \tfrac 1d}\inf_{\xi\in F\cap\tilde{\cD}_\delta(b+C\sqrt{\delta})}\tilde {J}(\xi)
\eeq
It remains to send $\delta\to 0$. To that end, we note that using the same arguments as in Equation~\eqref{eq:limfinite} we can again assume that there is some $K>0$ such that at least along a subsequence of $\delta$'s converging to zero we have that
\beq
\inf_{\xi\in F\cap\tilde{\cD}_\delta(b+C\sqrt{\delta})}\tilde {J}(\xi)\leq K
\eeq
for all such $\delta$. We will again suppress the choice of subsequence from the notation.
Thus, recalling that $\cA(K) := \{\xi \in \tilde \cX\colon \tilde J(\xi) \le K\}$, we can write 
\beq
-\inf_{\xi\in F\cap\tilde{\cD}_\delta(b+C\sqrt{\delta})}\tilde {J}(\xi) = -\inf_{\xi\in F\cap\tilde{\cD}_\delta(b+C\sqrt{\delta})\cap\cA(K)}\tilde {J}(\xi) \leq -\inf_{\xi\in F\cap\tilde{\cD}(b+2C\sqrt{\delta})\cap \cA(K)}\tilde {J}(\xi)\,,
\eeq
where we used Lemma~\ref{lem:Psieps-to-gamma} to obtain the last inequality. Letting $\gd\to 0$ with the help of Lemma~\ref{lem:deltatozero} below and using that
\beq
\inf_{\xi\in F\cap \tilde{\cD}(b)\cap \cA(K)} \tilde{J}(\xi) \ge
\inf_{\xi\in F\cap \tilde{\cD}(b)} \tilde{J}(\xi),
\eeq
we can conclude the proof of Proposition~\ref{pr:LDP-measure}.
\begin{lemma}\label{lem:deltatozero}
	Let $F \subseteq\tilde{\cX}$ be a closed set. Then,
	\begin{equation}\label{eq:deltatozero}
	\lim_{\delta\to 0}\inf_{\xi\in F\cap \tilde{\cD}(b+\gd)\cap\cA(K)} \tilde{J}(\xi) = \inf_{\xi\in F\cap \tilde{\cD}(b)\cap \cA(K)} \tilde{J}(\xi)\,.
	\end{equation}
\end{lemma}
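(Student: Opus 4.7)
The plan is to combine a monotonicity observation with a compactness argument based on lower semi-continuity.

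For the inequality $\le$ in~\eqref{eq:deltatozero}, I would note that $\tilde\cD(b)\subseteq \tilde\cD(b+\delta)$ for every $\delta>0$, so the infimum on the left-hand side is non-decreasing as $\delta\downarrow 0$ and bounded above, for each $\delta$, by the right-hand side. This gives the easy half.

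For the reverse inequality $\ge$, I may assume the limit on the left is finite (otherwise the claim is trivial). Along any sequence $\delta_n\downarrow 0$, I pick
\beq
\xi_n\in F\cap\tilde\cD(b+\delta_n)\cap\cA(K)\quad\text{with}\quad \tilde J(\xi_n)\le \inf_{F\cap\tilde\cD(b+\delta_n)\cap\cA(K)}\tilde J +\tfrac{1}{n}.
\eeq
Using that $(\tilde\cX,\bfD)$ is compact, I extract a subsequence (still denoted $\xi_n$) converging to some $\xi_\infty\in\tilde\cX$. Closedness of $F$ gives $\xi_\infty\in F$. Lower semi-continuity of $\tilde J$ on $(\tilde\cX,\bfD)$, which is built into the Mukherjee--Varadhan framework and is what makes $\cA(K)$ closed (as implicitly required for the statement of Proposition~\ref{prop:restGamma-lsc} to make sense), yields $\tilde J(\xi_\infty)\le\liminf_n\tilde J(\xi_n)\le K$, so $\xi_\infty\in\cA(K)$. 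Now I apply Proposition~\ref{prop:restGamma-lsc} to the points $\xi_n,\xi_\infty\in\cA(K)$ to obtain
\beq
\tilde\Gamma(\xi_\infty)\le \liminf_n \tilde\Gamma(\xi_n)\le \liminf_n (b+\delta_n)=b,
\eeq
hence $\xi_\infty\in F\cap\tilde\cD(b)\cap\cA(K)$. Combining this membership with the lsc of $\tilde J$ along the chosen subsequence gives
\beq
\inf_{F\cap\tilde\cD(b)\cap\cA(K)}\tilde J \le \tilde J(\xi_\infty)\le \liminf_n\tilde J(\xi_n)=\lim_{\delta\to 0}\inf_{F\cap\tilde\cD(b+\delta)\cap\cA(K)}\tilde J,
\eeq
which is the desired inequality.

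The only delicate ingredient is the lower semi-continuity of $\tilde J$ on $\tilde\cX$: a sequence converging in the $\bfD$-topology may split its mass across several orbits in the limit, so one must check that the Dirichlet energy is additive under this splitting and is itself lsc under weak convergence within each orbit. This is the same issue that underlies Proposition~\ref{prop:restGamma-lsc} and is handled by the Mukherjee--Varadhan compactification machinery already invoked elsewhere in the paper; once it is granted, everything else is a routine compactness-plus-lsc argument.
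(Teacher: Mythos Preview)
Your proof is correct and follows essentially the same approach as the paper's: both use the containment $\tilde\cD(b)\subseteq\tilde\cD(b+\delta)$ for the easy inequality, then extract a convergent (sub)sequence of (approximate) minimizers, invoke lower semi-continuity of $\tilde J$ to land in $\cA(K)$, and apply Proposition~\ref{prop:restGamma-lsc} to place the limit in $\tilde\cD(b)$. The only cosmetic difference is that the paper takes exact minimizers (available since each $F\cap\tilde\cD(b+\delta)\cap\cA(K)$ is compact) whereas you take $1/n$-approximate minimizers; both choices work equally well.
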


	\begin{proof}[Proof of Lemma~\ref{lem:deltatozero}] Since $\tilde{\cD}(b)\subseteq \tilde{\cD}(b+\gd)$, we directly have
	
\beq
	\limsup_{\delta\to 0}\inf_{F\cap \tilde{\cD}(b+\gd)\cap\cA(K)} \tilde{J} \le \inf_{F\cap \tilde{\cD}(b)\cap \cA(K)} \tilde{J}\,,
\eeq
From now on, we focus on the reversed inequality.
By Proposition~\ref{prop:restGamma-lsc} (first part of the statement) and the compactness of $\tilde{\cX}$, the set $F\cap\tilde\cD(b+\gd)\cap\cA(K)$ is actually compact. Hence, there exists a sequence $(\xi_\delta)_{\delta>0}$ of minimizers of the left hand side in~\eqref{eq:deltatozero}. By the compactness of $\tilde{\cX}$ we may extract a subsequence converging to some $\xi_0\in\tilde{\cX}$, which for ease of notation we again denote by $(\xi_\delta)_{\delta >0}$. By the lower semi-continuity of $\tilde J$, we get that $\xi_0\in \cA(K)$ and by the lower semi-continuity of $\tilde\Gamma$ restricted to $\cA(K)$, we get that $\tilde\Gamma(\xi_0) \le \liminf_{\gd\to 0} \tilde\Gamma(\xi_\gd) = b$, hence $\xi_0\in \tilde \cD(b)$. In conclusion,
\beq
\liminf_{\delta\to 0}\inf_{F\cap \tilde{\cD}(b+\gd)\cap\cA(K)} \tilde{J}
= \liminf_{\delta\to 0} \tilde{J}(\xi_\gd) \ge \tilde J(\xi_0) \ge \inf_{F\cap \tilde\cD(b)\cap \cA(K)} \tilde J.
\eeq
\end{proof}
%%%%%%%%%%%
\subsection{Proof of Lemma~\ref{lem:lsc-phi}}
\label{sec:lem:lsc-phi}
We proceed in several steps.\\
{\noindent \it (i) Truncation procedure.}
For $\eta\in(0,\gep/2)$ and $\mu\in\cM_{\le 1}(\bbR^d\times\bbR^d)$, let us define
\beq
\label{eq:truncatedphi}
\phi_{\infty,\gep,\eta}(\mu) := \int_{\bbR^d} \dd z
	\Big(%
	1- e^{-\frac{\gk}{\gep}\int \varphi_{\gep,\eta}(z-x,z-y)\mu(\dd x,\dd y)}
	\Big)%
\eeq
where
\beq
\varphi_{\gep,\eta}(x,y) := \int_{\eta}^{\gep-\eta} \frac{p_{ s/d}(-x)p_{ (\gep-s)/d}(y)}{p_{ \gep/d}(x-y)} \dd s,
\eeq
and for $\xi = \{\tilde \alpha_i\}_{i\in I}$,
\beq
\tilde\phi_{\infty,\gep,\eta}(\xi):=\sum_{i\in I} \phi_{\infty,\gep,\eta}(\alpha_i) + \gk\Big(1-\frac{2\eta}{\gep}\Big)\Big(1-\sum_{i\in I} \ga_i(\bbR^d\times\bbR^d)\Big).
\eeq
It is enough to show that $\tilde\phi_{\infty,\gep,\eta}$ is lower semi-continuous with respect to the metric $\mathbf{D}_2$, since $\tilde\phi_{\infty,\gep}$ is the supremum of $\tilde\phi_{\infty,\gep,\eta}$ with respect to $\eta$.\\
{\noindent \it (ii) Rewriting the truncated function.} 
By expanding the exponential in~\eqref{eq:truncatedphi}, and using Lemma~\ref{lem:phi.q.prob} we get for all $\mu\in\cM_{\le 1}(\bbR^d\times \bbR^d)$,
\beq
\ba
&\tilde\phi_{\infty,\gep,\eta}(\{\mu\}) - \gk(1-\tfrac{2\eta}{\gep})\\
&\qquad =
\int_{\bbR^d}\dd z \sum_{n\ge 2} \frac{(-1)^{n+1}}{n!} \Big(\frac{\gk}{\gep}\Big)^n \int_{(\bbR^d\times\bbR^d)^n} \prod_{i=1}^n \varphi_{\gep,\eta}(z-x_i,z-y_i) \mu(\dd x_i, \dd y_i).
\ea
\eeq
By Fubini-Tonelli's theorem, we may interchange the sum over $n$ and the integral over $z$. Indeed,
\beq\label{eq:FubiniTonelli}
\ba
&\int_{\bbR^d}\dd z \sum_{n\ge 2}  \frac{1}{n!} \Big(\frac{\gk}{\gep}\Big)^n \int_{(\bbR^d\times\bbR^d)^n} \prod_{i=1}^n \varphi_{\gep,\eta}(z-x_i,z-y_i) \mu(\dd x_i, \dd y_i)\\
&= \int_{\bbR^d}\dd z \sum_{n\ge 2} \frac{1}{n!} \Big(\frac{\gk}{\gep}\Big)^n\Big(\int_{\bbR^d\times\bbR^d}  \varphi_{\gep,\eta}(z-x,z-y) \mu(\dd x, \dd y)\Big)^n\\
&\le \int_{\bbR^d}\dd z \Big(e^{\tfrac{\gk}{\gep} \int_{\bbR^d\times\bbR^d}  \varphi_{\gep,\eta}(z-x,z-y) \mu(\dd x, \dd y)} - 1\Big)\\
&\le e^{\tfrac{\gk}{\gep}\|\varphi_{\gep,\eta}\|_\infty} 
\int_{\bbR^d} \int_{\bbR^d\times\bbR^d} \frac{\gk}{\gep} \varphi_{\gep,\eta}(z-x,z-y) \mu(\dd x, \dd y) \dd z\\
&\le \gk e^{\tfrac{\gk}{\gep}\|\varphi_{\gep,\eta}\|_\infty}\mu(\R^d\times\R^d) < \infty.
\ea
\eeq
We have used Lemma~\ref{lem:phi.q.prob} and the comment slightly below Lemma~\ref{lem:uniformvarphi} on the boundedness of the function $\varphi_{\gep,\eta}$.
We obtain thereby:
\beq
\ba
\label{eq:phimu-exp}
&\tilde\phi_{\infty,\gep,\eta}(\{\mu\}) -\gk(1-\tfrac{2\eta}{\gep}) \\
&\qquad =
\sum_{n\ge 2} \frac{(-1)^{n+1}}{n!} \Big(\frac{\gk}{\gep}\Big)^n \int_{(\bbR^d\times\bbR^d)^n} V_n^{(\eta)}(x_1,y_1,\ldots, x_n, y_n) \prod_{i=1}^n\mu(\dd x_i, \dd y_i),
\ea
\eeq
where
\beq
V_n^{(\eta)}(x_1,y_1,\ldots, x_n, y_n) := \int_{\bbR^d}\dd z \prod_{i=1}^n \varphi_{\gep,\eta}(z-x_i,z-y_i).
\eeq
{\noindent \it (iii) Lower-semi continuity of the truncated function.} The $V_n$'s defined above (we omit $\eta$) are clearly \emph{translation invariant}, and we will also prove that they are \emph{continuous} (Step 1 below) and \emph{vanishing at infinity} (Step 2 below). It then follows by the definition of the metric $\mathbf{D}_2$ that
\beq
\xi\mapsto \sum_{i\in I} \int V_n^{(\eta)}(x_1,y_1,\ldots,x_n,y_n)\prod_{j=1}^{n} \alpha_i(\dd x_j, \dd y_j)
\eeq
is continuous. Moreover, by~\eqref{eq:FubiniTonelli},
\beq
\ba
\sum_{i\in I}\sum_{n\geq 2} &  \frac{1}{n!} \Big(\frac{\gk}{\gep}\Big)^n
\int_{(\bbR^d\times\bbR^d)^n} V_n^{(\eta)}(x_1,y_1,\ldots x_n, y_n) \prod_{j=1}^n\alpha_i(\dd x_j, \dd y_j)\\
&\leq \sum_{i\in I} \gk e^{\tfrac{\gk}{\gep}\|\varphi_{\gep,\eta}\|_\infty}\alpha_i(\R^d\times\R^d) <\infty\,,
\ea
\eeq
This allows us to use Fubini-Tonelli's theorem and obtain
\beq
\ba
\sum_{i\in I} \sum_{n\ge 2} & \frac{(-1)^{n+1}}{n!} \Big(\frac{\gk}{\gep}\Big)^n \int_{(\bbR^d\times\bbR^d)^n} V_n^{(\eta)}(x_1,y_1,\ldots x_n, y_n) \prod_{j=1}^n\alpha_i(\dd x_j, \dd y_j)\\
&= \sum_{n\ge 2}  \frac{(-1)^{n+1}}{n!} \Big(\frac{\gk}{\gep}\Big)^n \sum_{i\in I} \int_{(\bbR^d\times\bbR^d)^n} V_n^{(\eta)}(x_1,y_1,\ldots x_n, y_n) \prod_{j=1}^n\alpha_i(\dd x_j, \dd y_j),
\ea
\eeq
with the sum over $n$ on the right hand side converging absolutely. Therefore, the right-hand side is continuous with respect to the metric $\mathbf{D}_2$.\\ 

{\noindent \it (iv) Conclusion.}
As announced in Step {\it (iii)} above, it remains to prove the two required properties of the function $V_n^{(\eta)}$ in order to complete this section.\\
{\bf \noindent Step 1}. {\it Continuity of $V_n^{(\eta)}$.} First, one can prove that for all $z$, the function 
\beq
(x_1,y_1,\ldots x_n, y_n) \mapsto \prod_{i=1}^n \varphi_{\gep,\eta}(z-x_i,z-y_i)
\eeq
is continuous (by dominated convergence for instance). Now, observe that for any $(x,y)$ in a compact set $K\subseteq \bbR^d\times \bbR^d$ (say $|x|,|y|\le M$) and $\eta>0$
\beq
\varphi_{\gep,\eta}(z-x,z-y) \le C(\gep,\eta,M)
\exp\Big(-\frac{{d}|z|^2}{\gep}+ \frac{\gep{ d} M|z|}{\eta(\gep-\eta)}\Big),
\eeq
which is integrable in $z$, hence the continuity of $V_n^{(\eta)}$, by dominated convergence. To get the inequality above, use that
\beq
\ba
\frac{|x-y|^2}{2\gep} - \frac{|z-x|^2}{2s} - \frac{|z-y|^2}{2(\gep-s)} &= 
\frac{|x-y|^2}{2\gep} - \frac{|x|^2}{2s} - \frac{|y|^2}{2(\gep-s)} - \frac{\gep|z|^2}{2s(\gep-s)}\\
&\qquad + \frac{<z,x>}{s} + \frac{<z,y>}{(\gep-s)}\\
& \le - \frac{\gep|z|^2}{2s(\gep-s)} + \frac{|x||y|}{\gep}+ \frac{|z||x|}{s} + \frac{|z||y|}{(\gep-s)}.
\ea
\eeq

{\bf \noindent Step 2.} {\it Evanescence of $V_n^{(\eta)}$ at infinity.} Let us start with an estimate:
\begin{lemma}\label{lem:uniformvarphi} For all $x,y\in\bbR^d$, $\eta\in (0,\gep/2)$,
\beq
\varphi_{\gep,\eta}(x,y) \le C(\eta,\gep)\min\Big(1,(\sqrt{|x|}+\sqrt{|y|})^{-2}\Big)
\exp\Big[\tfrac{ d}{2\gep}\Big(|x-y|^2 - (|x|+|y|)^2\Big)\Big].
\eeq
\end{lemma}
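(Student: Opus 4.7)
I will expand the ratio of Gaussian heat kernels, extract the target exponential factor by a completion-of-squares identity, and then bound the residual one-dimensional integral in two complementary regimes.

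First, writing out the explicit Gaussian form gives
\beq
\frac{p_{s/d}(-x)\,p_{(\gep-s)/d}(y)}{p_{\gep/d}(x-y)} = \Big(\frac{d\gep}{2\pi s(\gep-s)}\Big)^{d/2}\exp\Big(-\tfrac{d}{2}\Big[\tfrac{|x|^2}{s}+\tfrac{|y|^2}{\gep-s}-\tfrac{|x-y|^2}{\gep}\Big]\Big),
\eeq
and on $s\in[\eta,\gep-\eta]$ the polynomial prefactor is uniformly bounded by some $C(\eta,\gep)$. The core algebraic step is the identity
\beq
\frac{|x|^2}{s}+\frac{|y|^2}{\gep-s} = \frac{(|x|+|y|)^2}{\gep}+\frac{(|x|(\gep-s)-|y|s)^2}{\gep s(\gep-s)},
\eeq
obtained by reducing to common denominator $\gep s(\gep-s)$ and observing that the numerator collapses to $(|x|(\gep-s)-|y|s)^2$ via the perfect-square identity $a^2-2ab+b^2=(a-b)^2$. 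This isolates the desired factor and reduces the claim to estimating
\beq
I(x,y):=\int_\eta^{\gep-\eta}\exp\Big(-\frac{d(|x|(\gep-s)-|y|s)^2}{2\gep s(\gep-s)}\Big)\dd s.
\eeq

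Next, I would bound $I(x,y)$ in two ways. Trivially, $I(x,y)\le\gep-2\eta$, which accounts for the ``$1$'' inside the $\min$. For the decay at large $|x|+|y|$, I use $s(\gep-s)\le\gep^2/4$ on $[\eta,\gep-\eta]$ to strengthen the exponent, then perform the change of variable $u=|x|(\gep-s)-|y|s$, $\dd u=-(|x|+|y|)\,\dd s$ (valid whenever $|x|+|y|>0$), extending the range to $\bbR$ to obtain a Gaussian integral:
\beq
I(x,y)\le\frac{1}{|x|+|y|}\int_{\bbR}\exp\Big(-\frac{2du^2}{\gep^3}\Big)\dd u=\frac{C(\gep)}{|x|+|y|}.
\eeq
Finally, the AM-GM-type identity $(\sqrt{|x|}+\sqrt{|y|})^2=|x|+|y|+2\sqrt{|x||y|}\le 2(|x|+|y|)$ gives $\tfrac{1}{|x|+|y|}\le\tfrac{2}{(\sqrt{|x|}+\sqrt{|y|})^2}$, upgrading the bound to the form stated in the lemma. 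Taking the minimum of the two estimates and multiplying by $C(\eta,\gep)\,e^{d(|x-y|^2-(|x|+|y|)^2)/(2\gep)}$ closes the argument.

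\textbf{Main obstacle.} There is no genuine obstacle: the proof is driven entirely by the completion-of-squares identity, which is the Brownian-bridge computation underlying $\varphi_{\gep,\eta}$. The only points of care are that the substitution $s\mapsto u$ requires $|x|+|y|>0$ (handled in the complementary case by the trivial bound) and that the conversion from $(|x|+|y|)^{-1}$ to $(\sqrt{|x|}+\sqrt{|y|})^{-2}$ costs only a harmless factor of $2$ absorbed into $C(\eta,\gep)$.
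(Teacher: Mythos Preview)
Your proof is correct but takes a somewhat different route from the paper's. The paper studies the auxiliary function $f_{\ga,\gb}(s)=\ga/s+\gb/(\gep-s)$, computes that its minimum is $\tfrac{1}{\gep}(\sqrt{\ga}+\sqrt{\gb})^2$ and that its second derivative is bounded below by $\tfrac{2}{\gep^3}(\ga^{1/4}+\gb^{1/4})^4$, and then uses a second-order Taylor lower bound to write the exponent as a Gaussian in $s$ with inverse variance proportional to $(\sqrt{|x|}+\sqrt{|y|})^4$; integrating this Gaussian gives the factor $(\sqrt{|x|}+\sqrt{|y|})^{-2}$ directly. You instead use the exact completion-of-squares identity $\tfrac{|x|^2}{s}+\tfrac{|y|^2}{\gep-s}=\tfrac{(|x|+|y|)^2}{\gep}+\tfrac{(|x|(\gep-s)-|y|s)^2}{\gep s(\gep-s)}$, bound $s(\gep-s)\le\gep^2/4$, and do a linear substitution to obtain the cleaner intermediate decay $(|x|+|y|)^{-1}$, which you then relax to $(\sqrt{|x|}+\sqrt{|y|})^{-2}$ via AM--GM. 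Your argument is more elementary (no derivative computations or Taylor expansion) and in fact yields the slightly sharper intermediate bound $C/(|x|+|y|)$ before the final relaxation; the paper's approach has the minor advantage of landing on the stated form in one step.
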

Note that the term in the exponential is nonpositive, and equals zero if and only if $x$ and $y$ are colinear with opposite directions (i.e $\langle x,y\rangle = -|x||y|$). In particular, the function $\varphi_{\gep,\eta}$ is bounded.
\begin{proof}[Proof of Lemma~\ref{lem:uniformvarphi}]
For all $s\in (0,\gep)$, $\ga,\gb>0$, define
\beq
f_{\ga,\gb}(s) = \frac{\ga}{s} + \frac{\gb}{\gep-s}.
\eeq
By computing the first four derivatives, we see that the function achieves its minimum $\tfrac{1}{\gep}(\ga^{1/2}+ \gb^{1/2})^2$ at $\frac{\gep\ga^{1/2}}{\ga^{1/2}+ \gb^{1/2}}$ and that its second derivative achieves its minimum $\tfrac{2}{\gep^3}(\ga^{1/4}~+~ \gb^{1/4})^4$ at $\frac{\gep\ga^{1/4}}{\ga^{1/4}+ \gb^{1/4}}$. Therefore, for all $s\in(0,\gep)$,  we get by using a second-order Taylor expansion:
\beq
f_{\ga,\gb}(s) - \frac{1}{\gep}(\ga^{1/2}+ \gb^{1/2})^2 \ge  \frac{1}{\gep^3}\Big(\ga^{1/4}+ \gb^{1/4} \Big)^4 \Big(s - \frac{\gep\ga^{1/2}}{\ga^{1/2}+ \gb^{1/2}}\Big)^2.
\eeq
Applying this inequality to $\ga = |x|^2$ and $\gb = |y|^2$, we obtain
\beq
\ba
\varphi_{\gep,\eta}(x,y)
 \le \frac{\gep^{d/2}}{[2\pi\eta(\gep-\eta)]^{-d/2}} &\int_0^\gep\exp\Big[-\frac{ d}{2\gep^3}(s - \tfrac{\gep|x|}{|x|+|y|})^2(\sqrt{|x|}+\sqrt{|y|})^4\Big]\dd s\\
& \times\exp\Big[\frac{ d}{2\gep}(|x-y|^2-(|x|+|y|)^2)\Big]\,.
\ea
\eeq
Using that $\int_{\bbR} e^{-cs^2}\dd s = \sqrt{\pi/c}$, we get the result.
\end{proof}
Let us come back to Step 2. Let $n\ge 2$ and $\eta\in (0, \gep/2)$. We now show that $V_n^{(\eta)}$ is vanishing. To this end, consider a sequence
\beq
(x_{1,N},y_{1,N},x_{2,N},y_{2,N},\ldots, x_{n,N},y_{n,N})
\eeq
such that 
\begin{equation}\label{eq:maxinf}
	\max_{1\leq i \le j\leq n}\big\{|x_{j,N}-x_{i,N}|\vee |y_{j,N}-y_{i,N}|\vee |x_{j,N}-y_{i,N}|\big\}\ \stackrel{N\to\infty}{\longrightarrow}\infty\,.
\end{equation}
In the sequel we suppress the $N$ from the notation and assume w.l.o.g.\ that maximizing indexes $i$ and $j$ can be found in $\{1,2\}$. Pick $M>0$. For $N$ large enough,  we then have $\max(|y_1- y_2|,|x_1-x_2|,|x_1-y_1|,|x_2-y_2|)\ge M$. If $|y_1-y_2|\ge M$, we may write
\beq
\bbR^d = A_1\cup A_2, \qquad \textrm{where\ }A_i := \{z\in \bbR^d \colon |z-y_i|\ge M/2\}, \qquad i\in\{1,2\}.
\eeq
Using the boundedness of $\varphi_{\gep,\eta}$ (Lemma~\ref{lem:uniformvarphi}) we see that it is sufficient to focus on the product of $\varphi_{\gep,\eta}(\cdot -x_i,\cdot-y_i)$ for $i=1,2$. Using Lemma~\ref{lem:uniformvarphi} and the fact that $z\mapsto\varphi_{\gep,\eta}(z-x_2, z-y_2)$ is a sub-probability density, we get
\beq\label{eq:A1plus}
\int_{A_1} \dd z \prod_{i=1}^{2}\varphi_{\gep,\eta}(z-x_i,z-y_i)
\le {\rm (cst)} \int_{A_1}\dd z \frac{1}{|z-y_1|}\varphi_{\gep,\eta}(z-x_2,z-y_2)
\leq \frac{{\rm (cst)}}{M},
\eeq
and similarly for the integral over $A_2$.
The other cases, namely $|x_1-x_2|\ge M$, $|x_1-y_1|\ge M$ and $|x_2-y_2|\ge M$, can be dealt with in the same way.

 \subsection{Proof of Lemma~\ref{lem:Psieps-to-gamma}}\label{S:Gamma}
In this section, we prove Lemma~\ref{lem:Psieps-to-gamma} and take the occasion to correct a glitch in~\cite{BBH2001}, see Remark~\ref{rmk:fixlemmaBBH} below.
We proceed in two steps.\\
{\bf \noindent Step 1. Proof of~\eqref{eq:Psieps-to-gamma}.} We further split this step in two parts. We first treat the case of $\xi$ consisting of a single orbit and then treat the general case.\\
{\it \noindent (i) Single orbit case.}
Let $\nu\in\cM_{\leq 1}(\bbR^d)$ and $f$ be its density with respect to the Lebesgue measure. Assume $\xi = \{\tilde\nu\}$. Following (2.94) and (2.95) in the proof of~\cite[Lemma 7]{BBH2001}, we obtain:
\beq
|\tilde\Gamma(\xi) - \tilde\Psi_{ d\gep}(\xi)| \le \frac{\gk}{\gep} \int_0^\gep \dd s \|\nu \pi_s - \nu\|_{\tv},
\eeq
where
\beq
\|\nu \pi_s - \nu\|_{\tv} := \int_{\bbR^d} |f(x) - \pi_s f(x)|\dd x.
\eeq
The integrand is split and bounded as follows:
\beq
\|\nu \pi_s - \nu\|_{\tv} \le \|\nu \pi_s - \nu \pi_{s+\gep}\|_{\tv} + \|\nu \pi_{s+\gep} - \nu\|_{\tv}.
\eeq
As in~\cite[Lemma 7]{BBH2001}, the first term in the sum is bounded by $8\sqrt{J_\gep(\nu)}$. We now bound the second term. From this point on, the proof differs from~\cite[Lemma 7]{BBH2001}, see Remark~\ref{rmk:fixlemmaBBH} below. Writing 
$f - \pi_{s+\gep} f = (\sqrt{f}-\sqrt{\pi_{s+\gep} f})(\sqrt{f}+\sqrt{\pi_{s+\gep} f})$ and using the Cauchy-Schwartz inequality, we obtain
\beq
\|\nu \pi_{s+\gep} - \nu\|_{\tv} = \int_{\bbR^d} |f(x) - \pi_{s+\gep} f(x)| \dd x \le 2 H(\nu, \nu \pi_{s+\gep}),
\eeq
where 
\beq
H(\nu, \nu \pi_{s+\gep}) := \Big(\int_{\bbR^d} (\sqrt{f(x)} - \sqrt{\pi_{s+\gep} f(x)})^2\dd x \Big)^{1/2}
\eeq
is the Hellinger distance between $\nu$ and $\nu\pi_{s+\gep}$. Note that $\sqrt{\pi_{s+\gep} f} \ge \pi_{s+\gep} \sqrt{f}$, by Jensen's inequality. Therefore, using that $\nu$ and $\nu\pi_{s+\gep}$ have the same mass,
\beq
H(\nu,\nu \pi_{s+\gep})^2 = 2 \Big(\int f(x)  - \int \sqrt{f(x)} \sqrt{\pi_{s+\gep} f(x)}\Big) \le 2 L_{s+\gep}(\sqrt{f}),
\eeq
where we have defined
\beq
L_t(g) := \bra g, g \ket - \bra g, \pi_t g \ket, \qquad g\in L^2(\bbR^d), \qquad t\ge 0,
\eeq
$\bra\,\cdot,\cdot \ket$ being the usual inner product on the space of square integrable functions. By~\cite[Theorem 7.10]{LiebLoss}, we know that for all $g\in L^2(\bbR^d)$, the map $t>0\mapsto L_t(g)/t$ is non-increasing (monotonicity is actually hidden in the proof of that theorem). Finally, note that for any $t>0$
\beq
J_t(\nu) = \sup_{u>0} - \int \log\Big(\frac{\pi_t u(x)}{u(x)}\Big) f(x) \dd x \ge 
- \int \log\Big(\frac{\pi_t \sqrt{f(x)}}{\sqrt{f(x)}}\Big) f(x) \dd x \ge L_t(\sqrt{f}),
\eeq
using $\log(1+z)\le z$ for the last inequality. Summing up, we obtain for $0\le s\le \gep$,
\beq
\ba
\|\nu \pi_{s+\gep} - \nu\|^2_{\tv} \le 4 H(\nu, \nu\pi_{s+\gep})^2
\le 8 L_{s+\gep}(\sqrt{f})
&= 8(s+\gep) \frac{L_{s+\gep}(\sqrt{f})}{s+\gep}\\
&\le 8(s+\gep) \frac{L_{\gep}(\sqrt{f})}{\gep}\\
&\le 16 L_{\gep}(\sqrt{f})\\
&\le 16 J_\gep(\nu),
\ea
\eeq
and finally, 
\beq
|\Gamma(\nu) - \Psi_{ d\gep}(\nu)| \le 12\gk\sqrt{J_\gep(\nu)}.
\eeq
{\it \noindent (ii) General case.} Assume $\xi = \{\tilde\ga_i, i\in I\}$. Similarly as in Step (i) and using the triangular inequality, we get:
\beq
|\tilde\Gamma(\xi) - \tilde\Psi_{ d\gep}(\xi)| \le  \sum_{i\in I} \frac{\gk}{\gep}\int_0^\gep \dd s \|\ga_i \pi_s - \ga_i\|_{\tv}.
\eeq
Define $\gs_i := \ga_i(\bbR^d)$ and $\bar \ga_i := \ga_i/\gs_i \in\cM_1(\bbR^d)$ for all $i\in I$. From what precedes in (i),
\beq
\|\ga_i \pi_s - \ga_i\|_{\tv} = \gs_i \|\bar\ga_i \pi_s - \bar\ga_i\|_{\tv} \le 12\gs_i\sqrt{J_\gep(\bar\ga_i)} = 12\sqrt{\gs_i}\sqrt{J_\gep(\ga_i)}.
\eeq
We may now conclude with the Cauchy-Schwarz inequality, since $\sum \gs_i \le 1$, that
\beq
|\tilde\Gamma(\xi) - \tilde\Psi_{ d\gep}(\xi)| \le 12\gk\sqrt{J_\gep(\xi)},
\eeq
which completes the proof.\\
{\bf \noindent Step 2. Proof of~\eqref{eq:Psieps-to-gamma2}} We re-use the arguments from the previous step.
For simplicity, let us stick to the case of a single orbit, i.e. $\xi=\{\tilde\nu\}$, where $\nu\in\cM_{\le 1}(\bbR^d)$ with a density $f$ w.r.t.\ Lebesgue measure. Then,
\beq
|\tilde\Gamma(\xi) - \tilde\Gamma_{ d\gd}(\xi)| \le \gk\|\nu\pi_\gd - \nu\|_{\tv} \le 4\gk\sqrt{L_\gd(\sqrt{f})}.
\eeq
Since $\gd \ge \gep$,
\beq
\frac{L_\gd(\sqrt{f})}{\gd}\le  \frac{L_\gep(\sqrt{f})}{\gep}\le  \frac{J_\gep(\xi)}{\gep},
\eeq
which concludes the proof.
\begin{remark}\label{rmk:fixlemmaBBH}
Equation~\eqref{eq:Psieps-to-gamma} in Lemma~\ref{lem:Psieps-to-gamma} extends~\cite[Lemma 7]{BBH2001} to the space $\tilde{\cX}$. Our proof actually corrects a flaw in the proof of~\cite[Lemma 7]{BBH2001}. The latter proof uses~\cite[Lemma 5(b)]{BBH2001}, which deduces monotonicity of the map $t\mapsto J_t(\nu)/t$ from the sub-additivity of $t \mapsto J_t(\nu)$. However, such monotonicity cannot be derived from sub-additivity in general (though the reverse holds true). As a counter example, consider $f(t) = 1 + |\sin(1/t)|$ for $t>0$. This function is sub-additive but $f(t)/t$ fails to be non-increasing on any right-neighborhood of $0$.
\end{remark}

\subsection{Proof of Proposition~\ref{prop:restGamma-lsc}}\label{S:gammalsc}
The proof contains two parts.\\
(i) Let us begin with the first part of the statement. Let $K>0$. It follows from Lemma~\ref{lem:Psieps-to-gamma} that $\tilde\Psi_\gep$ converges uniformly to $\Gamma$ on $\cA(K) = \{\xi \in \tilde \cX\colon \tilde J(\xi) \le K\}$, as $\gep\to 0$. If $\tilde\Psi_\gep$ restricted to $\cA(K)$ is lower semi-continuous for all $\gep>0$ we deduce therefore that $\tilde\Gamma$ restricted to $\cA(K)$ is also lower semi-continuous. It remains to prove that $\tilde\Psi_\gep$ restricted to $\cA(K)$ is lower semi-continuous. This follows from the three following facts:
	\begin{itemize}
		\item $\Psi_\gep(\mu) = \phi_{\infty,\gep}(\mu \otimes \pi_{ \gep/d})$ for all $\mu\in \cM_{\leq 1}(\bbR^d)$: see~\cite[Proof of Lemma 6]{BBH2001} and use the fact that $(\mu\otimes \pi_{ \gep/d})(\bbR^d\times\bbR^d) = \mu(\bbR^d)$; 
		\item $\tilde\phi_{\infty,\gep}$ is lower semi-continuous for the $\bfD_2$-topology on $\tilde\cX^{(2)}$(Lemma~\ref{lem:lsc-phi});
		\item $\xi \in \tilde\cX \mapsto \xi\otimes \pi_{ \gep/d} \in \tilde\cX^{(2)}$ is continuous for the $\bfD$-topology on $\tilde\cX$. Here, when $\xi=\{\tilde{\alpha}_i\}_{i\in I}\in\tilde\cX$ we mean $\xi\otimes \pi_{ \gep/d}:=\{\tilde{\alpha_i\otimes \pi_{ \gep/d}}\}_{i\in I}$.
	\end{itemize}

	To prove the last point, pick $f\colon (\bbR^d\times \bbR^d)^n\to \bbR\in \cF_{2n}$ and consider $\xi=\{\tilde\alpha_i\}_{i\in I}\in \tilde\cX$. 
Define, for $x_1,\ldots, x_n \in \bbR^d$,
\beq
g_i(x_1,\ldots, x_n) := \int_{(\R^d)^n}f(x_1, y_1,\ldots, x_n,y_n)\prod_{j=1}^np_{ \gep/d}(x_j,y_j)\dd y_j,
\eeq	
so that
	\beq\label{eq:from-f-to-g}
	\int_{(\R^d\times\R^d)^n} f(x_1, y_1,\ldots, x_n,y_n) \prod_{j=1}^n(\alpha_i\otimes \pi_{ \gep/d})(\dd x_j, \dd y_j)
	=: \int_{(\R^d)^n}g(x_1,\ldots, x_n)\prod_{j=1}^n\alpha_i(\dd x_j)\,.
	\eeq
Let us prove that $g\in\cF_n$. It is immediate that $g$ is translation invariant and that it vanishes at infinity. Regarding continuity, let $(x_1,\ldots, x_n)\in (\R^d)^n$ such that $|x_j|\leq M$ for all $j\in\{1,\ldots,n\}$ and some $M>0$. Then, for all $j\in\{1,\ldots,n\}$,
	\beq
	p_{ \gep/d}(x_j,y_j)\le \frac{\textrm{(cst)}}{\gep^{d/2}}\exp\Big(-\tfrac{{ d}|y_j|^2}{2\gep}+ \tfrac{{ d}M|y_j|}{2\gep}\Big)\,,
	\eeq
	which is integrable. The continuity of $g$ now follows from the boundedness and continuity of $f$, and dominated convergence. From~\eqref{eq:from-f-to-g} and the fact that $g\in\cF_n$, we may now conclude that
	\beq
	\xi\mapsto \sum_{i\in I} \int_{(\R^d\times\R^d)^n}f(x_1,y_1,\ldots, x_n,y_n)\prod_{j=1}^n(\alpha_i\otimes \pi_{ \gep/d})(\dd x_j, \dd y_j)
	\eeq
	is continuous, which completes this part of the proof.\\

	(ii) Let us now prove the second part of the statement, namely that $\tilde\Gamma_\gd$ is lower semi-continuous.
	The proof of this part follows closely the proof of Lemma~\ref{lem:lsc-phi}, so we will not provide all the details. First of all we note that in the same way as in the proof of Lemma~\ref{lem:lsc-phi} we can write for any sub-probability measure $\mu\in\cM_{\leq 1}(\R^d)$,

	\beq\label{eq:Gammadeltaexpansion}
	\tilde\Gamma_{ d\delta}(\{\tilde\mu\}) = \gk -\sum_{n\geq 2}\frac{(-\gk)^n}{n!}\int_{(\R^d)^n}W_{n}^{\delta}(x_1,x_2,\ldots, x_n)\prod_{i=1}^{n}\mu(\dd x_i)\,,
	\eeq
with the translation invariant function:
	\beq
	W_{n}^{\delta}(x_1,x_2,\ldots, x_n):=\int_{\R^d}\dd z \prod_{i=1}^{n}p_\delta(z-x_i)\,.
	\eeq
	In view of the proof of Lemma~\ref{lem:lsc-phi} and of the boundedness of the heat kernel $p_\delta$ it is enough to show that $W_n^\delta$ is continuous and vanishes at infinity. Concerning the continuity, assume that for some $M>0$ we have that $|x_i|\leq M$ for all $i\in\{1,2,\ldots,n\}$. Then for all $i$ we can estimate
	\beq
	p_\delta(z-x_i)\le \frac{\textrm{(cst)}}{\delta^{d/2}}\exp\Big(-\tfrac{|z|^2}{2\delta}+\tfrac{M|z|}{2\delta}\Big)\,,
	\eeq
	which is integrable in $z$. Since $p_\delta$ is continuous, the continuity of $W_n^\delta$ follows by dominated convergence.
	It remains to show that $W_n^\delta$ vanishes at infinity, in the sense that
	\beq
	\lim_{\max_{i\neq j}|x_i-x_j|\to\infty} W_n^\delta(x_1,x_2,\ldots, x_n)=0\,.
	\eeq To that end, and without any loss of generality, we can assume that $|x_1-x_2|\to\infty$\,. Let $M=|x_1-x_2|$ and define
	\beq
	\begin{aligned}
		A_1&= \{z\in\R^d\,:|z-x_1|<\tfrac{M}{2}, |z-x_2|\geq \tfrac{M}{2}\}\,,\\
		A_2&=\{z\in\R^d\,:|z-x_1|\geq\tfrac{M}{2}, |z-x_2|< \tfrac{M}{2}\}\,,\\
		A_3&=\{z\in\R^d\,:|z-x_1|\geq\tfrac{M}{2}, |z-x_2|\geq \tfrac{M}{2}\}\,.
	\end{aligned}
	\eeq
	By the choice of $M$, it follows that $A_1\cup A_2\cup A_3=\R^d$\,. On $A_1$ we can estimate
	\beq
	p_\delta(z-x_2)\le \frac{\textrm{(cst)}}{\delta^{d/2}}e^{-\tfrac{M^2}{8\delta}}\,.
	\eeq
	Thus, by the boundedness of $p_\delta$ is follows that
	\beq
	\int_{A_1}\dd z\prod_{i=1}^n p_\delta(z-x_i)\le  \frac{\textrm{(cst)}}{\delta^{d/2}}e^{-\tfrac{M^2}{8\delta}}\int\dd z\, p_\delta(z-x_1)=\frac{\textrm{(cst)}}{\delta^{d/2}}e^{-\tfrac{M^2}{8\delta}}\,,
	\eeq
	which tends to zero as $M\to\infty$. The integrals over $A_2$ and $A_3$ can be dealt with in the same manner. This concludes the proof.
\subsection{Proof of Proposition~\ref{pr:LB-JepsK}}\label{S:LB-JepsK}
The proof relies on the following lemma. 
\begin{lemma}
	\label{pr:LB-Jepsn}
	Suppose $(\xi_n) \to \xi = \{\tilde\ga_i\}_{i\in I}\in \tilde\cX$. Then, for every sequence $(\gep_n)_{n\in\N}$ tending to zero, and for every $\gd>0$, there exists $k =k(\gd)$ such that for every collection of positive constants $(c_i)_{1\le i \le k}$ and every collection $(v_i)_{1\le i \le k}$ of smooth non-negative functions with compact support,
	\beq\label{eq:LB-Jepsn}
	\liminf_{n\to\infty}
	\frac{1}{\gep_n} \tilde J_{\gep_n}(\xi_n) \ge
	- \sum_{1\le i \le k} \int \frac{\frac12 \gD v_i(x)}{c_i + v_i(x)} \ga_i(\dd x) - \gd\,.
	\eeq
	Moreover, $k=k(\delta)$ can be chosen such that $\delta\mapsto k(\delta)$ is non-increasing with $\lim_{\delta\to 0}k(\delta)=|I|$, where the cardinality $|I|$ of the index set $I$ may be either finite or infinite.
\end{lemma}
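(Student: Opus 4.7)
My plan is to establish the bound via a Donsker--Varadhan style variational lower bound for relative entropy, applied component-wise after extracting from $\xi_n$ a finite family of well-separated sub-measures that approximate the first $k$ orbits of $\xi$. The truncation at $k=k(\gd)$ handles the possibly infinite cardinality of $I$ and absorbs into $\gd$ the residual loss coming from the approximate super-additivity of entropy; the ordering of the $\ga_i$'s by decreasing mass will guarantee the monotonicity of $k(\gd)$ and the limit $k(\gd)\to|I|$ as $\gd\to 0$.

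\textbf{Extracting well-separated sub-measures.} First I would fix $k\in\N$ (eventually set to $k(\gd)$) and use the structural description of convergence in $(\tilde\cX,\bfD)$ from~\cite{MV2016} to produce, for $n$ large enough, mutually singular sub-probability measures $\gb_{i,n}$ $(1\le i\le k)$ dominated by a representative of $\xi_n$, together with translations $(x_{i,n})_{1\le i\le k}\subset\R^d$ and a radius sequence $R_n\to\infty$ such that $\gt_{-x_{i,n}}\gb_{i,n}$ is supported in $B(0,R_n)$ and converges weakly to $\ga_i$, while $|x_{i,n}-x_{j,n}|\gg R_n+\sqrt{\gep_n}$ for $i\neq j$. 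The large spatial separation will make the Gaussian heat-kernel cross-leakages between the supports of the $\gb_{i,n}$'s negligible, so that a standard chain-rule manipulation of the relative entropy should give
\beq
\label{eq:superadd-plan}
\tilde J_{\gep_n}(\xi_n) \;\ge\; \sum_{i=1}^{k} h\bigl(\gb_{i,n}\,\bigl|\,\gb_{i,n}\pi_{\gep_n}\bigr) - o_n(1),
\eeq
where $o_n(1)$ can be made smaller than $\gd/2$ for $n$ large enough.

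\textbf{Variational step and passage to the limit.} For each $i\le k$ I would then apply the Donsker--Varadhan variational inequality with the bounded positive test function $u_i := c_i+v_i$ to obtain
\beq
h(\gb_{i,n}\mid \gb_{i,n}\pi_{\gep_n}) \;\ge\; -\int \log\!\Bigl(\frac{\pi_{\gep_n}u_i}{u_i}\Bigr)\,\gb_{i,n}(\dd x).
\eeq
Since $v_i$ is smooth and compactly supported and $u_i\ge c_i>0$, a Taylor expansion of the heat semigroup (uniform on the support of $v_i$) yields
\beq
\frac{1}{\gep_n}\log\frac{\pi_{\gep_n}u_i(x)}{u_i(x)} \;\xrightarrow[n\to\infty]{}\; \frac{\tfrac12 \gD v_i(x)}{c_i+v_i(x)},
\eeq
uniformly in $x\in\R^d$, with a limit that is continuous, bounded and compactly supported. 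Recentring $v_i$ at $x_{i,n}$ and using translation invariance of the Brownian semigroup, the weak convergence $\gt_{-x_{i,n}}\gb_{i,n}\to\ga_i$ would then deliver
\beq
\liminf_{n\to\infty}\frac{1}{\gep_n}\,h(\gb_{i,n}\mid\gb_{i,n}\pi_{\gep_n}) \;\ge\; -\int \frac{\tfrac12\gD v_i(x)}{c_i+v_i(x)}\,\ga_i(\dd x).
\eeq
Summing over $i=1,\ldots,k$ and combining with~\eqref{eq:superadd-plan} would give~\eqref{eq:LB-Jepsn}, once $k(\gd)$ is chosen as the smallest index past which the orbits of $\xi$ carry total mass at most $\gd/C$ for a constant $C$ depending only on $\|\tfrac12\gD v_i\|_\infty$ and the $c_i$'s.

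\textbf{The main obstacle.} The hard part will be the well-separated extraction underlying~\eqref{eq:superadd-plan}: convergence in $(\tilde\cX,\bfD)$ is only expressed through the translation-invariant observables $\gL(f_r,\cdot)$ rather than through weak convergence on fixed spatial windows, so producing the sub-measures $\gb_{i,n}$ with enough separation and sharp-enough mass control to license the super-additivity~\eqref{eq:superadd-plan} will require unpacking the concentration-compactness structure behind the Mukherjee--Varadhan compactification (and its adaptation in~\cite{EP23}). Once this quantitative extraction is in place, the remainder should be a routine semigroup/weak-convergence exercise.
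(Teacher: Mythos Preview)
Your overall strategy---concentration--compactness decomposition of $\xi_n$ followed by a Donsker--Varadhan lower bound and a semigroup Taylor expansion---is the right one and matches the paper's. However, you route it through an intermediate super-additivity claim~\eqref{eq:superadd-plan} that is both unnecessary and is, as you correctly flag, the hardest step in your plan. The paper sidesteps this entirely: instead of splitting the entropy and applying the variational bound to each piece $\gb_{i,n}$ separately, it builds a \emph{single} test function
\[
u_n(x)=\sum_{i=1}^{k}\Bigl(c_i+v_i(x+a_n^{(i)})\,\varphi\bigl(\tfrac{x+a_n^{(i)}}{R}\bigr)\Bigr)
\]
(with $\varphi$ a smooth cutoff) and applies the Donsker--Varadhan inequality once, directly to the full measure $\mu_n$. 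Because the supports of the shifted $v_i$'s separate as $n\to\infty$, one has $\gD u_n/u_n \to \gD v_{i,R}/(c_i+v_{i,R})$ on the $i$-th window and $\to 0$ on the residual $\gb_n$, so the sum over $i$ in~\eqref{eq:LB-Jepsn} falls out automatically. No entropy chain rule or cross-leakage control is needed; the spatial separation is exploited at the level of the test function rather than the entropy. In effect, proving your super-additivity~\eqref{eq:superadd-plan} would amount to exactly this combined-test-function argument, so it is cleaner to use it directly.

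Two smaller points. First, your use of $h(\gb_{i,n}\mid\gb_{i,n}\pi_{\gep_n})$ is ambiguous: the Donsker--Varadhan bound you invoke in the next display is a property of the rate functional $J_\gep(\nu)=\sup_{u>0}\bigl[-\int\log(\pi_\gep u/u)\,d\nu\bigr]$, not of the relative entropy of $\nu$ with respect to its own heat flow; these are different objects. Second, your proposed choice of $k(\gd)$ (``the smallest index past which the orbits carry mass at most $\gd/C$ with $C$ depending on $\|\tfrac12\gD v_i\|_\infty$ and the $c_i$'') is circular, since the lemma requires $k(\gd)$ to be fixed \emph{before} the $(c_i,v_i)$ are chosen. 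In the paper, $k$ and the residual bound $\gd$ come purely from the concentration--compactness decomposition (the $\gb_n$ satisfies $\limsup_n\int V\,d\gb_n^{\otimes 2}\le\gd$ for all $V\in\cF_2$), independently of any later test function.
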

Let us recall the following standard fact about the entropy rate function:
\beq
J(\ga) = \sup_{u,c} \int -\frac12\frac{\gD u}{c+u} \dd \ga, \qquad \ga\in\cM_{\le 1}(\bbR^d),
\eeq
where the supremum runs over $c\in(0,\infty)$ and non-negative, smooth, and compactly supported functions $u$, see~\cite[Proof of Lemma 4.7]{MV2016}.
By optimizing over the $c_i$'s and $v_i$'s in~\eqref{eq:LB-Jepsn} and letting $\gd\to 0$, we thus obtain as an immediate corollary of Lemma~\ref{pr:LB-Jepsn}:
\begin{corollary}
\label{cor:LB-Jepsn}
Suppose $(\xi_n) \to \xi = \{\tilde\ga_i\}_{i\in I}\in \tilde\cX$. Then, for every sequence $(\gep_n)_{n\in\N}$ tending to zero,
	\beq
	\liminf_{n\to\infty}
	\frac{1}{\gep_n} \tilde J_{ \gep_n/d}(\xi_n) \ge
	{ \frac1d}\tilde J(\xi).
	\eeq
\end{corollary}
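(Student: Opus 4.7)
The proof is an immediate consequence of Lemma~\ref{pr:LB-Jepsn} combined with the variational characterization of the entropy rate function $J(\ga) = \sup_{u,c} \int -\tfrac12 \tfrac{\gD u}{c+u}\,\dd\ga$ stated just above the corollary. The plan is to apply Lemma~\ref{pr:LB-Jepsn} to the shifted sequence $\gep_n' := \gep_n/d$, which still tends to zero, then to saturate the lower bound by optimizing over the test functions, and finally to exhaust the orbit decomposition $\xi = \{\tilde\ga_i\}_{i\in I}$ by letting $\gd \to 0$.

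First, I fix $\gd > 0$ and apply Lemma~\ref{pr:LB-Jepsn} with the sequence $(\gep_n/d)_{n\in\N}$ to produce an integer $k = k(\gd)$ such that, for every collection $(c_i)_{1\le i\le k} \subset (0,\infty)$ and every collection $(v_i)_{1\le i\le k}$ of smooth non-negative compactly supported functions,
\beq
\liminf_{n\to\infty} \frac{d}{\gep_n}\, \tilde J_{\gep_n/d}(\xi_n) \ge \sum_{1\le i\le k(\gd)} \int \Big(-\tfrac12 \tfrac{\gD v_i(x)}{c_i+v_i(x)}\Big)\, \ga_i(\dd x)\; -\; \gd.
\eeq
The crucial observation is that the left-hand side does not depend on the choices $(c_i,v_i)$, so the inequality persists upon taking the supremum on the right-hand side. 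Since the test pairs $(c_i,v_i)$ appear in separate summands, the joint supremum equals the sum of the individual suprema, and the variational formula for $J$ then yields
\beq
\liminf_{n\to\infty} \frac{d}{\gep_n}\, \tilde J_{\gep_n/d}(\xi_n) \ge \sum_{1\le i\le k(\gd)} J(\ga_i)\; -\; \gd.
\eeq

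Finally, I let $\gd \to 0$. By the last assertion of Lemma~\ref{pr:LB-Jepsn}, $k(\gd)$ is non-increasing in $\gd$ with $k(\gd) \to |I|$, so the partial sums $\sum_{i\le k(\gd)} J(\ga_i)$ are monotone non-decreasing (as $J\ge 0$) and converge to $\sum_{i\in I} J(\ga_i) = \tilde J(\xi)$, whether $|I|$ is finite or infinite. Dividing by $d$ gives
\beq
\liminf_{n\to\infty} \frac{1}{\gep_n}\, \tilde J_{\gep_n/d}(\xi_n) \ge \frac{1}{d}\, \tilde J(\xi),
\eeq
which is the desired conclusion. No step presents a genuine obstacle since all the analytic content has already been absorbed into Lemma~\ref{pr:LB-Jepsn} and the variational representation of $J$; the only point requiring a moment's attention is the separability of the supremum across the index set $I$, which follows from the additivity of the bound over $i$.
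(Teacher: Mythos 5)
Your proposal is correct and follows exactly the route the paper intends: apply Lemma~\ref{pr:LB-Jepsn} to the sequence $(\gep_n/d)$, use that the supremum over the independent pairs $(c_i,v_i)$ splits across the finite sum and equals $\sum_{i\le k(\gd)}J(\ga_i)$ by the variational formula for $J$, and then let $\gd\to 0$ using the monotonicity of $k(\gd)$ so that the partial sums increase to $\tilde J(\xi)$. The paper states this derivation only as a one-line remark, so your write-up simply fills in the same steps with no deviation or gap.
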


\begin{proof}[Proof of Lemma~\ref{pr:LB-Jepsn}]
	The proof strategy is inspired by Lemma 4.2 in~\cite{MV2016}. Let us first consider the simpler case when, for all $n$, $\xi_n$ consists of a single orbit denoted by $\tilde \mu_n$. Then, as in~\cite{MV2016} we may write at least along some subsequence, which we will suppress from the notation
	\beq
	\mu_n = \sum_{1\le i\le k} \ga_n^{(i)} + \gb_n.
	\eeq
	with
	\beq
\ga_n^{(i)} * \gd_{a_n^{(i)}} \stackrel{\textrm{(weakly)}}{\longrightarrow} \ga_i, \quad
\qquad n\to\infty,
	\eeq
and
\beq
\liminf_{n\to\infty} \inf_{i\neq j} |a_n^{(i)}- a_n^{(j)}| = +\infty
\eeq
	for some sequences $(a_n^{(i)})$ in $\bbR^d$. Moreover, for all $V\in \cF_2$,
	\beq
	\ba
	&\int V(x,y) \ga_n^{(i)}(\dd x) \gb_n(\dd y) \stackrel{n\to\infty}{\longrightarrow} 0,\\
	&\limsup_{n\to\infty} \int V(x,y) \gb_n(\dd x) \gb_n(\dd y) \le \gd.
	\ea
	\eeq
	Moreover, $k$ depends monotonously on $\delta$ and tends to $|I|$ as $\delta$ tends to zero.
	For any function $u$ which is smooth and positive,
	\beq
	\frac 1\gep \log \Big(\frac{\pi_\gep u}{u}\Big)(x) \le
	\Big(\frac{\frac12\gD u}{u}\Big)(x) +
	\Big| \frac{\pi_\gep u -u}{\gep u}(x) -  \frac{\frac12\gD u}{u}(x)\Big|.
	\eeq
	By integrating this inequality w.r.t. $\mu_n$, we obtain
	\beq
	\ba
	\int \frac1\gep \log \Big(\frac{\pi_\gep u}{u}\Big)\mu_n(\dd x) \le 
	&\sum_{1\le i\le k} \int \Big(\frac{\frac12\gD u}{u}\Big)(x) \ga_n^{(i)}(\dd x)
	+ \int \Big(\frac{\frac12\gD u}{u}\Big)(x) \gb_n(\dd x)\\
	&\qquad+ \int \Big| \frac{\pi_\gep u -u}{\gep u}(x) -  \frac{\frac12\gD u}{u}(x)\Big| \mu_n(\dd x).
	\ea
	\eeq
	We now pick $u$ of the form
	\beq
	u(x) = \sum_{1\le i\le k} (c_i +v_i(x+a_n^{(i)}) \varphi\Big(\frac{x+ a_n^{(i)}}{R}\Big)),
	\eeq
	where
	\begin{itemize}
	\item $R>0$,
		\item $(c_i)_{1\le i\le k}$ are of positive numbers,
		\item $(v_i)_{1\le i\le k}$ are non-negative, smooth and compactly supported functions,
		\item $\varphi$ is smooth and satisfies $\varphi(x) = 0$ if $\|x\|\ge 2$ and $\varphi(x) = 1$ if $\|x\|\le 1$.
	\end{itemize}
	We abbreviate $v_{i,R}(\cdot) = v_i(\cdot) \varphi(\frac{\cdot}{R})$, so that
	\beq
	\ba
	u_n(x) = \sum_{1\le i\le k} (c_i + v_{i,R}(x+ a_n^{(i)})),\\
	\gD u_n(x) = \sum_{1\le i\le k} \gD v_{i,R}(x+ a_n^{(i)}).
	\ea
	\eeq
	Then,
	\beq
	\ba
	&\limsup_{n\to\infty} \int \Big(\frac{\frac12\gD u_n}{u_n}\Big)(x) \ga_n^{(i)}(\dd x)
	= \int \Big(\frac{\frac12\gD v_{i,R}}{c_i+v_{i,R}}\Big)(x) \ga_i(\dd x),\\
	&\limsup_{n\to\infty} \int \Big(\frac{\frac12\gD u_n}{u_n}\Big)(x) \gb_n(\dd x)
	\le \gd.
	\ea
	\eeq
	Moreover,
	\beq
	\Big| \frac{\pi_\gep u_n -u_n}{\gep u_n}(x) -  \frac{\frac12\gD u_n}{u_n}(x)\Big| \le 
	 \sum_{1\le i\le k} \frac 1{c_i}
	\Big\|\frac{\pi_\gep v_{i,R} -v_{i,R}}{\gep} - \frac12\gD v_{i,R}\Big\|_\infty,
	\eeq
	which converges to $0$ as $\gep\to 0$. Note that the convergence is uniform because the $v_{i,R}$'s are in the domain of the generator $\frac12\gD$ (see e.g.~\cite[Chapter 6.2]{LeGall-en}). Finally, we obtain
	\beq
	\limsup_{n\to\infty} \frac{1}{\gep_n} \int
	\log \frac{\pi_{\gep_n} u_n}{u_n}(x)\mu_n(\dd x)
	\le \int \sum_{1\le i\le k} \Big(\frac{\frac12\gD v_{i,R}}{c_i+v_{i,R}}\Big)(x) \ga_i(\dd x)
	+ \gd.
	\eeq
	This completes the proof in the single-orbit case.
	
	\par If for all $n\ge 1$, $\xi_n$ consists of several orbits denoted by $\xi_n^{(j)}$, we can choose a subsequence such that, for all $j$, $\xi_n^{(j)}$ converges to $\{\tilde \ga_i^{(j)}, i\in I\}$. From the single-orbit case, we get, for all $m\ge 1$,
\beq
\liminf_{n\to\infty} \frac{1}{\gep_n} \tilde J_{\gep_n}(\xi_n) \ge \liminf_{n\to\infty} \sum_{j=1}^m  \frac{1}{\gep_n} \tilde J_{\gep_n}(\xi_n^{(j)}) \ge \sum_{j=1}^m \sum_{i\in I} J(\ga_i^{(j)}).
\eeq
Letting $m\to\infty$, we get the result.
\end{proof}

\begin{proof}[Proof of Proposition~\ref{pr:LB-JepsK}]
Let $(\gep_n)$ be a sequence of positive real numbers converging to zero. From~Corollary~\ref{cor:LB-Jepsn} we obtain that
	\beq
	\liminf_{n\to\infty}
	\frac{1}{\gep_n} \tilde J_{ \gep_n/d}(\xi_n) \ge { \frac1d}\tilde J(\xi)
	\eeq
	as soon as $\xi_n$ converges to $\xi$ in $\tilde\cX$.
By compactness of $\cK$ and lower semi-continuity of the function $\tilde J_\gep$, there exists $\xi_n \in \cK$ such that
	\beq
	\inf_{\tilde \mu \in \cK}
	\frac 1{\gep_n} \tilde J_{ \gep_n/d}(\tilde \mu) = \frac 1{\gep_n} \tilde J_{ \gep_n/d}(\xi_n).
	\eeq
	and any subsequence of $(\xi_n)$ has a limit (along a further subsequence) in $\cK$, which we will denote by $\xi$. From what precedes, 
	\beq
	\liminf_{n\to\infty} \frac 1{\gep_n} \tilde J_{ \gep_n/d}(\xi_n) \ge { \frac1d}\tilde J(\xi) \ge { \frac1d}\inf_\cK \tilde J.
	\eeq
\end{proof}
\appendix

\section{Technical estimates}
%%%%%%%%%%%%%%%%%%%%%%%%%%%%%%%%%%
\subsection{Large Deviation estimate for the binomial distribution}
\begin{lemma}
\label{lem:LDest}
For all $M\in\bbN$ and $C>\gep_0>0$, 
\beq
\bP(\bin(M,\gep_0/C)\ge \gep_0M) \le \exp(-M\gep_0 \Xi(C)),
\eeq
with $\Xi(C):=\log C+\tfrac1C-1$.
\end{lemma}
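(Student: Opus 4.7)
The plan is to apply a standard Chernoff bound for the binomial distribution. Write $X\sim\bin(M,p)$ with $p=\gep_0/C$. For any $t>0$, Markov's inequality applied to $e^{tX}$ gives
\beq
\bP(X\ge \gep_0 M)\le e^{-t\gep_0 M}\,\bE[e^{tX}]=e^{-t\gep_0 M}\bigl(1-p+pe^t\bigr)^M,
\eeq
using that the moment generating function of a binomial factorizes over its $M$ independent Bernoulli factors.

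Next, I would apply the elementary bound $1+x\le e^x$ to the factor $1+p(e^t-1)$ to get $\bigl(1-p+pe^t\bigr)^M\le \exp\bigl(Mp(e^t-1)\bigr)$. Substituting $p=\gep_0/C$, this yields
\beq
\bP(X\ge \gep_0 M)\le \exp\!\Big(M\gep_0\Big[\tfrac{e^t-1}{C}-t\Big]\Big).
\eeq

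Now I would optimize over $t>0$. Differentiating $t\mapsto (e^t-1)/C-t$ gives $e^t/C-1$, which vanishes at $t=\log C$. At this value the bracket equals $(C-1)/C-\log C=-(\log C+1/C-1)=-\Xi(C)$, producing exactly the claimed bound $\exp(-M\gep_0\Xi(C))$.

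There is no real obstacle here; the only minor point to mention is that the optimization requires $t=\log C\ge 0$, i.e.\ $C\ge 1$. In the regime $\gep_0<C<1$ we have $\Xi(C)<0$ and the inequality is trivial, so the statement holds in all cases covered by the hypothesis $C>\gep_0>0$.
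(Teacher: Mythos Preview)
Your proof is correct and follows essentially the same approach as the paper: Chernoff's bound, the inequality $1+x\le e^x$ (equivalently $\log(1+x)\le x$), and the choice $t=\log C$. Your remark on the case $\gep_0<C<1$ is a nice extra touch that the paper omits.
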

\begin{proof}[Proof of Lemma~\ref{lem:LDest}]
By Chernov's bound we may write for all $\gl>0$,
\beq
\ba
\bP(\bin(M,\gep_0/C)\ge \gep_0M) &\le 
\exp\Big(-M\Big[\gl\gep_0 - \log(1+ \tfrac{\gep_0}{C}(e^\gl-1))\Big]\Big)\\
&\le 
\exp\Big(-\gep_0M\Big[\gl - \tfrac{1}{C}(e^\gl-1)\Big]\Big),
\ea
\eeq
and we conclude by picking $\gl = \log C$.
\end{proof}
%%%%%%%%%%%%%%%%%%%%%%%%%%%%%%%%%%

%%%%%%%%%%%%%%%%%%%%%%%%%%%%%%%%%%
\subsection{Concentration inequality}
\label{sec:app-con-ine}
\begin{proof}[Proof of Proposition~\ref{pr:conc.ineq}]
The proof follows the same lines of argument as in~\cite[Proposition 2.2.2]{Phetpradap}, where the same result was proven for simple random walk on the torus. Thus, instead of giving a complete proof, we only point out the differences. In Equation (2.2.12) in~\cite{Phetpradap} the estimate 
\begin{equation}
\frac{1}{n} \hat{\cR}_{n,\gep}^K \leq N^d\,,
\end{equation}
should be replaced by
\begin{equation}
\frac{1}{n}\hat{\cR}_{n,\gep}^K\leq 1\,,
\end{equation}
which follows immediately from the definition of $\hat{\cR}_{n,\gep}^K$. Moreover, at the beginning of Step (4) the crude estimate

\begin{equation}
\frac{1}{n} \Big|\cR_{n,\gep}^K-m_{n,\gep}^K\Big| \leq 1
\end{equation}
should be used which implies exactly as in~\cite{Phetpradap} that actually
\begin{equation}
\frac{1}{n} \Big|\bE_{n,\gep}\cR_{n,\gep}^K-m_{n,\gep}^K\Big| \leq \frac{\delta}{3} + \bP_{n,\gep}\Big(\frac{1}{n} \Big|\cR_{n,\gep}^K-m_{n,\gep}^K\Big|\geq \frac{\delta}{3}\Big)\,.
\end{equation}
From that point on the proof follows exactly~\cite{Phetpradap}.
\end{proof}
%%%%%%%%%%%%%%%%%%%%%%%%%%%%%%%%%%
\section{Proof of Proposition~\ref{pr:app-cond-range}}
\label{sec:app-proof-cond}
For the proof of Proposition~\ref{pr:app-cond-range} we will need to make use of the following local central limit theorem.
\begin{proposition}[Local limit theorem, see Theorem 1.2.1 in~\cite{Lawler2013}]
	\label{pr:LLT}
	Recall that $p_s(x) = (2\pi s)^{-\frac d2}\exp(-\frac{|x|^2}{2s})$ for $s>0$ and $x\in\bbR ^d$. Define $E(n,x) = \bP(S_n=x) - 2p_{n/d}(x)$ if $n$ and $x$ have the same parity, and zero otherwise. Then
	\beq
	|E(n,x)|\le O(n^{-\frac{d+2}{2}}) \wedge |x|^{-2}O(n^{-\frac{d}{2}}),
	\eeq
	where $u_n = O(v_n)$ means $u_n\le C v_n$ for some finite positive constant $C$ and all $n$.
\end{proposition}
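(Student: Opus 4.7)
The plan is to prove the local limit theorem by Fourier inversion on the lattice. The one-step characteristic function of the simple random walk is $\phi(\theta) = d^{-1}\sum_{j=1}^d\cos\theta_j$, so $\bE[e^{i\theta\cdot S_n}] = \phi(\theta)^n$ and
\beq
\bP(S_n=x) = \frac{1}{(2\pi)^d}\int_{[-\pi,\pi]^d}\phi(\theta)^n e^{-i\theta\cdot x}\,\dd\theta,
\eeq
while $p_{n/d}(x) = (2\pi)^{-d}\int_{\bbR^d} e^{-n|\theta|^2/(2d)}e^{-i\theta\cdot x}\dd\theta$. The bipartiteness of the lattice enters through $\phi(\theta+\pi\mathbf{1}) = -\phi(\theta)$, so $\phi^n$ has two critical points of maximal modulus, namely $\theta=0$ and $\theta=\pi\mathbf{1}$. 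Their contributions will produce, respectively, $p_{n/d}(x)$ and $(-1)^n e^{-i\pi\mathbf{1}\cdot x}p_{n/d}(x) = (-1)^{n+|x|_1}p_{n/d}(x)$, which add to $2p_{n/d}(x)$ when $n$ and $|x|_1$ have the same parity and cancel otherwise, giving the parity condition in the statement.

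For the uniform bound $|E(n,x)| = O(n^{-(d+2)/2})$, partition $[-\pi,\pi]^d$ into small Euclidean balls $B_\delta(0)$, $B_\delta(\pi\mathbf{1})$, and their complement. On the complement, $|\phi(\theta)|\le 1-c(\delta)$, whence that contribution to the Fourier integral is super-exponentially small. On $B_\delta(0)$, a fourth-order Taylor expansion gives $\log\phi(\theta) = -|\theta|^2/(2d) + Q(\theta) + O(|\theta|^6)$ with $Q$ an explicit symmetric homogeneous quartic. Writing
\beq
\phi(\theta)^n = e^{-n|\theta|^2/(2d)}\bigl(1 + nQ(\theta) + O(n|\theta|^6 + n^2|\theta|^8)\bigr)
\eeq
and changing variables $\theta=\eta/\sqrt n$, the $nQ$ term contributes an integral of size $n^{-d/2-1}$ against the Gaussian moment of a quartic, and the further error terms are of the same order. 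Extending the integration from $|\eta|\le\delta\sqrt n$ to all of $\bbR^d$ costs another exponentially small error. The same analysis near $\pi\mathbf{1}$, after the substitution $\theta = \pi\mathbf{1}+\eta$ and the identity $\phi^n(\pi\mathbf{1}+\eta) = (-1)^n\phi^n(\eta)$, produces the second Gaussian with the announced phase.

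For the decay bound $|E(n,x)| \le |x|^{-2}O(n^{-d/2})$, the idea is integration by parts. Since $x_j e^{-i\theta\cdot x} = i\partial_{\theta_j}e^{-i\theta\cdot x}$ and $\phi^n$ is periodic, boundary terms vanish and
\beq
|x|^2[\bP(S_n=x) - 2p_{n/d}(x)] = -\frac{1}{(2\pi)^d}\sum_{j=1}^d\int_{[-\pi,\pi]^d}\partial_{\theta_j}^2\bigl[\phi(\theta)^n - G_n(\theta)\bigr]e^{-i\theta\cdot x}\dd\theta + R_n,
\eeq
where $G_n(\theta) := e^{-n|\theta|^2/(2d)} + (-1)^n e^{-n|\theta-\pi\mathbf{1}|^2/(2d)}$ and the tail $R_n$ coming from extending the Gaussian from $[-\pi,\pi]^d$ to $\bbR^d$ is $O(e^{-cn})$. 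Using the Taylor expansion of Paragraph 2 to control $\partial_{\theta_j}^2[\phi^n - G_n]$ near the two critical points, together with the exponential decay of $\phi^n$ away from them, one checks that this integrand has $L^1$ norm $O(n^{-d/2})$ after the same change of variables $\theta=\eta/\sqrt n$, which yields the claim. The main obstacle is the careful bookkeeping at the two critical points: in particular, verifying that the quartic term $Q(\theta)$ in the Taylor expansion produces an $L^1$ contribution of order $n^{-d/2-1}$ rather than $n^{-d/2}$, which relies on the cubic symmetry of the walk, and that after two $\theta$-differentiations the cancellation between $\phi^n$ and $G_n$ at the critical points is preserved.
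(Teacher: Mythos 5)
The paper does not prove Proposition~\ref{pr:LLT}: it is quoted verbatim from Theorem 1.2.1 of \cite{Lawler2013}, so there is no internal proof to compare against. Your argument is, in substance, the classical characteristic-function proof from that reference: Fourier inversion over $[-\pi,\pi]^d$ with $\phi(\theta)=d^{-1}\sum_j\cos\theta_j$, the two maximal-modulus points $0$ and $\pi\mathbf{1}$ producing the factor $2$ and the parity restriction, a fourth-order expansion giving the uniform error $O(n^{-(d+2)/2})$ (the absence of a cubic term, by symmetry of the walk, is what prevents a worse $n^{-(d+1)/2}$), and a double integration by parts in $\theta$ (legitimate because $x\in\bbZ^d$ makes $e^{-i\theta\cdot x}$ periodic) giving the $|x|^{-2}O(n^{-d/2})$ bound through the cancellation of $\Delta_\theta[\phi^n-G_n]$ at the critical points. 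The outline is correct and every claimed order of magnitude is the right one.

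Two steps need more care than your write-up gives them. First, on the literal cube $[-\pi,\pi]^d$ the set $\{|\phi|=1\}$ consists of $0$ together with \emph{all} $2^d$ corners $(\pm\pi,\dots,\pm\pi)$, which are congruent to $\pi\mathbf{1}$ modulo $2\pi\bbZ^d$; as stated, your bound $|\phi(\theta)|\le 1-c(\delta)$ off $B_\delta(0)\cup B_\delta(\pi\mathbf{1})$ is false, and your $G_n$, with a single Gaussian centred at $\pi\mathbf{1}$, does not cancel $\Delta_\theta\phi^n$ near the other corners, which by itself would ruin the $O(n^{-d/2})$ $L^1$ estimate. The standard repair is to work on the torus, or to shift the fundamental domain (say to $[-\pi/2,3\pi/2]^d$, allowed since $\phi^n e^{-i\theta\cdot x}$ is $2\pi$-periodic), after which your two-ball decomposition is exact; the boundary terms created by integrating the non-periodic $G_n$ by parts over the shifted cube are $O(e^{-cn})$ and harmless. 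Second, the expansion $\phi(\theta)^n=e^{-n|\theta|^2/(2d)}\bigl(1+nQ(\theta)+O(n|\theta|^6+n^2|\theta|^8)\bigr)$ is not uniform on $|\theta|\le\delta$, since $nQ(\theta)$ is of order $n\delta^4$ there; either split at an intermediate scale $|\theta|\le n^{-1/2+\epsilon}$ (the remaining annulus contributes $O(e^{-cn^{2\epsilon}})$) or note that for $\delta$ small the factor $e^{cn|\theta|^4}$ is absorbed by the Gaussian, so the error still integrates to $O(n^{-d/2-2})$. With these routine adjustments your proof goes through and coincides with the argument in the cited source.
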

We also need to introduce truncated versions of some of the functions defined in Section~\ref{sec:cond-range}, namely:
\beq
\frak{q}_{\gep, n, \gL}^\rho(x,y):= -M \log[1-\bar q_{\ell, \gL}^\rho(x, y)],
\eeq 
where for $\rho>0$,
\beq
\ba
\bar q_{\ell, \gL}^\rho(x, y) =\begin{cases}
	\bar q_{\ell, \gL}(x, y), &\text{if } d(\gL, \{\lfloor xn^{1/d}\rfloor, \lfloor yn^{1/d}\rfloor\})> \rho n^{1/d} ,\\
	0, &\text{otherwise,}
\end{cases}
\ea
\eeq
and with the convention $\bar q_{\ell,\Lambda}^0=\bar q_{\ell,\Lambda}$. When $\gL=\gL(n,z)$ is the singleton $\{\lfloor zn^{1/d}\rfloor\}$ ($z\in\bbR^d$) we define
\beq
\bar q_{\ell}^\rho(x,y;z) := \bar q_{\ell,\gL(n,z)}^\rho(x,y),
\qquad x,y\in \bbR^d.
\eeq
We also define
\beq\label{def:varphi.rho}
\varphi^\rho_\gep(x,y) =
\begin{cases}
\varphi_\gep(x,y) & {\rm if}\ x,y\notin \bar\cB(0,\rho)\\
0 & {\rm otherwise,}
\end{cases}
\eeq
where $\bar\cB(0,\rho)$ is the (closed) Euclidean ball in $\bbR^d$ with radius $\rho$ and centered at the origin.
Finally, we define for all $\mu\in \cM_1(\bbR^d\times\bbR^d)$, $n\in\bbN$ and $\rho\ge 0$:
\beq
\label{eq:def-phi-n-rho}
\phi_{n,\gep,\rho}(\mu)
=\int  \dd z
\Big(1 - \exp\Big\{-\frac1\gep \int \mu(\dd x, \dd y) n^{1-\frac2d} \bar q^\rho_{\ell}(x,y;z)\Big\}\Big)
\eeq
and
\beq
\label{eq:def-phi-infty-rho}
\phi_{\infty,\gep,\rho}(\mu)
=\int  \dd z
\Big(1 - \exp\Big\{-\frac{\gk_d}{\gep} \int \mu(\dd x, \dd y) \varphi_\gep^\rho(x-z,y-z)\Big\}\Big),
\eeq
with the conventions that $\phi_{\infty,\gep,0}= \phi_{\infty,\gep}$.\\
\par We now come to the proof of Proposition~\ref{pr:app-cond-range}.
We proceed in several steps.\\
{\it Step 1. Removing the logarithm.} By~\eqref{eq:exp-cond-range}, we get
\beq
\tfrac1n \bE(R_n|\hat S) = \boldsymbol\Phi_{n,\gep,0}(L_{M,\gep}^{(2)}),
\eeq
where 
\beq
\boldsymbol\Phi_{n,\gep,0}(\mu) := \int_{\bbR^d} \dd z
\Big[
1- \exp\Big(- \int \frak{q}_{\gep,n}(x,y;z) \mu(\dd x, \dd y)\Big)
\Big].
\eeq
Recall the definition in~\eqref{eq:def-phi-n-rho}.
Since $\log(1+x)\le x$ for all $x>-1$,
\beq
\boldsymbol\Phi_{n,\gep,0}(L_{M,\gep}^{(2)}) \ge \phi_{n,\gep,0}(L_{M,\gep}^{(2)}).
\eeq
Finally, we get that 
\beq
\label{eq:LBCR1}
\tfrac1n \bE(R_n|\hat S) \ge \phi_{n,\gep,0}(L_{M,\gep}^{(2)})\,.
\eeq
In the following steps we approximate $\phi_{n,\gep,0}(L_{M,\gep}^{(2)})$ by
$\phi_{\infty,\gep,0}(L_{M,\gep}^{(2)})$.
\\

{\it Step 2. Truncation of the range and decomposition of the error term.} 
For any $K>0$ (whose precise value will be determined later), we have the trivial bound
\beq
\phi_{n,\gep,0}(\mu) \ge \phi_{n,\gep,Kn^{-1/d}}(\mu).
\eeq
Therefore, we may write on the event $L_{M,\gep}^{(2)}\in\cM(A,\gep_0)$:
\beq
\label{eq:LBCR2}
\phi_{n,\gep,0}(L_{M,\gep}^{(2)}) \ge 
\phi_{\infty,\gep,0}(L_{M,\gep}^{(2)}) - (\cE_1 + \cE_2)(n,K),
\eeq
where
\beq
\ba
&\cE_1(n,K) := \sup_{\mu\in\cM(A,\gep_0)} [\phi_{\infty,\gep,0}(\mu) - \phi_{\infty,\gep,Kn^{-1/d}}(\mu)],\\
&\cE_2(n,K) := \sup_{\mu\in\cM(A,\gep_0)} [\phi_{\infty,\gep,Kn^{-1/d}}(\mu) - \phi_{n,\gep,Kn^{-1/d}}(\mu)].
\ea
\eeq
Note that there is no need for absolute values inside the supremum since we only aim at a lower bound. Recall that $\gep,\gep_0$ and $A$ are fixed. We deal with the error terms $\cE_1(n,K)$ and $\cE_2(n,K)$ in Steps 3 and 4, respectively. 
\\

{\it Step 3. Control on $\cE_1(n,K)$.} We will prove that for any $K>0$,
\beq
\label{eq:LBCR3}
\limsup_{n\to\infty} \cE_1(n,K) \le 2\gk\gep_0.
\eeq
Therefore, throughout this proof step, the value of $K$ is fixed (the precise value will be decided in Step 4 below).
For all $\mu\in\cM_1(\bbR^d\times \bbR^d)$, we have
\beq
\phi_{\infty,\gep,0}(\mu) - \phi_{\infty,\gep,Kn^{-1/d}}(\mu)
\le \int\mu(\dd x, \dd y) \frac{\gk}{\gep}
\int_{\cB(x,Kn^{-\frac1d})\cup\cB(y,Kn^{-\frac1d})}
\varphi_\gep(x-z,y-z)\dd z.  
\eeq
Recalling Lemma~\ref{lem:phi.q.prob}, we obtain that for all $\mu\in\cM(A,\gep_0)$,
\beq
\phi_{\infty,\gep,0}(\mu) - \phi_{\infty,\gep,Kn^{-1/d}}(\mu)
\le \gk\gep_0 + \cE_{1,1}(n,K,\mu) + \cE_{1,2}(n,K,\mu),
\eeq
where
\beq
\cE_{1,1}(n,K,\mu) :=
\int_{|x-y|\le A}\mu(\dd x, \dd y) \frac{\gk}{\gep}
\int_{\cB(x,Kn^{-\frac1d})}
\varphi_\gep(x-z,y-z)\dd z
\eeq
and $\cE_{1,2}(n,K,\mu)$ is defined in the same way, with $\cB(x,Kn^{-\frac1d})$ replaced by $\cB(y,Kn^{-\frac1d})$. Since $x$ and $y$ play symmetric roles, it is enough to deal with $\cE_{1,1}(n,K,\mu)$. Recalling~\eqref{def:varphi}, we see that
\beq
\ba
\cE_{1,1}(n,K,\mu) &=
\int_{|x-y|\le A}\mu(\dd x, \dd y) \frac{\gk}{\gep}
\int_{\cB(x,Kn^{-\frac1d})} \dd z
\int_0^\gep \dd s \frac{p_{s/d}(x-z)p_{(\gep-s)/d}(z-y)}{p_{\gep/d}(x-y)}\\
&=
\gk\int_{|x-y|\le A}\mu(\dd x, \dd y)
\int_{\cB(x,Kn^{-\frac1d})} \dd z
\int_0^1 \dd s \frac{p_{\gep s/d}(x-z)p_{\gep(1-s)/d}(z-y)}{p_{\gep/d}(x-y)}
\ea
\eeq
By using~\eqref{eq:phi.q.prob}, we get that for all $\mu\in\cM(A,\gep_0)$,
\beq
\cE_{1,1}(n,K,\mu) \le \gk(\gep_0 + \cE_{1,1}(n,K,\mu,\gep_0)),
\eeq
where
\beq
\cE_{1,1}(n,K,\mu,\gep_0) := \int_{|x-y|\le A}\mu(\dd x, \dd y)
\int_{\cB(x,Kn^{-\frac1d})} \dd z
\int_0^{1-\gep_0} \dd s \frac{p_{\gep s/d}(x-z)p_{\gep(1-s)/d}(z-y)}{p_{\gep/d}(x-y)}.
\eeq
Using the expression of the Brownian kernel, there exists $C=C(A,\gep_0,\gep)$ such that
\beq
\suptwo{s\le 1-\gep_0,\ z\in\bbR^d}{|x-y|\le A} \frac{p_{\gep(1-s)/d}(z-y)}{p_{\gep/d}(x-y)} \le C.
\eeq
Therefore,
\beq
\ba
\cE_{1,1}(n,K,\mu,\gep_0) &\le C \int_{|x-y|\le A}\mu(\dd x, \dd y) \int_0^{1-\gep_0}  \bP(|B_{\gep s/d}| \le Kn^{-\frac1d})\dd s\\
&\le C \int_0^1 \bP(|B_{\gep s/d}| \le Kn^{-\frac1d})\dd s,
\ea
\eeq
which converges to $0$ as $n\to \infty$, by dominated convergence. This completes the proof of Step 3.\\

{\it Step 4. Control on $\cE_2(n,K)$.} We will prove that there exists $K>0$ (large enough) such that
\beq
\label{eq:LBCR4}
\limsup_{n\to \infty} \cE_2(n,K) \le \gep_0.
\eeq
For convenience, we define
\beq
\cE_2(n,K,\mu) := \phi_{\infty,\gep,Kn^{-1/d}}(\mu) - \phi_{n,\gep,Kn^{-1/d}}(\mu)
\eeq
so that
\beq
\cE_2(n,K) = \sup_{\mu\in\cM(A,\gep_0)} \cE_2(n,K,\mu).
\eeq
This step of the proof is more involved, so we divide it into several smaller steps.\\

{\it Step 4a. Approximation of the escape probability.} Let us further estimate $\cE_2(n,K,\mu)$ as follows:
\beq
\cE_2(n,K,\mu) \le \cE_{2,1}(n,K,\mu) + \cE_{2,2}(n,K,\mu),
\eeq
where
\beq
\ba
&\cE_{2,1}(n,K,\mu) := \phi_{\infty,\gep,Kn^{-1/d}}(\mu) - \phi^{(K)}_{\infty,\gep,Kn^{-1/d}}(\mu),\\
&\cE_{2,2}(n,K,\mu) := \phi^{(K)}_{\infty,\gep,Kn^{-1/d}}(\mu) - \phi_{n,\gep,Kn^{-1/d}}(\mu),
\ea
\eeq
and
\beq
\label{eq:def-phi-infty-rho-K}
\phi^{(K)}_{\infty,\gep,\rho}(\mu)
=\int  \dd z
\Big(1 - \exp\Big\{-\frac{\bP(H_0> H_{\partial B_K})}{\gep} \int \mu(\dd x, \dd y) \varphi_\gep^\rho(x-z,y-z)\Big\}\Big)
\eeq
(compare with~\eqref{eq:def-phi-infty-rho}). We used the following notations for hitting times in the formula above:
\beq
H_{\gL} := \inf\{n\ge 1 \colon S_n\in \gL\}, \qquad \gL\subset \bbZ^d,
\qquad H_{a} = H_{\{a\}}, \quad a\in\bbZ.
\eeq
By Lemma~\ref{lem:phi.q.prob}, we obtain
\beq
\suptwo{\mu\in\cM(A,\gep_0)}{n\ge 1} \cE_{2,1}(n,K,\mu) \le |\gk - \bP(H_0> H_{\partial B_K})| = |\bP(H_0 = \infty) - \bP(H_0> H_{\partial B_K})|\stackrel{K\to\infty}{\longrightarrow} 0.
\eeq
Therefore, there exists $K_1(\gep_0)$ such that for all $K\ge K_1(\gep_0)$,
\beq
\label{eq:cE21}
\suptwo{\mu\in\cM(A,\gep_0)}{n\ge 1} \cE_{2,1}(n,K,\mu) \le \gep_0.
\eeq
It now remains to control $\cE_{2,2}(n,K,\mu)$ uniformly in 
$\mu\in\cM(A,\gep_0)$, as $n\to\infty$, which we will do in the following steps.\\

{\it Step 4b. Truncation of the time interval}. We now proceed to a truncation of the time interval $[0,\gep]$ appearing in the definition of $\varphi_\gep$. This truncation allows us to obtain uniform bounds later in the proof. We split $\cE_{2,2}(n,K,\mu)$ as follows:
\beq
\cE_{2,2}(n,K,\mu) \le \cE_{2,3}(n,K,\mu) + \cE_{2,4}(n,K,\mu),
\eeq
where
\beq
\ba
&\cE_{2,3}(n,K,\mu) := \phi^{(K)}_{\infty,\gep, Kn^{-1/d}}(\mu) -\phi^{(K,\gep_0)}_{\infty,\gep, Kn^{-1/d}}(\mu),\\
&\cE_{2,4}(n,K,\mu) := \phi^{(K,\gep_0)}_{\infty,\gep, Kn^{-1/d}}(\mu) - \phi^{(\gep_0)}_{n,\gep, Kn^{-1/d}}(\mu),\\
\ea
\eeq
with
\beq
\ba
\label{eq:def-phi-infty-rho-K-gep0}
&\phi^{(\gep_0)}_{n,\gep, \rho}(\mu) := \int \dd z\Big(1 - \exp\Big\{-\frac1\gep \int \mu(\dd x, \dd y) n^{1-\frac2d} \bar q^\rho_{\ell,\gep_0}(x,y;z)\Big\}\Big)\\
&\phi^{(K,\gep_0)}_{\infty,\gep,\rho}(\mu)
:=\int  \dd z
\Big(1 - \exp\Big\{-\frac{\bP(H_0> H_{\partial B_K})}{\gep} \int \mu(\dd x, \dd y) \varphi_{\gep,\gep_0}^\rho(x-z,y-z)\Big\}\Big)
\ea
\eeq
(compare with~\eqref{eq:def-phi-infty-rho-K}) and
\beq\label{def:varphi-gep0}
\ba
\varphi_{\gep,\gep_0}(x,y) &:= \int_{\gep_0\gep}^{(1-\gep_0)\gep} \dd s \frac{p_{s/d}(-x)p_{(\gep-s)/d}(y)}{p_{\gep/d}(y-x)},\\
\bar q_{\ell,\gep_0}(x,y;z) &:= \bP_{\lfloor xn^{\frac1d}\rfloor}(H_{\lfloor zn^{\frac1d}\rfloor} \in [\gep_0\ell,(1-\gep_0)\ell] | S_\ell = \lfloor yn^{\frac1d}\rfloor).
\ea
\eeq
The super-index $\rho$ in~\eqref{eq:def-phi-infty-rho-K-gep0} means that the corresponding function is zero as soon as one of its arguments is in the ball of radius $\rho$.
By using~\eqref{eq:phi.q.prob}, we readily get
\beq
\suptwo{\mu\in\cM_1(\bbR^d\times\bbR^d)}{n,K\ge 1} \cE_{2,3}(n,K,\mu) \le 2\gep_0.
\eeq
It remains to deal with the term $\cE_{2,4}(n,K,\mu)$. Note that we may safely restrict the integrals over $x$ and $y$ to $|x-y|\le A$ in~\eqref{def:varphi-gep0} (up to some error term not larger than $\gep_0$) since $\mu\in\cM(A,\gep_0)$ (same argument as in Step 3).\\

{\it Step 4c. Decomposition of $\phi^{(\gep_0)}_{n,\gep, Kn^{-1/d}}(\mu)$.}  To lighten notations, we introduce
\beq
x_n := \lfloor xn^{\frac1d} \rfloor, \qquad
y_n := \lfloor yn^{\frac1d} \rfloor, \qquad
z_n := \lfloor zn^{\frac1d} \rfloor,
\eeq
and
\beq
\label{eq:def-uv}
u:= x_n - z_n, \qquad v := y_n- z_n.
\eeq
We may thus write
\beq
\bar q_{\ell,\gep_0}(x,y;z) = \frac{Q_{\ell}(u,v)}{\bP_u(S_\ell = v)},
\qquad Q_{\ell}(u,v):= \bP_u(H_0\in[\gep_0,1-\gep_0]\ell, S_{\ell}= v).
\eeq
We now proceed with several approximations of $Q_{\ell}(u,v)$. First, by decomposing on the value of $H_0$ and reversing time on the interval $[0,H_0]$, we obtain
\beq
Q_{\ell}(u,v) = \sum_{i \in [\gep_0,1-\gep_0]\ell} \bP_0(S_i = u, H_0>i) \bP_0(S_{\ell - i}= v).
\eeq
Recall that $|u|\ge K$. By decomposing on the value of $H_{\partial B_K}$ and the position of the walk at that time, we obtain
\beq
Q_{\ell}(u,v) = \sum_{i \in [\gep_0,1-\gep_0]\ell} \sumtwo{1\le j \le i}{w\in \partial B_K} \bP_0(S_j=w, H_{\partial B_K}=j < H_0)\bP_w(S_{i-j} = u, H_0>i-j) \bP_0(S_{\ell - i}= v),
\eeq
For any $a\in\bbN$ with $a\leq \gep_0\ell$ (to be determined later in the proof), we have $Q_{\ell}(u,v) \ge Q_{\ell,a}(u,v)$, where
\beq
Q_{\ell,a}(u,v) := \sum_{i \in [\gep_0,1-\gep_0]\ell} \sumtwo{1\le j \le a}{w\in \partial B_K} \bP_0(S_j=w, H_{\partial B_K}=j < H_0)\bP_w(S_{i-j} = u, H_0>i-j) \bP_0(S_{\ell - i}= v).
\eeq
We decompose the latter as
\beq
Q_{\ell,a}(u,v) = Q_{\ell,a}^{(1)}(u,v) - {\frak E}_{\ell,a}^{(1)}(u,v),
\eeq
where 
\beq
Q_{\ell,a}^{(1)}(u,v) := \sum_{i \in [\gep_0,1-\gep_0]\ell} \sumtwo{1\le j \le a}{w\in \partial B_K} \bP_0(S_j=w, H_{\partial B_K}=j < H_0)\bP_w(S_{i-j} = u) \bP_0(S_{\ell - i}= v),
\eeq
and
\beq
{\frak E}_{\ell,a}^{(1)}(u,v) := \sum_{i \in [\gep_0,1-\gep_0]\ell} \sumtwo{1\le j \le a}{w\in \partial B_K} \bP_0(S_j=w, H_{\partial B_K}=j < H_0)\bP_w(S_{i-j} = u, H_0 \le i-j) \bP_0(S_{\ell - i}= v)
\eeq
Note that so far we have ignored the fact that $P_0(S_m=q)$ is non-zero only if $m$ and $q$ have the same parity. We write $m\lra q$ in that case.

We now bound $Q_{\ell,a}^{(1)}(u,v)$ from below as follows:
\beq
Q_{\ell,a}^{(1)}(u,v) \ge Q_{\ell,a}^{(2)}(u,v) - {\frak E}_{\ell,a}^{(2)}(u,v)
\eeq
where
\beq
Q_{\ell,a}^{(2)}(u,v) := \bP_0(H_{\partial B_K}< H_0)\sumtwo{i \in [\gep_0,1-\gep_0]\ell}{i-\ell\lra v}
 \infthree{w\in \partial B_K}{1\le j\le a}{i-j\lra u-w}
\bP_w(S_{i-j} = u) \bP_0(S_{\ell - i}= v)
\eeq
and
\beq
{\frak E}_{\ell,a}^{(2)}(u,v) := \bP_0(H_{\partial B_K}\ge a)
\sumtwo{i \in [\gep_0,1-\gep_0]\ell}{i-\ell\lra v}
 \infthree{w\in \partial B_K}{1\le j\le a}{i-j\lra u-w}
\bP_w(S_{i-j} = u) \bP_0(S_{\ell - i}= v).
\eeq
The term $Q_{\ell,a}^{(2)}(u,v)$ is the term that will give the main contribution to $\cE_{2,4}(n,K,\mu)$. We will first control the terms ${\frak E}_{\ell,a}^{(1)}(u,v)$ and ${\frak E}_{\ell,a}^{(2)}(u,v)$ and then come back to $Q_{\ell,a}^{(2)}(u,v)$.\\

{\it Step 4d. Control of ${\frak E}_{\ell,a}^{(1)}(u,v)$.} Let us now deal with the error term ${\frak E}_{\ell,a}^{(1)}(u,v)$. We use the following lemma, the proof of which is postponed to the end of the section
\begin{lemma}
\label{lem:return-origin}
We have
\beq
\suptwo{|u|\wedge|w|\ge K}{k\ge 1} k^{\frac d2}\bP_w(S_k = u, H_0 \le k) \le {\rm (cst)} K^{2-d}.
\eeq
\end{lemma}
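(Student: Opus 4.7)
The plan is to bound $\bP_w(S_k=u, H_0\le k)$ by decomposing on the first visit to the origin and splitting the resulting sum according to whether the hitting happens in the first or second half of the time interval. By the strong Markov property,
\beq
\bP_w(S_k=u,H_0\le k)=\sum_{j=1}^k \bP_w(H_0=j)\,\bP_0(S_{k-j}=u),
\eeq
and the idea is to extract the factor $k^{-d/2}$ from a local CLT bound on one side of the decomposition and the factor $K^{2-d}$ from a Green function bound on the other. I split the sum at $j=\lfloor k/2\rfloor$.

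For the low range $j\le k/2$ I will apply the uniform local CLT bound $\bP_0(S_{k-j}=u)\le C k^{-d/2}$ (an immediate consequence of Proposition~\ref{pr:LLT}), factor it out of the sum, and then use
\beq
\sum_{j\le k/2}\bP_w(H_0=j)\le \bP_w(H_0<\infty)=\frac{G(w,0)}{G(0,0)}\le C|w|^{2-d}\le C K^{2-d},
\eeq
where $G$ denotes the Green function of the walk, whose standard transient estimate $G(x,0)\le C|x|^{2-d}$ (for $d\ge 3$) is used here. For the high range $j>k/2$ I will instead use that $\{H_0=j\}\subseteq\{S_j=0\}$, together with $\bP_w(S_j=0)=\bP_0(S_j=-w)\le C j^{-d/2}\le C k^{-d/2}$, and then sum the remaining factor by a change of variable $l=k-j$:
\beq
\sum_{j>k/2}^{k}\bP_0(S_{k-j}=u)=\sum_{l=0}^{\lfloor k/2\rfloor}\bP_0(S_l=u)\le G(0,u)\le C|u|^{2-d}\le C K^{2-d}.
\eeq
Both ranges therefore contribute at most $C K^{2-d} k^{-d/2}$, and multiplying through by $k^{d/2}$ yields the claim uniformly in $k\ge 1$ and in $u,w$ with $|u|\wedge|w|\ge K$.

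There is no substantive obstacle in this lemma; it is a standard random walk computation, and the only point requiring a little care is to set up the decomposition so that the $K^{2-d}$ factor is extracted from whichever endpoint of the trajectory (start or end) is kept far from the origin on its corresponding segment: for small $j$ the walk spends a long time near $u$, so the Green function controls the segment from $w$ to $0$; for large $j$ the roles are reversed and the Green function controls the segment from $0$ to $u$. The local CLT always controls the long complementary segment and produces the diffusive factor $k^{-d/2}$.
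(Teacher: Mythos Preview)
Your proof is correct and follows essentially the same approach as the paper: decompose on the hitting time $H_0$, split the sum at $k/2$, and on each half extract the factor $k^{-d/2}$ from the local limit theorem applied to the long segment while the short segment contributes the Green function bound $K^{2-d}$. The only cosmetic difference is that the paper first bounds $\bP_w(H_0=r)\le \bP_0(S_r=w)$ and then sums to get the expected local time $\bE_0[\ell_\infty(w)]$, whereas you sum $\bP_w(H_0=j)$ directly to get the hitting probability $\bP_w(H_0<\infty)=G(w,0)/G(0,0)$; both are the same Green function estimate.
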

By Lemma~\ref{lem:return-origin}, the contribution from ${\frak E}_{\ell,a}^{(1)}(u,v)$ is at most:
\beq
\label{eq:step5d}
\ba
&\int_{|x-y|\le A}  \mu(\dd x, \dd y) \int \dd z \frac 1\gep n^{1-2/d} \frac{{\frak E}_{\ell,a}^{(1)}(u,v)}{\bP_u(S_{\ell}=v)}\\
&\le {\rm (cst)} K^{2-d} \int_{|x-y|\le A} \mu(\dd x, \dd y) \frac{1}{\gep n^{\frac2d}} \int n \dd z  \sumtwo{i \in [\gep_0,1-\gep_0]\ell}{i-\ell\lra v} (i-a)^{-\frac d2}\frac{\bP_0(S_{\ell-i} = v)}{\bP_u(S_{\ell}=v)}
\ea
\eeq
By using the local limit theorem (Proposition~\ref{pr:LLT}) on $\bP_u(S_{\ell}=v)$ and noticing that 
\beq
\int n \dd z \bP_0(S_{\ell-i} = v) = \sum_{z\in\bbZ^d} \bP_0(S_{\ell-i} = y_n - z) = 1,
\eeq
we obtain
\beq
\eqref{eq:step5d} \le C(A,\gep_0,\gep)\Big(\frac{\ell}{\gep_0\ell - a}\Big)^{d/2} K^{2-d} \le C(A,\gep_0,\gep)\Big(\frac2 {\gep_0}\Big)^{d/2} K^{2-d},
\eeq
provided $n$ is larger than some $n_1(\gep_0,\gep, a)$. We may now choose $K\ge K_2(\gep_0,\gep,A)$ such that
\beq
\eqref{eq:step5d} \le \gep_0.
\eeq
In the following we set $K\ge K_2(\gep_0,\gep,A) \vee K_1(\gep_0)$ (recall~\eqref{eq:cE21}) once and for all.\\

{\it Step 4e. Control of ${\frak E}_{\ell,a}^{(2)}(u,v)$.} Let $w\in\partial B_K$ and $1\le j \le a$. Similarly to Step 4d, the total contribution from ${\frak E}_{\ell,a}^{(2)}(u,v)$ is at most
\beq
\label{eq:step5e}
\bP_0(H_{\partial B_K}\ge a) \int_{|x-y|\le A}\mu(\dd x,\dd y) \int \dd z \frac{1}{\gep} n^{1-\frac2d} \sumtwo{i \in [\gep_0,1-\gep_0]\ell}{i-\ell\lra v} \bP_w(S_{i-j}=u)\frac{\bP_0(S_{\ell-i}=v)}{\bP_u(S_\ell = v)}.
\eeq
By the local limit theorem (Proposition~\ref{pr:LLT}), there exists $C(A,\gep_0,\gep)$ such that
\beq
\frac{\bP_0(S_{\ell-i}=v)}{\bP_u(S_{\ell}=v)} \le C(A,\gep_0,\gep),
\eeq
uniformly in $i\le(1-\gep_0)\ell$, $|x-y|\le A$ and $z\in\bbR^d$. Therefore, the quantity in~\eqref{eq:step5e} is bounded from above by
\beq
C(A,\gep_0,\gep) \bP_0(H_{\partial B_K}\ge a) \int \mu(\dd x, \dd y)  \frac{1}{\gep n^{\frac2d}} \sumtwo{i \in [\gep_0,1-\gep_0]\ell}{i-\ell\lra v} \int n \dd z \bP_w(S_{i-j}=u).
\eeq
Since
\beq
\int n \dd z \bP_w(S_{i-j}=u) = \sum_{z\in\bbZ^d}\bP_w(S_{i-j}=x_n -z) = 1,
\eeq
we get that
\beq
\text{\eqref{eq:step5e}} \le C(A,\gep_0,\gep) \bP_0(H_{\partial B_K}\ge a).
\eeq
Since $A,\gep_0,\gep$ and $K$ are fixed, we may now pick $a$ large enough such that
\beq
\text{\eqref{eq:step5e}} \le \gep_0.
\eeq

{\it Step 4f. Control of $\cE_{2,4}(n,K,\mu)$.} We define
\beq
\tilde Q_{\ell,a}^{(2)}(u,v) := \frac{Q_{\ell,a}^{(2)}(u,v)}{\bP_0(H_{\partial B_K}< H_0)}.
\eeq
By the local limit theorem (Proposition~\ref{pr:LLT}), we get for all $i\lra \ell-v$,
\beq
\sup_v |P_0(S_{\ell-i}= v) - 2p_{(\ell-i)/d}(v)| \le \frac{{\rm (cst)}}{(\ell-i)^{\frac d2 + 1}} \le \frac{c(\gep_0)}{\ell^{\frac d2 + 1}}.
\eeq
We may thus write 
\beq
\tilde Q_{\ell,a}^{(2)}(u,v) = \tilde Q_{\ell,a}^{(3)}(u,v) - {\frak E}_{\ell,a}^{(3)}(u,v),
\eeq
with
\beq
\tilde Q_{\ell,a}^{(3)}(u,v) := 2\sum_{\substack{i \in [\gep_0,1-\gep_0]\ell\,\\ \ell -i \leftrightarrow v}}
\infthree{w\in \partial B_K}{1\le j\le a}{i-j\lra u-w}
\bP_w(S_{i-j} = u) p_{(\ell-i)/d}(v).
\eeq
and
\beq
{\frak E}_{\ell,a}^{(3)}(u,v) \le c(\gep_0) \frac 1\ell \sum_{\substack{i \in [\gep_0,1-\gep_0]\ell\,\\ \ell -i \leftrightarrow v}}
\infthree{w\in \partial B_K}{1\le j\le a}{i-j\lra u-w}
\bP_w(S_{i-j} = u) \ell^{-d/2}.
\eeq
The contribution of this error term is dealt with by using the same line of arguments as in Steps 4d and 4e, uniformly in $\mu\in\cM(A,\gep_0)$. Again, by the local limit theorem (Proposition~\ref{pr:LLT}), we may write
\beq
\tilde Q_{\ell,a}^{(3)}(u,v) = \tilde Q_{\ell,a}^{(4)}(u,v) - {\frak E}_{\ell,a}^{(4)}(u,v)
\eeq
with
\beq
\tilde Q_{\ell,a}^{(4)}(u,v) := 4\sum_{\substack{i \in [\gep_0,1-\gep_0]\ell\,\\ \ell -i \leftrightarrow v}}
\infthree{w\in \partial B_K}{1\le j\le a}{i-j\lra u-w}
p_{(i-j)/d}(u-w) p_{(\ell-i)/d}(v).
\eeq
and
\beq
{\frak E}_{\ell,a}^{(4)}(u,v) \le c(\gep_0) \frac 1\ell \sum_{\substack{i \in [\gep_0,1-\gep_0]\ell\,\\ \ell -i \leftrightarrow v}} p_{(\ell - i)/d}(v) \ell^{-d/2}.
\eeq
To control the contribution from this error term, we use that
\beq
\int_{\bbR^d} n\dd z\ p_{k}(v) = \sum_{z\in\bbZ^d} p_k(z) \stackrel{k\to\infty}{\longrightarrow} 1.
\eeq
We now want to replace $p_{(i-j)/d}(u-w)$ by $p_{i/d}(u)$. First we replace $p_{(i-j)/d}(u-w)$ by $p_{(i-j)/d}(u)$. To this end, note that
\beq
|p_k(u) - p_k(u-w)| \le {\rm (cst)}\frac{|w|}{k^{\frac {1+d}2}}.
\eeq
We define
\beq
\tilde Q_{\ell,a}^{(4)}(u,v) = \tilde Q_{\ell,a}^{(5)}(u,v) - {\frak E}_{\ell,a}^{(5)}(u,v)
\eeq
with
\beq
\tilde Q_{\ell,a}^{(5)}(u,v) := 4\sum_{\substack{i \in [\gep_0,1-\gep_0]\ell\,\\ \ell -i \leftrightarrow v}}
\inftwo{1\le j\le a}{i-j \lra u}
p_{(i-j)/d}(u) p_{(\ell-i)/d}(v).
\eeq
and
\beq
{\frak E}_{\ell,a}^{(5)}(u,v) \le \frac{|K|}{\sqrt{\gep_0\ell -a}} \sum_{\substack{i \in [\gep_0,1-\gep_0]\ell\,\\ \ell -i \leftrightarrow v}} (i-a)^{-d/2} p_{(\ell - i)/d}(v).
\eeq
We can again deal with ${\frak E}_{\ell,a}^{(5)}(u,v)$ in a similar way as for the previous error terms. We now replace $p_{(i-j)/d}(u)$ by $p_{i/d}(u)$ in $\tilde Q_{\ell,a}^{(5)}(u,v)$. By computing the derivative of $f(s) := s^{-d/2} \exp(-|u|^2/(2s))$, we see that for all $\tilde A>0$:
\beq
\suptwo{1\le j \le a}{|u|\le \tilde An^{1/d}} |p_{i/d}(u) - p_{(i-j)/d}(u)|\le
 \frac{C(\gep,\gep_0,a,\tilde A)}{\ell} p_{i/d}(u).
\eeq
Note that the restriction of the integral over $z\in\bbR^d$ to $|u| \le \tilde An^{1/d}$ (that is $z\in\cB(x,\tilde A)$) can be made up to an error term not larger than $\gep_0$ provided $\tilde A$ is chosen large enough (uniformly in $\mu$, same argument as in Step 4).
Therefore,
\beq
\tilde Q_{\ell,a}^{(5)}(u,v) = \tilde Q_{\ell,a}^{(6)}(u,v) - {\frak E}_{\ell,a}^{(6)}(u,v)
\eeq
with
\beq
\tilde Q_{\ell,a}^{(6)}(u,v) := 4\sum_{\substack{i \in [\gep_0,1-\gep_0]\ell\,\\ \ell -i \leftrightarrow v}}p_{i/d}(u) p_{(\ell-i)/d}(v).
\eeq
and
\beq
{\frak E}_{\ell,a}^{(6)}(u,v) \le  \frac{C(\gep,\gep_0,a,\tilde A)}{\ell} \sum_{\substack{i \in [\gep_0,1-\gep_0]\ell\,\\ \ell -i \leftrightarrow v}}p_{i/d}(u) p_{(\ell - i)/d}(v).
\eeq
We are now left with showing that
\beq
4\int_{\cB(x,\tilde A)\cup \cB(y,\tilde A)} \dd z\ \frac1\gep n^{1-\frac2d} \sum_{\substack{i \in [\gep_0,1-\gep_0]\ell\,\\ \ell -i \leftrightarrow v}} \frac{p_{i/d}(u)p_{(\ell-i)/d}(v)}{\bP_u(S_\ell=v)}
\eeq
converges to
\beq
\int_{\cB(x,\tilde A)\cup \cB(y,\tilde A)} \dd z\ \frac1\gep \int_{\gep_0\gep}^{(1-\gep_0)\gep}\dd s \frac{p_{s/d}(x-z)p_{(\gep-s)/d}(z-y)}{p_{\gep/d}(x-y)},
\eeq
as $n\to\infty$, uniformly in $|x-y|\le A$, by a Riemann sum approximation. By the local limit theorem (Proposition~\ref{pr:LLT}),
\beq
\sup_{|x-y|\le A}
\Big|
\frac{2p_{\ell/d}(v-u)}{\bP_u(S_{\ell}=v)}
-1
\Big|
\le \frac{\rm (cst)}{\ell}
\eeq
so the proof will be complete once we prove that
\beq
2n^{1-\frac2d} \sum_{\substack{i \in [\gep_0,1-\gep_0]\ell\,\\ \ell -i \leftrightarrow v}} \frac{p_{i/d}(u)p_{(\ell-i)/d}(v)}{p_{\ell/d}(v-u)}
\longrightarrow
\int_{\gep_0\gep}^{(1-\gep_0)\gep}\dd s \frac{p_{s/d}(x-z)p_{(\gep-s)/d}(z-y)}{p_{\gep/d}(x-y)}
\eeq
as $n\to\infty$, uniformly on $z\in \cB(x,\tilde A)\cup \cB(y,\tilde A)$ and $|x-y|\le A$. Clearly,
\beq
n^{1-\frac2d}\sum_{\substack{i \in [\gep_0,1-\gep_0]\ell\,\\ \ell -i \leftrightarrow v}} \frac{p_{i/d}(u)p_{(\ell-i)/d}(v)}{p_{\ell/d}(v-u)}
=\sum_{\substack{i \in [\gep_0,1-\gep_0]\frac{\gep}{n^{2/d}}\,,\\ \ell - i\leftrightarrow v}} \frac{p_{in^{-2/d}/d}(\frac{u}{n^{-1/d}})p_{(\ell-i)n^{-2/d}/d}(\frac{v}{n^{-1/d}})}{p_{\gep n^{-2/d}/d}(\frac{v-u}{n^{-1/d}})} \frac{1}{n^{2/d}}
\eeq
and the rest is standard Riemann sum approximation together with the fact that the condition $\ell - i\leftrightarrow v$ reduces the number of terms by a factor $\tfrac12$ (recall~\eqref{eq:def-uv}).
\\

{\it Step 5. Conclusion.} By combining~\eqref{eq:LBCR1}, \eqref{eq:LBCR2}, \eqref{eq:LBCR3} and~\eqref{eq:LBCR4}, the proof is complete.

\begin{proof}[Proof of Lemma~\ref{lem:return-origin}]
By decomposing on the value of $H_0$ we obtain:
\beq
\ba
\bP_w(S_k=u, H_0 \le k) &=
\sum_{1\le r\le k} \bP_w(H_0=r)\bP_0(S_{k-r}=u)\\
&\le \sum_{1\le r \le k}\bP_0(S_r=w)\bP_0(S_{k-r}=u).
\ea
\eeq
By splitting the sum in two parts ($r\le k/2$ and $r>k/2$) and using the local limit theorem (Proposition~\ref{pr:LLT}) we get
\beq
\bP_w(S_k=u, H_0 \le k) \le {\rm (cst)} k^{-d/2}(\bE_0[\ell_\infty(w)]+\bE_0[\ell_\infty(u)]),
\eeq
where $\ell_\infty(\cdot):= \card\{i\ge 1\colon S_i = \cdot\}$. Since $|u|\wedge|w|\ge K$, a standard estimate yields
\beq
\bE_0[\ell_\infty(w)]\vee \bE_0[\ell_\infty(u)] \le {\rm (cst)} K^{2-d},
\eeq
which completes the proof.
\end{proof}

%%%%%% The bibliography %%%%%%%
\bibliographystyle{abbrv}
\bibliography{SwissCheese}
%\begin{thebibliography}{99}
%\end{thebibliography}
%%%%%%%%%%%%%%%%%%%%%%

\end{document}